\newcommand{\dis}{\displaystyle}
\newcommand{\noi}{\noindent}
\newcommand{\halmos}{\rule{1ex}{1.4ex}}
\newcommand{\QED}{\nopagebreak{\hspace*{\fill}$\halmos$\medskip}}
\newcommand{\med}{\medskip}
\newcommand{\quand}{\quad\mbox{and}\quad}
\newtheoremstyle{mythm}
  {}
  {}
  {\itshape}
  {}
  {\bfseries}
  {}
  {.5em}
  {#1 #2 \thmnote{(#3)}}
\theoremstyle{mythm}
\newtheorem{theorem}{Theorem}
\newtheorem{proposition}[theorem]{Proposition}
\newtheorem{lemma}[theorem]{Lemma}
\newtheorem{exercise}[theorem]{Exercise}
\newtheorem{corollary}[theorem]{Corollary}
\newtheorem{conjecture}[theorem]{Conjecture}
\newtheorem{counterex}[theorem]{Counterexample}
\newtheorem{definition}[theorem]{Definition}
\newcommand{\bt}{\begin{theorem}}
\newcommand{\et}{\end{theorem}}
\newcommand{\bl}{\begin{lemma}}
\newcommand{\el}{\end{lemma}}
\newcommand{\bp}{\begin{proposition}}
\newcommand{\ep}{\end{proposition}}
\newcommand{\bcor}{\begin{corollary}}
\newcommand{\ecor}{\end{corollary}}
\newcommand{\br}{\begin{remark}\rm}
\newcommand{\er}{\end{remark}}
\newcommand{\bcon}{\begin{conjecture}}
\newcommand{\econ}{\end{conjecture}}
\newcommand{\bex}{\begin{exercise}}
\newcommand{\eex}{\end{exercise}}
\newcommand{\bcou}{\begin{counterex}}
\newcommand{\ecou}{\end{counterex}}
\newtheorem{remark}[theorem]{Remark}
\newenvironment{Proof}[1][]{\noi\textbf{Proof #1}}{\QED}
\newcommand{\bpro}{\begin{Proof}}
\newcommand{\epro}{\end{Proof}}
\newcommand{\be}{\begin{equation}}
\newcommand{\ee}{\end{equation}}
\newcommand{\ba}{\begin{array}}
\newcommand{\ea}{\end{array}}
\newcommand{\bac}{\begin{array}{r@{\,}c@{\,}l}}
\newcommand{\bc}{\be\begin{array}{r@{\,}c@{\,}l}}
\newcommand{\ec}{\end{array}\ee}
\newcommand{\al}{\alpha}
\newcommand{\ga}{\gamma}
\newcommand{\Ga}{\Gamma}
\newcommand{\de}{\delta}
\newcommand{\De}{\Delta}
\newcommand{\eps}{\varepsilon}
\newcommand{\tet}{\theta}
\newcommand{\om}{\omega}
\newcommand{\Om}{\Omega}
\newcommand{\si}{\ensuremath{\sigma}}
\newcommand{\Fi}{{\cal F}}
\newcommand{\Ki}{{\cal K}}
\newcommand{\Mi}{{\cal M}}
\newcommand{\Pc}{{\cal P}}
\newcommand{\Si}{{\cal S}}
\newcommand{\Ti}{{\cal T}}
\newcommand{\p}{\mathbb{P}}
\newcommand{\R}{{\mathbb R}}
\newcommand{\N}{{\mathbb N}}
\renewcommand{\S}{{\mathbb S}}
\newcommand{\T}{{\mathbb T}}
\newcommand{\U}{{\mathbb U}}
\newcommand{\E}{{\mathbb E}}
\newcommand{\F}{{\mathbb F}}
\newcommand{\I}{{\mathbb I}}
\renewcommand{\P}{{\mathbb P}}
\newcommand{\Ob}{{\mathbb O}}
\newcommand{\A}{{\mathbb A}}
\newcommand{\B}{{\mathbb B}}
\newcommand{\volgt}{\ensuremath{\Rightarrow}}
\newcommand{\up}{\uparrow}
\newcommand{\down}{\downarrow}
\newcommand{\sub}{\subset}
\newcommand{\beh}{\backslash}
\newcommand{\isd}{\stackrel{\scriptstyle{\rm d}}{=}}
\newcommand{\ti}{\tilde}
\newcommand{\ov}{\overline}
\newcommand{\un}{\underline}
\newcommand{\lvec}[1]{\accentset{\leftarrow}{#1}}
\newcommand{\pa}{\partial}
\newcommand{\ffrac}[2]{{\textstyle\frac{{#1}}{{#2}}}}
\newcommand{\nab}{\nabla}
\newcommand{\di}{\mathrm{d}}
\newcommand{\half}{{[0,\infty)}}
\newcommand{\ha}{\ffrac{1}{2}}
\newcommand{\ibf}{\mathbf{i}}
\newcommand{\jbf}{\mathbf{j}}
\newcommand{\kbf}{\mathbf{k}}
\newcommand{\wurz}{\varnothing}
\newcommand{\percol}[1]{\overset{{#1}}{\longrightarrow}}
\begin{document}

\makeatletter\@addtoreset{equation}{section}
\makeatother\def\theequation{\thesection.\arabic{equation}}

\renewcommand{\labelenumi}{{\rm (\roman{enumi})}}
\renewcommand{\theenumi}{\roman{enumi}}

\title{A phase transition between endogeny and nonendogeny}

\author{Bal\'azs R\'ath\footnote{Department of Stochastics, Institute of Mathematics,
Budapest University of Technology and Economics, MTA-BME Stochastics Research Group,
M\H{u}egyetem rkp. 3., H-1111 Budapest, Hungary. Alfr\'ed R\'enyi Institute of Mathematics, Re\'altanoda utca 13-15, 1053 Budapest, Hungary.
 rathb@math.bme.hu},
Jan~M.~Swart\footnote{The Czech Academy of Sciences,
  Institute of Information Theory and Automation,
  Pod vod\'arenskou v\v e\v z\' i 4,
  18200 Praha 8,
  Czech Republic.
  swart@utia.cas.cz},
and M\'arton Sz\H{o}ke\footnote{Department of Stochastics, Institute of Mathematics,
Budapest University of Technology and
M\H{u}egyetem rkp. 3., H-1111 Budapest, Hungary. Alfr\'ed R\'enyi Institute of Mathematics, Re\'altanoda utca 13-15, 1053 Budapest, Hungary.
 szokemarton3@gmail.com  }}

\date{\today}
\maketitle

\begin{abstract}\noi

The Marked Binary Branching Tree (MBBT) is the family tree of a rate one
binary branching process, on which points have been generated according to a
rate one Poisson point process, with i.i.d.\ uniformly distributed activation
times assigned to the points. In frozen percolation on the MBBT, initially,
all points are closed, but as time progresses points can become either frozen
or open. Points become open at their activation times provided they have not
become frozen before. Open points connect the parts of the tree below and
above it and one says that a point percolates if the tree above it is
infinite. We consider a version of frozen percolation on the MBBT in which at
times of the form $\tet^n$, all points that percolate are frozen. The limiting
model for $\tet\to 1$, in which points freeze as soon as they percolate, has
been studied before by R\'ath, Swart, and Terpai. We extend their results by
showing that there exists a $0<\tet^\ast<1$ such that the model is endogenous
for $\tet\leq\tet^\ast$ but not for $\tet>\tet^\ast$. This means that for
$\tet\leq\tet^\ast$, frozen percolation is a.s.\ determined by the MBBT but
for $\tet>\tet^\ast$ one needs additional randomness to describe it.
\end{abstract}
\vspace{.5cm}

\noi
{\it MSC 2010.} Primary: 82C27; Secondary: 60K35, 82C26, 60J80. \\
%
{\it Keywords:}
frozen percolation,
 recursive distributional equation,
 recursive tree process,
 endogeny.
 \\[10pt]
{\it Acknowledgements:} The authors would like to thank both anonymous referees for their numerous constructive comments that improved the
quality of this paper.
 The work of B.~R\'ath was partially supported by grant NKFI-FK-123962 of NKFI (National Research, Development and Innovation Office), the Bolyai Research Scholarship of the Hungarian Academy of Sciences, the \'UNKP-20-5-BME-5 New National Excellence Program of the Ministry for Innovation and Technology, and the ERC Synergy under Grant No. 810115 - DYNASNET. J.M.~Swart is supported by grant 20-08468S of the Czech Science Foundation (GA CR). The work of M\'arton Sz\H{o}ke is partially supported by the ERC Consolidator Grant 772466 ``NOISE''.

\newpage

{\setlength{\parskip}{-2pt}\tableofcontents}

\newpage

\section{Introduction and main results}

\subsection{Introduction}

The concept of frozen percolation was introduced by Aldous \cite{Ald00}. In
it, i.i.d.\ activation times that are uniformly distributed on $[0,1]$ are
assigned to the edges of an infinite, unoriented graph. Initially, all edges
are closed. At its activation time, an edge opens, provided it is not frozen.
Here, by definition, an edge freezes as soon as one of its endvertices becomes
part of an infinite open cluster. For general graphs, the existence of a
process satisfying this description is not obvious. Indeed, Benjamini and
Schramm observed that on the square lattice, frozen percolation does not exist
(see \cite[Section~3]{BT01} for an account of the argument).

On the other hand, Aldous \cite{Ald00} showed that frozen percolation on the
infinite 3-regular tree does exist. Under natural additional assumptions, such
a process is even unique in law. This was partially already observed in
\cite{Ald00} and made more precise in \cite[Thm~2]{RST19}. The problem of
almost sure uniqueness stayed open for 19 years, but has recently been solved
negatively in \cite[Thm~3]{RST19}, where it is shown that the question
whether a given edge freezes cannot be decided only by looking at the activation
times of all edges.


The proof of \cite[Thm~3]{RST19} depends on detailed calculations that are
specific to the details of the model. As a result, the question of almost sure
uniqueness is still open for frozen percolation on $n$-regular trees with
$n>3$. This raises the question whether model specific calculations are
necessary, or whether the absence of almost sure uniqueness can alternatively
be demonstrated by more general, ``soft'' arguments that have so far been
overlooked.

The results in the present paper suggest that this is not the case and model
specific calculations are, to some degree, unavoidable. We look at a modified
model in which edges can freeze only at a certain countable set of times. For
the resulting model, which depends on a parameter $0<\tet<1$, we show that
under the same natural additional assumptions that guarantee uniquess in law,
there exists a nontrivial critical value $\tet^\ast$ such that almost sure
uniqueness holds for $\tet\leq\tet^\ast$ but not for $\tet>\tet^\ast$.

It turns out that it is mathematically simpler to formulate our results for
frozen percolation on a certain oriented tree,
the Marked Binary Branching Tree (MBBT), a random oriented continuum tree
introduced in \cite{RST19}. Using methods of Section~3 of that paper, our
results can also be translated into results for the unoriented 3-regular
tree. For brevity, we omit the details of the latter step and stick for the
remainder of the paper to the oriented (rather than the unoriented) setting on the MBBT (rather than the 3-regular tree).

\subsection{Frozen percolation on the MBBT}

Let $\T$ be the set of all finite words $\ibf=i_1\cdots i_n$ $(n\geq 0)$ made
up from the alphabet $\{1,2\}$. We call $|\ibf|:=n$ the length of the word
$\ibf$ and denote the word of length zero by $\wurz$, which we distinguish
notationally from the empty set $\emptyset$. The concatenation of two words
$\ibf=i_1\cdots i_n$ and $\jbf=j_1\cdots j_m$ is denoted by
$\ibf\jbf:=i_1\cdots i_nj_1\cdots j_m$. We view $\T$ as an oriented tree with
root $\wurz$, in which each point $\ibf$ has two offspring $\ibf 1$ and
$\ibf 2$, and each point $\ibf=i_1\cdots i_n$ except for the root has one
parent $\lvec\ibf:=i_1\cdots i_{n-1}$. In pictures, we draw the root at
the bottom and we draw the descendants of a point above their predecessor.
By definition, a \emph{rooted subtree} of $\T$ is a subset $\U\sub\T$ such
that $\lvec\ibf\in\U$ for all $\ibf\in\U\beh\{\wurz\}$. We call
$\pa\U:=\{\ibf\in\T\beh\U:\lvec\ibf\in\U\}$ the \emph{boundary} of $\U$,
and we use the convention that $\partial\U=\{\varnothing\}$ if $\U=\emptyset$.

Let $(\tau_\ibf,\kappa_\ibf)_{\ibf\in\T}$ be i.i.d.\ uniformly distributed on
$[0,1]\times\{1,2\}$. We interpret $\tau_\ibf$ as the \emph{activation time}
of $\ibf$ and $\kappa_\ibf$ as its \emph{number of legal offspring}. If
$\kappa_\ibf=1$, then we call $\ibf 1$ and $\ibf 2$ the legal and illegal
offspring of $\ibf$, respectively. Points $\ibf\in\T$ with $\kappa_\ibf=1$
or $=2$ are called \emph{internal points} and \emph{branching points},
respectively. We denote the corresponding sets as
$\I:=\{\ibf\in\T:\kappa_\ibf=1\}$ and $\B:=\{\ibf\in\T:\kappa_\ibf=2\}$. Only
activation times of internal points matter; activation times of branching
points will not be used. For any $\ibf\in\T$ and $A\sub\T$, we write
$\ibf\percol{A}\infty$ if there exist $(j_k)_{k\geq 1}$ such that
\be\label{percol}
{\rm(i)}\quad j_{k+1}\leq\kappa_{\ibf j_1\cdots j_k}
\quand
{\rm(ii)}\quad \ibf j_1\cdots j_k\in A\quad\mbox{for all }k\geq 0.
\ee
In words, this says that there is an infinite open upwards path through $A$
starting at $\ibf$ such that each next point is a legal offspring of its
parent.

We will be interested in frozen percolation on $\T$ with the following
informal description. At any time, points can be \emph{closed}, \emph{frozen},
or \emph{open}. Once a point is frozen or open, it stays that way. Initially,
all branching points are open and all internal points are closed. Branching
points stay open for all time. An internal
point $\ibf$ becomes open at its activation time $\tau_\ibf$ provided that, by this
time, it has not yet become frozen. The rules for freezing points are as
follows. We fix a set $\Xi\sub(0,1]$ that is closed w.r.t.\ the relative
topology of $(0,1]$. Letting $\Ob^t$ denote the set of
open points at time $t$, we decree that up to and including its
activation time, a closed internal point $\ibf$ becomes frozen at the
first time in $\Xi$ when its legal offspring percolates, i.e., when
$\ibf 1\percol{\Ob^t}\infty$.


Let
\be\label{Tt}
\T^t:=\{\ibf\in\I:\tau_\ibf\leq t\big\}\cup\B
\qquad(0\leq t\leq 1)
\ee
denote the set of all points at time $t$ that are either an internal point that has already been activated
or a branching point. Let
$\F$ denote the set of internal points that eventually become frozen.
Since once a point opens or freezes, it stays open or frozen for the remaining time,
the set of open points at time $t$ is given by $\Ob^t=\T^t\beh\F$. In view of this,
we make our informal description precise
by saying that a random subset $\F$ of $\T$ \emph{solves the frozen percolation
  equation} for the \emph{set of possible freezing times} $\Xi$ if
\be\label{frozdef}
\mbox{$\ibf\in\F$ if and only if $\kappa_\ibf=1$ and $\ibf
  1\percol{\T^t\beh\F}\infty$ for some $t\in\Xi\cap(0,\tau_\ibf]$,}
\ee
which says that the points that eventually become frozen are those internal
points $\ibf$ for which $\ibf 1$ percolates at some time in $\Xi$ before or
at the activation time of $\ibf$.

It turns out that solutions to (\ref{frozdef}) always exist, but the question
of uniqueness is more subtle. To get at least uniqueness in law, we impose
additional conditions. We write $\om_\ibf:=(\tau_\ibf,\kappa_\ibf)$
$(\ibf\in\T)$ and for any $\jbf\in\T$, we let
\be\label{Om}
\Om_\jbf:=\big(\om_{\jbf\ibf}\big)_{\ibf\in\T}
\ee
denote the i.i.d.\ randomness that resides in the subtree of $\T$ rooted at
$\jbf$. In particular, we write $\Om:=\Om_\wurz$. If $\F$ is a solution to the
frozen percolation equation, then for each $\jbf\in\T$, we define a random
subset $\F_\jbf$ of $\T$ by
\be
\F_\jbf:=\{\ibf\in\T:\jbf\ibf\in\F\}.
\ee
We say that a solution $\F$ to the frozen percolation equation (\ref{frozdef})
is \emph{stationary} if the law of $(\Om_\jbf,\F_\jbf)$ does not depend on
$\jbf\in\T$. We say that $\F$ is \emph{adapted} if for each finite rooted
subtree $\U\sub\T$, the collection of random variables
$(\Om_\jbf,\F_\jbf)_{\jbf\in\pa\U}$ is independent of
$(\om_\ibf)_{\ibf\in\U}$. Finally, we say that $\F$ \emph{respects the tree
  structure} if $(\Om_\jbf,\F_\jbf)_{\jbf\in\pa\U}$ is a collection of
independent random variables for each finite rooted subtree $\U\sub\T$.

With these definitions, we can formulate our first result about existence and
uniqueness in law of solutions to the frozen percolation equation
(\ref{frozdef}). In the special case that $\Xi=(0,1]$, the following theorem
has been proved before in (in a somewhat different guise) in
\cite[Thm~2]{RST19}.

\bt[Uniqueness in law of frozen percolation]
Let\label{T:frozen} $\Xi$ be a closed subset of $(0,1]$ (w.r.t.\ the relative
topology). Then there exists a solution $\F$ of the frozen percolation
equation (\ref{frozdef}). This solution can be chosen so that it is
stationary, adapted, and respects the tree structure. Subject to these
additional conditions, the joint law of $\Om$ and $\F$ is uniquely determined.
\et

We will prove Theorem~\ref{T:frozen} in Subsection~\ref{S:uni}.
As we will see in the coming subsections, the question of almost sure
uniqueness of solutions to the frozen percolation equation is subtle and the
answer depends on the choice of the closed set~$\Xi$.

In the remainder of the present subsection, which can be skipped at a first
reading, we explain how our set-up relates to the definition of the Marked
Binary Branching Tree (MBBT) introduced in \cite{RST19}. Let
\be
\S:=\big\{i_1\cdots i_n\in\T:i_m\leq\kappa_{i_1\cdots i_{m-1}}
\ \forall 1\leq m\leq n\big\}
\ee
denote the random rooted subtree of $\T$ consisting of all legal descendants
of the root. Then $\S$ is the family tree of a branching process in which each
individual has one or two offspring, with equal probabilities. For any rooted
subtree $\U\sub\S$, we call
\be
\nab\U:=\pa\U\cap\S
\ee
the \emph{boundary} of $\U$ \emph{relative to} $\S$.

Let $(\ell_\ibf)_{\ibf\in\T}$ be i.i.d.\ exponentially distributed random
variables with mean $1/2$, independent of $\Om$. We interpret $\ell_\ibf$ as
the lifetime of the individual $\ibf$ and let
\be\label{birthdeath}
b_{i_1\cdots i_n}:=\sum_{k=0}^{n-1}\ell_{i_1\cdots i_k}
\quand
d_{i_1\cdots i_n}:=\sum_{k=0}^{n}\ell_{i_1\cdots i_k}
\ee
with $b_\wurz:=0$ and $d_\wurz:=\ell_\wurz$ denote the birth and death
times of $i_1\cdots i_n\in\T$. For $h\geq 0$, we let
\be\ba{l@{\qquad}l}\label{nabS}
\dis\T_h:=\big\{\ibf\in\T:d_\ibf\leq h\big\},
&\dis\pa\T_h=\big\{\ibf\in\T:b_\ibf\leq h<d_\ibf\big\},\\[5pt]
\dis\S_h:=\T_h\cap\S,
&\dis\nab\S_h=\pa\T_h\cap\S
\ec
denote the sets of individuals in $\T$ or $\S$ that have died by time $h$ and
those that are alive at time $h$, respectively. Note that the former are
a.s.\ finite rooted subtrees of $\T$ and $\S$, respectively, and the latter
are their boundaries relative to $\T$ or $\S$. Now
\be\label{branch}
(\nab\S_h)_{h\geq 0}
\ee
is a continuous-time branching process subject to the following dynamics:
\begin{itemize}
\item each individual $\ibf$ is with rate 1 replaced by two new individuals
  $\ibf 1$ and $\ibf 2$,
\item each individual $\ibf$ is with rate 1 replaced by one new individual
  $\ibf 1$.
\end{itemize}
Let $(\nab\S_{h-})_{h\geq 0}$ denote the left-continuous modification of the
branching process in (\ref{branch}) and let
\be
\Ti:=\big\{(\ibf,h):\ibf\in\nab\S_{h-}, h\geq 0\big\}.
\ee
As in \cite[Subsection~1.5]{RST19}, we equip $\Ti$ with a metric $d$
by setting $d\big((\ibf,h),(\jbf,g)\big):=h+g-\tau$, where $\tau$ is
the last time before $h\wedge g$ when there existed a common ancestor
of $\ibf$ and $\jbf$. Then $\Ti$ is a random continuum tree. We can
think of $\Ti$ as the family tree of a rate one binary branching
process. Recall that $\I=\{\ibf\in\T:\kappa_\ibf=1\}$ denotes the set
of internal points of $\T$. Let
\be
\Pi_0:=\big\{(\ibf,d_\ibf):\ibf\in\I\cap\S\big\}
\quand
\Pi:=\big\{(\ibf,d_\ibf,\tau_\ibf):\ibf\in\I\cap\S\big\}.
\ee
In words, $\Pi_0$ consists of all points $z=(\ibf,d_\ibf)$ in the continuum tree $\Ti$ at which an individual $\ibf$ dies and is replaced by a single new individual $\ibf 1$, and $\Pi$ consists of all pairs $(z,\tau_z)$ where $z\in\Pi_0$ and $\tau_z$ is the activation time of the individual that dies at this point. Then the pair $(\Ti,\Pi)$ is a \emph{Marked Binary Branching Tree} (MBBT) as defined in \cite[Subsection~1.5]{RST19}. As explained in \cite[Subsection~1.7]{RST19}, the MBBT naturally arises as the near-critical scaling limit of percolation on a wide class of oriented trees.

If we forget about the specific labeling of elements of $\Ti$, i.e., if we are only interested in $\Ti$ as a metric space where we view two metric spaces as equal if they are isometric, then we can no longer recognise from $\Ti$ at which points a single individual is replaced by a single individual with a different label. In such a setting one can check that conditional on $\Ti$, the set $\Pi$ is a Poisson point process of intensity one on $\Ti\times[0,1]$. In particular, $\Pi_0$ is a Poisson point process of intensity one on $\Ti$ and conditionally on $(\Ti,\Pi_0)$, there is an independent, uniformly distributed activation time $\tau_z$ attached to each point $z\in\Pi_0$.

Frozen percolation on the MBBT has been introduced in
\cite[Subsection~1.6]{RST19}. Our earlier definitions, translated into the
language of the MBBT, result in a process with the following informal
description. Initially, all points $z\in\Pi_0$ are closed. Such
points open at their activation time $\tau_z$, provided that by this time
they have not yet become frozen. A point $z\in\Pi_0$ freezes at
the first time in $\Xi$ before or at its activation time when the open
component of $\Ti$ that sits just above the point has infinite size.

\subsection{Burning times}\label{S:burn}

Let $\Xi\sub(0,1]$ be a relatively closed set of possible freezing times and let $\F$ be
a solution to the frozen percolation equation (\ref{frozdef}). We define the
\emph{burning time} of a point $\ibf\in\T$ as
\be\label{YF}
Y_\ibf:=\inf\big\{t\in\Xi:
\ibf\percol{\T^t\beh\F}\infty\big\}\qquad(\ibf\in\T),
\ee
with the convention that $\inf\emptyset:=\infty$. The choice of the term
``burning time'' is motivated by a certain analogy with forest fire models.
The following lemma implies that if $Y_\ibf\leq 1$, then the infimum in
(\ref{YF}) is in fact a minimum.

\bl[Percolation times]
For\label{L:perctime} any random subset $\A\sub\T$ and $\ibf\in\T$, the set
$\{t\in[0,1]:\ibf\percol{\T^t\beh\A}\infty\}$ is a.s.\ closed.
\el

We will prove Lemma~\ref{L:perctime} and Lemma~\ref{L:FY} below in
Section~\ref{S:uni}. By formula (\ref{YF}), the burning times
$(Y_\ibf)_{\ibf\in\T}$ are a.s.\ uniquely determined by the set $\F$ and the
i.i.d.\ randomness $\Om$. The following lemma shows that conversely, given
$\Om$ and $(Y_\ibf)_{\ibf\in\T}$, one can recover $\F$.

\bl[Frozen points]
Let\label{L:FY} $\F$ be a solution to the frozen percolation equation
(\ref{frozdef}) and let $(Y_\ibf)_{\ibf\in\T}$ be defined by (\ref{YF}). Then
\be\label{FY}
\F=\big\{\ibf\in\I:Y_{\ibf 1}\leq\tau_\ibf\big\}.
\ee
\el

\begin{remark}
If $\F$ is adapted, then $Y_{\ibf 1}$ is independent of $\tau_\ibf$ and hence $\P[Y_{\ibf 1}=\tau_\ibf]=0$ for each $\ibf\in\T$. According to our definitions, the point $\ibf$ freezes when $Y_{\ibf 1}=\tau_\ibf$, but as long as we only discuss adapted solutions, it in fact does not matter how things are defined in this case.
\end{remark}

Let $I:=[0,1]\cup\{\infty\}$. If $\F$ is a solution to the frozen percolation
equation (\ref{frozdef}), then it is not hard to see that the burning times
$(Y_\ibf)_{\ibf\in\T}$ satisfy the inductive relation
\be\label{Yind}
Y_\ibf=\chi[\tau_\ibf,\kappa_\ibf](Y_{\ibf 1},Y_{\ibf 2}),
\ee
where $\chi:[0,1]\times\{1,2\}\times I^2\to I$ is the function
\be\label{chi_def}
\chi[\tau,\kappa](x,y):=\left\{\ba{ll}
x\quad&\mbox{if }\kappa=1,\ x>\tau,\\[5pt]
\infty\quad&\mbox{if }\kappa=1,\ x\leq\tau,\\[5pt]
x\wedge y\quad&\mbox{if }\kappa=2.\ea\right.
\ee

Assume that $\F$ is stationary, adapted, and respects the tree
structure. Then the law of $Y_\wurz$ satisfies the Recursive Distributional
Equation (RDE)
\be\label{uni_RDE}
Y_\wurz\isd\chi[\om](Y_1,Y_2),
\ee
where $\isd$ denotes equality in distribution, $Y_1,Y_2$ are i.i.d.\ copies of
$Y_\wurz$, and $\om$ is an independent uniformly distributed random variable
on $[0,1]\times\{1,2\}$. Proposition~37 of \cite{RST19} classifies all
solutions of the RDE (\ref{uni_RDE}). Expanding on that result, we can prove the
following lemma, which is the basis of our proof of Theorem~\ref{T:frozen}.

\bl[Law of burning times]
For\label{L:burnlaw} each set $\Xi\sub(0,1]$ that is closed w.r.t.\ the
relative topology of $(0,1]$, there exists a unique
probability measure $\rho_\Xi$ on $I$ such that
\begin{enumerate}
\item $\rho_\Xi$ solves the RDE (\ref{uni_RDE}),
\item $\rho_\Xi$ is concentrated on $\Xi\cup\{\infty\}$,
\item $\rho_\Xi\big([0,t]\big)\geq\ha t$ for all $t\in\Xi$.
\end{enumerate}
Assume that $\F$ solves the frozen percolation equation (\ref{frozdef}) for the
set of possible freezing times $\Xi$ and that $\F$ is stationary, adapted, and
respects the tree structure. Then the burning time of the root $Y_\wurz$,
defined in (\ref{YF}), has law $\rho_\Xi$.
\el

We will prove Lemma~\ref{L:burnlaw} together with Lemma~\ref{L:genRDE} below
in Section~\ref{S:uni}. The following lemma shows that every solution of the
RDE (\ref{uni_RDE}) is of the form $\rho_\Xi$ for some closed set
$\Xi\sub(0,1]$. Below, ${\rm supp}(\mu)$ denotes the support of a measure~$\mu$.

\bl[General solutions to the RDE]
If\label{L:genRDE} $\rho$ solves the RDE (\ref{uni_RDE}), then $\rho=\rho_\Xi$
with $\Xi:=(0,1]\cap{\rm supp}(\rho)$.
\el

By condition~(ii) of Lemma~\ref{L:burnlaw}, for a general closed subset $\Xi\sub(0,1]$, we have $(0,1]\cap{\rm supp}(\rho_\Xi)\sub\Xi$. This inclusion may be strict,\footnote{For example, if $\Xi=\{s,t\}$ with $0<s<t\leq 1$ and $t\leq 2s$, then using Lemma~\ref{L:RDEint} below it is easy to check that $\rho_\Xi=s\de_s+(1-s)\de_\infty$.} however, so the correspondence between solutions of the RDE (\ref{uni_RDE}) and sets of possible freezing times is not one-to-one.

\subsection{Almost sure uniqueness}\label{S:as}

Recall from (\ref{Om}) that $\Om=(\om_\ibf)_{\ibf\in\T}$ with
$\om_\ibf=(\tau_\ibf,\kappa_\ibf)$. For a given set $\Xi\sub(0,1]$ of possible
freezing times, we say that solutions to the frozen percolation equation
(\ref{frozdef}) are \emph{almost surely unique} if, whenever $\F$ and $\F'$
solve (\ref{frozdef}) relative to the same $\Om$, one has $\F=\F'$ a.s.

Let us first note that it is easy to show that if $\Xi$ is a finite subset of $(0,1]$ then the solutions of (\ref{frozdef}) are almost surely unique. Indeed, if $\Xi=\{ t_1,...,t_n \}$ with $0<t_1<\dots<t_n \leq 1$ then one proves by induction on $k=1,\dots,n$  that the set of vertices that burn at time $t_k$ is determined by $\Omega$. This implies that the burning time $Y_\ibf$ of each vertex  $\ibf\in\T$ is determined by $\Omega$, hence the set $\F$ is also determined by $\Omega$ using Lemma~\ref{L:FY}.

For the remainder of the paper, we will mostly focus our attention on a
one-parameter family of sets of possible burning times. For $0<\tet<1$, we
define $\Xi_\tet:=\{\tet^n:n\in\N\}$ (with $\N:=\{0,1,2,\ldots\}$) and we set
$\Xi_1:=(0,1]$, which can naturally be viewed as the limit of $\Xi_\tet$ as
$\tet\to 1$. As a straightforward application of \cite[Prop~37]{RST19},
one can check that for these sets, the probability laws $\rho_\Xi$ from
Lemma~\ref{L:burnlaw} are given by
\bc\label{muXi}
\dis\rho_{\Xi_\tet}(\di t)
&=&\dis\frac{1-\tet}{1+\tet}\sum_{k=0}^\infty\tet^k\de_{\tet^k}(\di t)
+\frac{\tet}{1+\tet}\de_\infty(\di t)\qquad(0<\tet<1),\\[15pt]
\dis\rho_{\Xi_1}(\di t)&=&\dis\ha\di t+\ha\de_\infty(\di t).
\ec
It is not hard to see that $\rho_{\Xi_\tet}$ converges weakly to $\rho_{\Xi_1}$
as $\tet\to 1$.

We conjecture that for the sets $\Xi_\tet$ with $0<\tet < \tfrac{1}{2}$, solutions to the frozen percolation equation are almost surely unique. We have not been able to prove this, but we can prove that there exists a $\tfrac{1}{2}<\tet^\ast<1$ such that almost sure uniqueness does not hold for $\tet>\tet^\ast$ and almost sure uniqueness holds under additional assumptions for $\tet\leq\tet^\ast$.

To explain this in more detail, fix $\Om=(\om_\ibf)_{\ibf\in\T}$, let $\F$ be a solution to the frozen percolation equation (\ref{frozdef}) that is stationary, adapted, and respects the tree structure, and let $(Y_\ibf)_{\ibf\in\T}$ be the burning times defined in (\ref{YF}). Then
\begin{enumerate}
\item For each finite rooted subtree $\U\sub\T$, the r.v.'s $(Y_\ibf)_{\ibf\in\pa\U}$ are i.i.d.\ and independent of $(\om_\ibf)_{\ibf\in\U}$.
\item $\dis Y_\ibf=\chi[\om_\ibf](Y_{\ibf 1},Y_{\ibf 2})\qquad(\ibf\in\T)$.
\end{enumerate}
This means that $(\om_\ibf,Y_\ibf)_{\ibf\in\T}$ is a \emph{Recursive Tree Process} (RTP) as defined in \cite{AB05}. Note that since by Theorem~\ref{T:frozen}, the joint law of $(\Om,\F)$ is uniquely determined, the same is true for the law of the RTP $(\om_\ibf,Y_\ibf)_{\ibf\in\T}$. Following a definition from \cite{AB05}, one says that such an RTP is \emph{endogenous} if $Y_\wurz$ is measurable w.r.t.\ the \si-field generated by the collection of random variables $\Om=(\om_\ibf)_{\ibf\in\T}$. We make the following observation.

\bl[Endogeny and almost sure uniqueness]
Let $(\om_\ibf,Y_\ibf)_{\ibf\in\T}$ be the RTP defined above. Then the\label{L:endas} following claims are equivalent:
\begin{enumerate}
\item The RTP $(\om_\ibf,Y_\ibf)_{\ibf\in\T}$ is endogenous.
\item If $\F$ and $\F'$ solve (\ref{frozdef}) relative to the same $\Om$, and
  moreover $\F$ and $\F'$ are stationary, adapted, and respect the tree
  structure, then $\F=\F'$ a.s.
\end{enumerate}
\el

\bpro
Fix $\Om=(\om_\ibf)_{\ibf\in\T}$ and let $\F$ be a solution to (\ref{frozdef}) relative to $\Om$ that is stationary, adapted, and respects the tree structure. By Theorem~\ref{T:frozen}, such a solution exists (perhaps on an extended probability space) and the joint law of $(\Om,\F)$ is uniquely determined. Let $(\om_\ibf,Y_\ibf)_{\ibf\in\T}$ be the corresponding RTP defined by (\ref{YF}). Endogeny says that $Y_\wurz$ is measurable w.r.t.\ the \si-field generated by $\Om$. Since $(\om_{\jbf\ibf},Y_{\jbf\ibf})_{\ibf\in\T}$ is equally distributed with $(\om_\ibf,Y_\ibf)_{\ibf\in\T}$, endogeny implies that $Y_\jbf$ is measurable w.r.t.\ the \si-field generated by $\Om$ for each $\jbf\in\T$. This shows that endogeny is equivalent to the statement that $(Y_\ibf)_{\ibf\in\T}$ is measurable w.r.t.\ the \si-field generated by $\Om$. Since by (\ref{YF}) and Lemma~\ref{L:FY}, given $\Om$, the set $\F$ and collection of random variables $(Y_\ibf)_{\ibf\in\T}$ determine each other a.s.\ uniquely, this is in turn equivalent to the statement that $\F$ is measurable w.r.t.\ the \si-field generated by $\Om$. Equivalently, this says that the conditional law of $\F$ given $\Om$ is a delta-measure. This shows that (i) implies (ii). Conversely, if (i) does not hold, let us construct a random variable $\F'$ such that  $\F'$ is conditionally independent of $\F$
given $\Om$, moreover the conditional distributions of $\F$  and $\F'$ are the same if we condition on $\Om$. In particular, $\F'$ then also solves (\ref{frozdef}) relative to $\Om$ and is stationary, adapted, and respects the tree structure. Since the conditional law of $\F$ given $\Om$ is not a delta-measure, we then have $\F\neq\F'$ with positive probability, showing that (ii) does not hold.
\epro

It follows from Lemma~\ref{L:endas} that if the RTP
$(\om_\ibf,Y_\ibf)_{\ibf\in\T}$ is nonendogenous, then solutions to the frozen
percolation equation (\ref{frozdef}) are not almost surely unique.
We pose the converse implication as an open problem:
\begin{quote}
\textbf{Question~1} Does endogeny of the RTP $(\om_\ibf,Y_\ibf)_{\ibf\in\T}$
imply almost sure uniqueness of solutions to the frozen percolation equation
(\ref{frozdef})?
\end{quote}
In other words, Question~1 asks whether in part~(ii) of
Lemma~\ref{L:endas}, one can remove the conditions that $\F$ and $\F'$
are stationary, adapted, and respect the tree structure.

We now address the question of endogeny. To state our main result, we need one
technical lemma, which introduces a parameter $\tet^\ast$. Numerically, we
find that $\theta^* \approx 0.636$.

\bl[The critical parameter]
Let\label{L:tetdef} $g:(0,1)\to\R$ be defined as \begin{equation}
g(\tet):=2(1+\tet)
-\sum_{\ell=0}^{\infty}\frac{\tet^{2\ell}(1-\tet^2)}{2/(1+\tet)-\tet^{\ell}}.
\end{equation}
Then $g$ is a strictly decreasing continuous function that changes sign at a
point $\tet^\ast \in (\tfrac{1}{2},1)$.
\el

The following theorem is the main result of our paper. For $\tet=1$,
the result has been proved in \cite[Thm~12]{RST19} but the result is
new in the regime $0<\tet<1$.

\bt[Endogeny]
Let\label{T:endog} $\tet^\ast$ be as in Lemma~\ref{L:tetdef}.
Let $0<\tet\leq 1$, and for the set of possible freezing times
$\Xi_\tet$, let $(\Om,\F)$ be defined as in Theorem~\ref{T:frozen}. Let
$(\om_\ibf,Y_\ibf)_{\ibf\in\T}$ be the corresponding RTP of burning times
defined in (\ref{YF}). This RTP is endogenous for $0<\tet\leq\tet^\ast$ but
not for $\tet^\ast<\tet\leq 1$.
\et

In the Subsections \ref{S:scale} and \ref{S:scRD} below, we eleborate
a bit on our methods for proving Theorem~\ref{T:endog}. We use the
remainder of the present subsection to make a few additional comments
on Question~1 posed above.

Set $\F_0:=\emptyset$ and define inductively for $k\geq 1$
\be\label{Fk}
Y^k_\ibf:=\inf\big\{t\in\Xi:\ibf\percol{\T^t\beh\F_{k-1}}\infty\big\}
\quad(\ibf\in\T)\quand
\F_k:=\big\{\ibf\in\I:Y^k_{\ibf 1}\leq\tau_\ibf\big\},
\ee
with the usual convention that $\inf\emptyset:=\infty$. Then it is not hard to see that
\be\ba{r@{\ }l@{\quad}r@{\ }l}\label{incdec}
{\rm(i)}&\dis\F_{2n}\sub\F_{2n+1}\quad
&{\rm(ii)}&\dis\F_{2n+1}\supset\F_{2n+2},\\[5pt]
{\rm(iii)}&\dis\F_{2n}\sub\F_{2n+2}\quad
&{\rm(iv)}&\dis\F_{2n+1}\supset\F_{2n+3},
\ea\qquad(n\in\N).
\ee
Moreover, if $\F$ solves the frozen percolation equation (\ref{frozdef}), then
\be\label{Fpm2}
\F_{2n}\sub\F\sub\F_{2n+1}\qquad(n\in\N).
\ee
For the sets of possible burning times $\Xi_\tet$ with $0<\tet\leq 1$, it is possible to verify by calculation that $\F_2=\emptyset$ a.s.\ if and only if $\tet\geq 1/2$. In particular, if $\tet<1/2$, then there are points that must freeze in any solution to the frozen percolation equation (\ref{frozdef}). We conjecture that in fact, for any $\tet<1/2$, the sets $\bigcup_{n\in\N}\F_{2n}$ and $\bigcap_{n\in\N}\F_{2n+1}$ are a.s.\ equal and as a result, solutions to the frozen percolation equation (\ref{frozdef}) are a.s.\ unique for all $\tet<1/2$. Note that even if this conjecture is correct, it does not not fully settle Question~1, since the parameter $\tet^\ast$ from Lemma~\ref{L:tetdef} is strictly larger than $1/2$.

\subsection{Scale invariance}\label{S:scale}

We fix a set of possible burning
times $\Xi$, construct a frozen percolation process $(\Om,\F)$ as in
Theorem~\ref{T:frozen} and let $(\om_\ibf,Y_\ibf)_{\ibf\in\T}$ be the
corresponding RTP of burning times defined in (\ref{YF}). Conditional on
$\Om=(\om_\ibf)_{\ibf\in\T}$, let $(Y'_\ibf)_{\ibf\in\T}$ be an independent
copy of $(Y_\ibf)_{\ibf\in\T}$. Then endogeny is equivalent to the statement
that $Y_\wurz=Y'_\wurz$ a.s. An easy argument, which can be found in
\cite[Appendix~B]{MSS18}, shows that the joint law of
$(Y_\wurz,Y'_\wurz)$ solves the \emph{bivariate RDE}
\be\label{bivar_RDE}
(Y_\wurz,Y'_\wurz)\isd\big(\chi[\om](Y_1,Y_2),\chi[\om](Y'_1,Y'_2)\big),
\ee
where $(Y_1,Y'_1)$ and $(Y_2,Y'_2)$ are independent copies of
$(Y_\wurz,Y'_\wurz)$ and $\om$ is an independent uniformly distributed random
variable on $[0,1]\times\{1,2\}$. We define probability laws on $I^2$ by
\be\label{unnu}
\un\rho^{(2)}_\Xi:=\P\big[(Y_\wurz,Y'_\wurz)\in\,\cdot\,\big]
\quand
\ov\rho^{(2)}_\Xi:=\P\big[(Y_\wurz,Y_\wurz)\in\,\cdot\,\big].
\ee
The marginals of these measures are the measure $\rho_\Xi$ defined in Lemma~\ref{L:burnlaw}. General theory for RTPs yields the following:

\bp[Bivariate uniqueness]
The\label{P:bivar} following statements are equivalent:
\begin{enumerate}
\item The RTP $(\om_\ibf,Y_\ibf)_{\ibf\in\T}$ is endogenous.
\item $\un\rho^{(2)}_\Xi=\ov\rho^{(2)}_\Xi$.
\item The measure $\ov\rho^{(2)}_\Xi$ is the only solution of the bivariate RDE
  (\ref{bivar_RDE}) in the space of symmetric probability measures on $I^2$ with
  marginals given by $\rho_\Xi$.
\end{enumerate}
\ep

\bpro
The equivalence of (i) and (ii) follows immediately from the definitions in (\ref{unnu}), the equivalence of (i) and (iii) is proved in \cite[Thm~11]{AB05} (see also \cite[Thm~1]{MSS18}), and the implication (iii)$\volgt$(ii) is trivial.
\epro

Proposition~\ref{P:bivar} is our main tool for proving
Theorem~\ref{T:endog}, but in order to be able to successfully apply
Proposition~\ref{P:bivar}, we need one more idea. For a general set of
possible burning times $\Xi$, it is difficult to find all solutions of
the bivariate RDE (\ref{bivar_RDE}) in the space of symmetric
probability measures with marginals given by $\rho_\Xi$. For the
special sets $\Xi_\tet$ with $0<\tet\leq 1$, however, it turns out to
be sufficient to look only at scale invariant solutions of the
bivariate RDE. As we will explain below, this leads to a significant
simplification of the problem, which allows us to prove
Theorem~\ref{T:endog} for the sets $\Xi_\tet$, but not for general
$\Xi$.

It has been proved in \cite[Prop~9]{RST19} that the law of the MBBT is
invariant under a certain scaling relation. This is ultimately the consequence
of the fact that the MBBT is itself the scaling limit of near-critical
percolation on trees of finite degree. We will not repeat the scaling property
of the MBBT here but instead formulate scaling properties of solutions to the
RDE (\ref{uni_RDE}) and bivariate RDE (\ref{bivar_RDE}) that are consequences
of the scaling of the MBBT.

For each $t>0$, we define a scaling map $\psi_t:I\to I$ by
\be\label{psit}
\psi_t(y):=\left\{\ba{ll}
\dis t^{-1}y\quad&\mbox{ if }y\leq t,\\
\dis\infty\quad&\mbox{otherwise.}\ea\right.
\ee
We let $\Mi^{(1)}$ denote the space of all probability measures $\rho$ on
$I=[0,1]\cup\{\infty\}$ that satisfy $\rho\big([0,t]\big)\leq t$ for all $0\leq
t\leq 1$, and we define scaling maps $\Ga_t$ by
\be\label{Ga1}
\Ga_t\rho:=t^{-1}\rho\circ\psi_t^{-1}+(1-t^{-1})\de_\infty
\qquad(\rho\in\Mi^{(1)},\ t>0),
\ee
where $\de_\infty$ denotes the delta-measure at $\infty$. It is not hard to
see that $\Ga_t$ maps the space $\Mi^{(1)}$ into itself. In particular, for
$0<t<1$, the assumption $\rho\big([0,t]\big)\leq t$ guarantees that
$\Ga_t\rho$ puts nonnegative mass at $\infty$. The following lemma
says that the set of solutions to the RDE (\ref{uni_RDE}) is invariant under
the scaling maps $\Ga_t$. Below, $\rho_\Xi$ denotes the measure defined in
Lemma~\ref{L:burnlaw}.

\bl[Scale invariance of the RDE]
Let\label{L:scaleRDE} $\rho$ be a solution to the RDE (\ref{uni_RDE}). Then
$\rho\in\Mi^{(1)}$, and for each $t>0$, the measure $\Ga_t\rho$ is also a
solution to the RDE (\ref{uni_RDE}). In particular, if $\Xi$ is a relatively
closed subset of $(0,1]$, then 
\be\label{scaleRDE}
\Ga_t\rho_\Xi=\rho_{\Xi'}\quad\mbox{with}\quad
\Xi':=\{t^{-1}y:y\in\Xi\}\cap[0,1]\qquad(t>0).
\ee
\el

For the bivariate RDE, a result similar to Lemma \ref{L:scaleRDE} holds, which
we formulate now. We say that a probability measure on $I^2$ is
\emph{symmetric} if it is invariant under the map $(y_1,y_2)\mapsto(y_2,y_1)$.
Let $\Mi^{(2)}$ denote the space of all symmetric probability measures
$\rho^{(2)}$ on $I^2$ that satisfy
\be\label{Mi2def}
\rho^{(2)}\big(
[0,t]\times I\cup I\times[0,t]\big)\leq t
\qquad\forall 0\leq t\leq 1.
\ee
We define $\psi^{(2)}_t:I^2\to I^2$ by
$\psi^{(2)}_t(y,y'):=\big(\psi_t(y),\psi_t(y')\big)$ and
we define $\Ga^{(2)}_t:\Mi^{(2)}\to\Mi^{(2)}$ by
\be\label{Ga2}
\Ga^{(2)}_t\rho:=t^{-1}\rho\circ(\psi^{(2)}_t)^{-1}+(1-t^{-1})\de_{(\infty,\infty)}
\qquad(\rho\in\Mi^{(2)},\ t>0).
\ee
We will prove that $\rho \in \Mi^{(2)} $ indeed implies $\Ga^{(2)}_t\rho \in \Mi^{(2)}$ in Section \ref{S:scaleprf}.
With the above definitions, we have the following lemmas, which are analogous to
Lemma~\ref{L:scaleRDE}. The measures $\un\rho^{(2)}_\Xi$ and $\ov\rho^{(2)}_\Xi$
that occur in Lemma~\ref{L:scalenu} are defined in (\ref{unnu}).

\bl[Scale invariance of bivariate RDE]
Let\label{L:scalebivRDE} $\rho^{(2)}$ be a symmetric solution to the bivariate
RDE (\ref{bivar_RDE}). Then $\rho^{(2)}\in\Mi^{(2)}$, and for each $t>0$,
the measure $\Ga^{(2)}_t\rho^{(2)}$ is also a solution to (\ref{bivar_RDE}).
\el


\bl[Scale invariance of special solutions]
Let\label{L:scalenu} $\Xi\sub(0,1]$ be relatively closed. Then, for each $t>0$,
\be
\Ga^{(2)}_t\un\rho^{(2)}_\Xi=\un\rho^{(2)}_{\Xi'}
\quand
\Ga^{(2)}_t\ov\rho^{(2)}_\Xi=\ov\rho^{(2)}_{\Xi'}
\quad\mbox{with}\quad
\Xi':=\{t^{-1}y:y\in\Xi\}\cap[0,1].
\ee
\el

\subsection{Scale invariant solutions to the bivariate RDE}\label{S:scRD}

In the present subsection, we explain how scale invariance helps us prove our
main result Theorem~\ref{T:endog}. It follows from Lemma~\ref{L:scaleRDE},
and can also easily be checked by direct calculation using
formula (\ref{muXi}), that the measures $\rho_{\Xi_\tet}$ are invariant under
scaling by $\tet$, and hence also by $\tet^n$ for each $n\geq 0$. Likewise,
$\rho_{\Xi_1}$ is invariant under scaling by any $0<t\leq 1$, so we have
\be\label{rhosca}
\Ga_t\rho_{\Xi_\tet}=\rho_{\Xi_\tet}\qquad\big(0<\tet\leq 1,\ t\in\Xi_\tet).
\ee
Motivated by this, for $0<\tet\leq 1$, we let $\Mi^{(2)}_\tet$ denote the
space of probability measures $\rho^{(2)}$ on $I^2$ such that:
\begin{enumerate}
\item $\rho^{(2)}\in\Mi^{(2)}$,
\item the marginals of $\rho^{(2)}$ are given by $\rho_{\Xi_\tet}$,
\item $\Ga^{(2)}_t\rho^{(2)}=\rho^{(2)}$ for all $t\in\Xi_\tet$.
\end{enumerate}

Let $0<\tet\leq 1$, and for the set of possible freezing times $\Xi_\tet$, let
$(\Om,\F)$ be defined as in Theorem~\ref{T:frozen}. Let
$(\om_\ibf,Y_\ibf)_{\ibf\in\T}$ be the corresponding RTP of burning times
defined in (\ref{YF}). It follows from Proposition~\ref{P:bivar} and
Lemma~\ref{L:scalenu} that the RTP $(\om_\ibf,Y_\ibf)_{\ibf\in\T}$ is
endogenous if and only if $\ov\rho^{(2)}_{\Xi_\tet}$ is the only solution of the
bivariate RDE (\ref{bivar_RDE}) in the space $\Mi^{(2)}_\tet$.
In view of this, Theorem~\ref{T:endog} is implied by the following theorem.

\begin{theorem}[Scale invariant solutions of the bivariate RDE]
Let\label{the_theorem} $\tet^\ast$ be as in Lemma~\ref{L:tetdef}
and let $0<\tet\leq 1$. Then:
\begin{enumerate}
\item If $\theta \le \theta^*$ then $\ov\rho^{(2)}_{\Xi_\tet}$ is the only
  solution of the bivariate RDE (\ref{bivar_RDE}) in the space
  $\Mi^{(2)}_\tet$.
\item If $\theta^*<\theta$, then there exists a measure $\hat{\rho}^{(2)}
  \in \Mi^{(2)}_\tet$ with $\hat{\rho}^{(2)}\neq\ov\rho^{(2)}_{\Xi_\tet}$ that
  solves (\ref{bivar_RDE}).
\end{enumerate}
\end{theorem}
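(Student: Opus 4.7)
The plan is to translate the bivariate RDE (\ref{bivar_RDE}) on $\Mi^{(2)}_\tet$ into a concrete system of equations by exploiting the support, symmetry, and scale-invariance conditions, and then analyze the resulting system via its linearization at the diagonal. For $\tet<1$, the marginal condition forces any $\rho^{(2)}\in\Mi^{(2)}_\tet$ to be supported on $(\{\tet^k:k\in\N\}\cup\{\infty\})^2$. Writing $p_{i,j}:=\rho^{(2)}(\{(\tet^i,\tet^j)\})$, the scale-invariance $\Ga^{(2)}_\tet\rho^{(2)}=\rho^{(2)}$ forces $p_{i+1,j+1}=\tet\,p_{i,j}$ (with analogous recursions for atoms touching $\infty$). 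Together with symmetry and the marginal constraints, any member of $\Mi^{(2)}_\tet$ is then parametrized by the off-diagonal atoms $q_k:=p_{k,0}$ for $k\geq 1$, and the endogenous measure $\ov\rho^{(2)}_{\Xi_\tet}$ corresponds precisely to $q\equiv 0$.

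Next I would rewrite (\ref{bivar_RDE}) as an explicit nonlinear fixed-point equation for the sequence $(q_k)_{k\geq 1}$. Decomposing according to (\ref{chi_def}), the branching case $\kappa=2$ (where $Y=Y_1\wedge Y_2$ and $Y'=Y'_1\wedge Y'_2$) contributes a quadratic, convolution-type term in the $q_j$'s, while the internal case $\kappa=1$ contributes only a linear term, because both $Y$ and $Y'$ then depend on the single legal daughter and on the \emph{same} $\tau$. The point $q\equiv 0$ is automatically a fixed point, and linearizing there kills the quadratic contribution, leaving a linear operator $L$ acting on the $q$-sequence. The problem reduces to showing that the spectral radius of $L$ crosses $1$ exactly at $\tet=\tet^\ast$.

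An explicit computation of $L$ is expected to yield a characteristic equation matching $g(\tet)=0$: the series in Lemma \ref{L:tetdef} should arise from summing eigenvalue contributions over the scales $\tet^\ell$, and the prefactor $2(1+\tet)$ from the normalization of $\rho_{\Xi_\tet}$ in (\ref{muXi}). The main technical hurdle will be identifying a sign-definite leading eigenvector of $L$ on the scale-reduced state space and verifying the exact form of the characteristic equation; this is where the delicate combinatorics of the frozen-percolation dynamics enter the argument.

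With this spectral picture in place, the two parts of Theorem \ref{the_theorem} split cleanly. For part~(i) with $0<\tet\leq\tet^\ast$, the linearization is contractive in a suitable weighted $\ell^1$-type norm, and because the quadratic $\kappa=2$ term maps non-negative sequences to non-negative sequences, a monotone iteration argument forces any fixed point to collapse to $q\equiv 0$. For part~(ii) with $\tet^\ast<\tet\leq 1$, the leading eigenvalue of $L$ exceeds $1$, providing an unstable direction away from the diagonal. I would construct a non-diagonal solution $\hat\rho^{(2)}$ by solving truncated versions of the system (restricting to scales $\tet^k$, $0\leq k\leq N$, with the dynamics projected accordingly), applying a Brouwer/Schauder fixed-point theorem on a convex set bounded away from $q\equiv 0$ along the unstable eigenvector, and then extracting a weak limit as $N\to\infty$. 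The most delicate step will be ensuring that this limit still satisfies the full scale-invariance condition and lies in $\Mi^{(2)}_\tet$, and that it is genuinely distinct from $\ov\rho^{(2)}_{\Xi_\tet}$ rather than drifting back to the diagonal in the limit.
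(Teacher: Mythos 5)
Your overall framework --- exploiting scale invariance to collapse a symmetric bivariate measure in $\Mi^{(2)}_\tet$ to a one-dimensional sequence, and then tracking the recursion along the scales $\tet^n$ --- matches the paper's use of the \emph{signature} $f_{\rho^{(2)}}(n)=\rho^{(2)}([0,\tet^n]\times I\cup I\times[0,1])$, and your guess that $g(\tet)$ arises as a derivative of the recursion along the $\tet^\ell$-scales is essentially correct (the paper computes $\tilde f_\tet(\infty)=\partial_c f_{\tet,c}(\infty)\big|_{c=0^+}=g(\tet)$). However, the load-bearing step in your plan --- interpret $g(\tet)=0$ as the spectral radius of a linearized operator $L$ crossing $1$, prove uniqueness for $\tet\le\tet^*$ from contractivity of $L$ plus monotone iteration, and prove existence for $\tet>\tet^*$ by Brouwer/Schauder on truncations --- has genuine gaps that the paper closes with a different and more elementary device.

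The crucial missing ingredient is a \emph{one-parameter reduction of the full solution set}, which is the content of Lemma~\ref{lemma:equivalent_system}: a scale-invariant bivariate measure solves the bivariate RDE if and only if its signature $f$ coincides with a sequence $f_{\tet,c}$ that is completely determined by a single real ``shooting'' parameter $c\ge0$ via the two-term recursion \eqref{quadratic_eq_for_f_0}--\eqref{f_theta_c_n_geq_ugly}, and moreover $c$ is forced into the bounded range $\left[0,\,0\vee\tfrac{\tet(2\tet-1)}{(1+\tet)^2}\right]$. With this in hand, the entire theorem becomes a one-variable shooting problem: one asks for which $c\ge0$ the boundary condition $f_{\tet,c}(\infty)=\tfrac{1}{1+\tet}$ (dictated by condition~(ii) of Lemma~\ref{lemma:f_conditions}) holds. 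The sign of $g(\tet)=\tilde f_\tet(\infty)$ is then not a spectral radius but the derivative of the shooting map $c\mapsto f_{\tet,c}(\infty)$ at $c=0^+$; for $\tet\le\tet^*$ a quantitative lower bound (Lemma~\ref{lemma:theta<theta*}) shows $f_{\tet,c}(\infty)>\tfrac1{1+\tet}$ for \emph{all} admissible $c>0$ (not just small $c$), while for $\tet>\tet^*$ one has $f_{\tet,c}(\infty)<\tfrac1{1+\tet}$ for small $c>0$ and $f_{\tet,c}(\infty)>\tfrac1{1+\tet}$ for $c\ge4$ (Lemma~\ref{lemma:f_lower_bound}), so continuity of $c\mapsto f_{\tet,c}(\infty)$ (Lemma~\ref{lemma:f_continuous}) and the intermediate value theorem produce $\hat c>0$. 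Admissibility of the resulting signature as an element of $\Mi^{(2)}_\tet$ is verified by direct inequalities (Lemmas~\ref{lemma:conditions_fulfil}, \ref{lemma:every_alpha}), with no need for a limiting/compactness argument.

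Without the one-parameter reduction, your proposal does not close either direction. For part~(i), contractivity of the linearization $L$ at the diagonal plus nonnegativity of the quadratic part only yields local rigidity near $q\equiv0$; it does not exclude a fixed point at finite distance in the cone, and the monotone iteration you invoke is essentially the $(\F_k)$ scheme from~\eqref{Fk}, whose convergence to a common limit is itself equivalent to the endogeny you are trying to prove --- so this step is circular as stated. For part~(ii), a Brouwer/Schauder argument on truncated systems lacks the uniform-in-$N$ separation from the diagonal needed to prevent collapse in the weak limit, which you correctly flag as the delicate point but do not resolve; the shooting argument avoids this entirely because the candidate signature $f_{\tet,\hat c}$ is constructed exactly, not approximately.
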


We call $\ov\rho^{(2)}_{\Xi_\tet}$ the \emph{diagonal} solution of the
bivariate RDE since it is concentrated on $\{(y,y):y\in I\}$ (see
(\ref{unnu})). In the special case $\tet=1$, Theorem~\ref{the_theorem} has
been proved in \cite[Thm~12]{RST19}, where it is moreover shown that the
bivariate RDE (\ref{bivar_RDE}) has precisely two solutions in the
space $\Mi^{(2)}_1$. We conjecture that this holds more generally.
In Remark~\ref{remark:conj_unique} below, we present numerical evidence for
the following conjecture.

\bcon[Uniqueness of the nondiagonal solution]
For\label{conj:unique} all $\tet^\ast<\tet\leq 1$, the measures
$\ov\rho^{(2)}_{\Xi_\tet}$ and $\un\rho^{(2)}_{\Xi_\tet}$ defined in
(\ref{unnu}) are the only solutions of the bivariate RDE (\ref{bivar_RDE}) in
the space $\Mi^{(2)}_\tet$.
\econ

The main advantage of scale invariance is that it reduces the number of
parameters. In general, we can characterise a measure on $[0,1]^2$ by its
distribution function, which is a real function of two variables. However,
using scale invariance, we can characterise a measure
$\rho^{(2)}\in\Mi^{(2)}_\tet$ using a real function of one variable only, see
Definition~\ref{def:rho->f} below. This significantly simplifies the
calculations.

We can in fact be a little more general. Generalizing the definition
above (\ref{muXi}), for $0<\tet<1$ and $0<\al\leq 1$, let us define
$\Xi_{\tet,\al}:=\{\al\tet^n:n\in\N\}$. Then Lemma~\ref{L:scaleRDE} implies
that $\rho_{\Xi_{\tet,\al}}=\Ga_{1/\al}\rho_{\Xi_\tet}$. Moreover,
Proposition~\ref{P:bivar} and Lemma~\ref{L:scalenu} imply that the RTP
corresponding\footnote{The precise definition of an RTP corresponding to a solution to an RDE can be found below formula (\ref{RDE}) below.} to $\rho_{\Xi_{\tet,\al}}$ is endogenous if and only if the RTP
corresponding to $\rho_{\Xi_\tet}$ is endogenous. Since this does not
conceptually add anything new, for simplicity, we have formulated our main
results only for the set of possible burning times $\Xi_\tet$.

\section{Frozen percolation on the MBBT}

\subsection{Existence and uniqueness in law}\label{S:uni}

In this subsection, we prove Theorem~\ref{T:frozen} and
Lemmas~\ref{L:perctime}, \ref{L:FY}, \ref{L:burnlaw}, and \ref{L:genRDE}.\med

\bpro[of Lemma~\ref{L:perctime}]
It suffices to prove the claim for $\ibf=\wurz$. Let $P:=\{t\in[0,1]:
\wurz\percol{\T^t\beh\A}\infty\}$. Similar to the definition in
(\ref{percol}), for any $\ibf,\kbf\in\T$ and $A\sub\T$, we write
$\ibf\percol{A}\kbf$ if there exist a $\jbf=j_1\cdots j_n\in\T$ such
that $\kbf=\ibf\jbf$ and
\be\label{percolij}
{\rm(i)}\quad j_{k+1}\leq\kappa_{\ibf j_1\cdots j_k}\quad(0\leq k<n)
\quand
{\rm(ii)}\quad \ibf j_1\cdots j_k\in A\quad(0\leq k\leq n).
\ee
By (\ref{Tt}), the set $T(\ibf):=\{t\in[0,1]:\ibf\in\T^t\}$ is closed for each $\ibf\in\T$, so for each finite $n$, the set
\be
P_n:=\big\{t\in[0,1]:\wurz\percol{\T^t\beh\A}\jbf
\mbox{ for some }\jbf\in\T\mbox{ with }|\jbf|=n\big\},
\ee
being a finite intersection and union of sets of the form $T(\ibf)$, is also closed. It follows that the same is true for $P=\bigcap_{n\geq 0}P_n$.
\epro

\bpro[of Lemma~\ref{L:FY}]
By Lemma~\ref{L:perctime}, for each $\ibf\in\T$, the set
\[
T(\ibf):=\big\{t\in[0,1]:\ibf\percol{\T^t\beh\F}\infty\big\}
\]
is a random closed subset of $[0,1]$. By Lemma~\ref{L:perc} below, $\P[\ibf\percol{\T^t\beh\F}\infty]\leq\P[\ibf\percol{\T^t}\infty]=t$ for all $t\in(0,1]$, so there a.s.\ exists a random $\eps>0$ such that $T(\ibf)\sub[\eps,1]$, i.e., $T(\ibf)$ is a random compact subset of $(0,1]$. Since $\Xi$ is a closed subset of $(0,1]$ this implies that on the event that $Y_\ibf\leq 1$, the infimum in (\ref{YF}) is in fact a minimum and $\ibf\percol{\T^t\beh\F}\infty$ for $t=Y_\ibf$.

Let $\ibf\in\I$. If $Y_{\ibf 1}>\tau_\ibf$, then clearly there exists no $t\in(0,\tau_\ibf]$ such that $\ibf 1\percol{\T^t\beh\F}\infty$, and hence by (\ref{frozdef}) $\ibf\not\in\F$. On the other hand, if $Y_{\ibf 1}\leq\tau_\ibf$, then by what we have just proved, setting $t:=Y_{\ibf 1}$ we have $t\in\Xi\cap(0,\tau_\ibf]$ and $\ibf 1\percol{\T^t\beh\F}\infty$, which by (\ref{frozdef}) shows that $\ibf\in\F$.
\epro

To prepare for the proof of Lemma~\ref{L:burnlaw}, we need a bit of
theory. Let ${\rm BV}$ denote the space of functions $F:\R\to\R$ that are
locally of bounded variation. For each $F\in{\rm BV}$, the right and
left limits $F(t+):=\lim_{s\down t}F(s)$ and $F(t-):=\lim_{s\up t}F(s)$ exist
for each $t\in\R$, and $F$ defines a signed measure $\di F$ on $\R$ by any of
the equivalent formulas
\be
\di F\big((s,t]\big)=F(t+)-F(s+)\quad(s<t)\quand
\di F\big([s,t)\big)=F(t-)-F(s-)\quad(s<t).
\ee
For $G,F\in{\rm BV}$, we let $G\di F$ denote the signed measure obtained by
weighting $\di F$ with the density $G$. For $F\in{\rm BV}$, we define
\be
\ov F(t):=\ha\big(F(t-)+F(t+)\big)\qquad(t\in\R).
\ee
It is well-known that a right-continuous function with left limits makes at
most countably many jumps, and hence $\ov F(t)\neq F(t)$ for at most countably
many values of $t$. We will need the following simple fact.

\bl[Product rule]
For\label{L:prodrul} $F,G\in{\rm BV}$, one has $FG\in{\rm BV}$ and
$\di(FG)=\ov F\di G+\ov G\di F$.
\el

\bpro
The statement is well-known if $F$ and $G$ are continuous. Therefore, since
our formula is linear in $F$ and $G$ and since each measure can be decomposed
into an atomic and nonatomic part, it suffices to prove the statement only
when $\di F$ and $\di G$ are purely atomic. Using again linearity and a simple
limit argument, it suffices to prove the statement only in the case that $\di
F=\de_s$ and $\di G=\de_t$ for some $s,t\in\R$. If $s\neq t$, the statement is
trivial. If $s=t$, then the statement follows from the observation that
\bc
\dis F(t+)G(t+)-F(t-)G(t-)
&=&\dis\ha\big(F(t+)+F(t-)\big)\big(G(t+)-G(t-)\big)\\[5pt]
&&\dis+\ha\big(G(t+)+G(t-)\big)\big(F(t+)-F(t-)\big).
\ec
\epro

We cite the following lemma from \cite[Lemma~38]{RST19}.

\bl[Integral formulation of RDE]
A\label{L:RDEint} probability measure $\rho$ on $I$ solves the RDE
(\ref{uni_RDE}) if and only if
\be\label{RDEint}
\int_{[0,t]}\rho(\di s)s=\rho\big([0,t]\big)^2\qquad\big(t\in[0,1]\big).
\ee
\el

The following lemma is just a simple rewrite of the previous one. Below, we
let $\mu\big|_A$ denote the restriction of a (signed) measure $\mu$ to a
measurable set $A$, defined as $\mu\big|_A(B):=\mu(A\cap B)$.

\bl[Differential formulation of RDE]
Let\label{L:RDEdif} $T$ denote the identity function $T(t):=t$ $(t\in\R)$. Assume that $F\in{\rm BV}$ is right-continuous and nondecreasing and satisfies $F(t)=0$ $(t<0)$, $F(t)=F(1)$ $(t>1)$, and
\be\label{RDEdif}
T\di F=2\ov F\di F.
\ee
Then there exists a unique solution $\rho$ to the RDE (\ref{uni_RDE})
such that
\be\label{rhoF}
\rho\big([0,t]\big)=F(t)\qquad\big(t\in[0,1]\big),
\ee
and each solution $\rho$ to the RDE (\ref{uni_RDE}) arises in this way.
\el

\bpro
Let $\rho$ be a solution of the RDE (\ref{uni_RDE}) and let $F:[0,1]\to\R$ be defined as in (\ref{rhoF}). Extend $F$ to a function in ${\rm BV}$ by setting $F(t):=0$ for $t<0$ and $F(t):=F(1)$ for $t>1$. Then by Lemma~\ref{L:RDEint}, $\int_{(0,t]}T\di F=F(t)^2$ $(t\in[0,1])$, which by Lemma~\ref{L:prodrul} implies that $F$ solves (\ref{RDEdif}).

Assume, conversely, that $F\in{\rm BV}$ is right-continuous and nondecreasing and satisfies $F(t)=0$ $(t<0)$ and (\ref{RDEdif}). Then clearly $F\geq 0$. Formula (\ref{RDEdif}) implies that for a.e.\ $t$ w.r.t.\ $\di F$, we have $\ov F(t)=\ha t$, which by the fact that $F\geq 0$ implies $F(t)\leq t$. It follows that setting $\rho([0,t]):=F(t)$ $(t\in[0,1])$ defines a subprobability measure on $[0,1]$, which can uniquely be extended to a probability measure on $[0,1]\cup\{\infty\}$. Lemma~\ref{L:prodrul} implies that $\int_{(0,t]}T\di F=F(t)^2$ $(t\in[0,1])$, so using the fact that $\rho(\{0\})=F(0)-F(0-)=0$ and Lemma~\ref{L:RDEint}, we conclude that $\rho$ solves the RDE (\ref{uni_RDE}).
\epro

Let $T\in{\rm BV}$ denote the identity function $T(t):=t$ $(t\in\R)$. For a given closed set $\Xi\sub\R$, we will be interested in right-continuous functions $F\in{\rm BV}$ that solve the differential equation
\be\label{RDEdif2}
{\rm(i)}\ T\di F=2\ov F\di F,\quad{\rm(ii)}\ \di F\big|_\Xi=\di F
\quad{\rm(iii)}\ F(t)\geq\ha t\quad(t\in\Xi).
\ee
Note that condition~(ii) says that the signed measure $\di F$ is concentrated on $\Xi$. Our first lemma says that the distance between two solutions of (\ref{RDEdif2}) is a nonincreasing function of time.

\bl[Distance between two solutions]
Let\label{L:Fdist} $\Xi\sub\R$ be closed and for
$i=1,2$, let $F_i\in{\rm BV}$ be right-continuous solutions to
the differential equation (\ref{RDEdif2}).  Then
$\big|F_1(t)-F_2(t)\big|\leq\big|F_1(s)-F_2(s)\big|$ $(s\leq t)$.
\el

\bpro
We observe that by (i), we have $\ov F_i(t)=\ha t$ for a.e.\ $t$ w.r.t.\ $\di
F_i$. In particular, $\ov F_i(t)=\ha t$ whenever $F_i(t-)\neq F_i(t)$, which
we can combine with condition~(iii) to get
\[
\mbox{(iii)'}\;\ \ov F_i(t)\geq\ha t\quad(t\in\Xi).
\]
We now use Lemma~\ref{L:prodrul} to calculate
\be\ba{l}\label{diF}
\ha\di(F_1-F_2)^2=(\ov F_1-\ov F_2)(\di F_1-\di F_2)\\[5pt]
\dis\quad=\ov F_1\di F_1-\ov F_1\di F_2-\ov F_2\di F_1+\ov F_2\di F_2
=(\ha T-\ov F_2)\di F_1+(\ha T-\ov F_1)\di F_2,
\ec
where in the last step we have used (i). Using moreover
(ii) and (iii)', we see that the right-hand side of (\ref{diF}) is nonpositive,
so the claim of the lemma follows by integration.
\epro

Let $\Fi$ denote the space of all right-continuous, nondecreasing functions $F:\R\to\R$ that satisfy $0\leq F(t)\leq 0\vee t$ $(t\in\R)$. In other words, these are the distribution functions of nonnegative measures $\di F$ on $\half$ that satisfy $\di F([0,t])\leq t$ for all $t\geq 0$. We equip $\Fi$ with a topology that corresponds to vague convergence of the measures $\di F$. Then $\Fi$ is a compact, metrisable space and $F_n\to F$ in the topology on $\Fi$ if and only if $F_n(t)\to F(t)$ for each continuity point $t$ of $F$. Our aim is to prove that each closed set $\Xi\sub\half$, there exists a unique $F\in\Fi$ that solves (\ref{RDEdif2}). Uniqueness follows from Lemma~\ref{L:Fdist}, so it remains to prove existence. We will use an approximation argument. We start by proving the statement for finite $\Xi$.

\bl[Finite sets]
For\label{L:finex} each finite set $\Xi\sub(0,\infty)$, there exists an $F\in\Fi$ that solves (\ref{RDEdif2}).
\el

\bpro
Let $\Xi=\{t_1,\ldots,t_n\}$ with $0=:t_0<t_1<\cdots<t_n$. We inductively define $F$ so that it is constant on each of the intervals $(-\infty,t_1)$, $[t_1,t_2)$,\ldots $[t_{n-1},t_n)$, and $[t_n,\infty)$, satisfies $F(0)=0$, and
\be\label{Fkind}
F(t_k):=F(t_{k-1})\vee\big(t_k-F(t_{k-1})\big)\qquad(1\leq k\leq n).
\ee
Note that the average of $F(t_{k-1})$ and $t_k-F(t_{k-1})$ is $\ha t_k$, so their maximum is $\geq\ha t_k$. In view of this, $F$ clearly satisfies (\ref{RDEdif2})~(ii) and (iii). Moreover, for each $1\leq k\leq n$, we have either $F(t_k)=F(t_{k-1})$ or $F(t_k)=t_k-F(t_{k-1})$. In either case,
\be
t_k\big(F(t_k)-F(t_{k-1})\big)=2\cdot\ha\big(F(t_k)+F(t_{k-1})\big)\big(F(t_k)-F(t_{k-1})\big),
\ee
which shows that $F$ satisfies (\ref{RDEdif2})~(i). It is clear that $F$ is right-continuous, nonnegative, and nondecreasing, and by induction (\ref{Fkind}) also implies that $F(t)\leq t$ for all $t\geq 0$, showing that $F\in\Fi$.
\epro

Let $d$ be any metric generating the topology on $[0,\infty]$ and let $\Ki[0,\infty]$ denote the space of all closed subsets of $[0,\infty]$. For each $A\in\Ki[0,\infty]$ and $\eps>0$, we set
\be\label{Hau1}
A_\eps:=\big\{t\in[0,\infty]:d(t,A)<\eps\big\}
\quad\mbox{where}\quad
d(t,A):=\inf_{s\in A}d(t,s).
\ee
We equip $\Ki[0,\infty]$ with the \emph{Hausdorff metric}
\be\label{Hau2}
d_{\rm H}(A,B):=\inf\big\{\eps>0:A\sub B_\eps\mbox{ and }B\sub A_\eps\big\}.
\ee
By \cite[Lemma~B.1]{SSS14}, the topology generated by $d_{\rm H}$ does not depend on the choice of the metric $d$ generating the topology on $[0,\infty]$. The following lemma lists some elementary properties of the space $\Ki[0,\infty]$.

\bl[Properties of the Hausdorff metric]
The\label{L:Haus} space $\Ki[0,\infty]$ is compact and the set of all finite subsets of $(0,\infty)$ is dense in $\Ki[0,\infty]$.
\el

\bpro
Since $[0,\infty]$ is homeomorphic to $[0,1]$, we may equivalently show that $\Ki[0,1]$ is compact and the set of all finite subsets of $(0,1)$ is dense in $\Ki[0,1]$, where the Hausdorff metric on $\Ki[0,1]$ is defined in the same way as in (\ref{Hau1})--(\ref{Hau2}), with $d(x,y):=|x-y|$ the usual metric on $[0,1]$. The fact that $\Ki(E)$ is compact if $E$ is compact is well-known, see, e.g., \cite[Lemma~B.4]{SSS14}. If $\Xi\sub[0,1]$ is closed, then it is easy to see that the sets $\Xi_n:=\{k/n:1<k<n,\ d(k/n,\Xi)\leq 1/n\}$ converge to $\Xi$ in the Hausdorff metric. This shows that the set of finite subsets of $(0,1)$ is dense in $\Ki[0,1]$.
\epro

Our next lemma will allow us to construct solutions to (\ref{RDEdif2}) for general $\Xi$ by approximation with finite $\Xi$.

\bl[Limits of solutions]
Let\label{L:limsol} $F,F_n\in\Fi$ and $\Xi_n,\Xi\in\Ki[0,\infty]$ satisfy $F_n\to F$ and $\Xi_n\to\Xi$. Assume that $F_n$ solves (\ref{RDEdif2}) relative to $\Xi_n\cap\half$ for each $n$. Then $F$ solves (\ref{RDEdif2}) relative to $\Xi\cap\half$.
\el

\bpro
Recall that $F_n\to F$ means that $\di F_n\to\di F$ vaguely, or equivalently, $F_n(t)\to F(t)$ for each continuity point $t$ of $F$. Since $T$ is a continuous function, the vague convergence $\di F_n\to\di F$ implies that also $T\di F_n\to T\di F$ vaguely. By Lemma~\ref{L:prodrul}, $2\ov F\di F=\di F^2$. Now if $F_n(t)\to F(t)$ for each continuity point $t$ of $F$, then also  $F^2_n(t)\to F^2(t)$ for each continuity point $t$ of $F^2$, so taking the value limit on the left- and right-hand sides of the equation, we see that $F$ solves (\ref{RDEdif2}) ~(i). Since $\Xi_n\to\Xi$, we easily obtain that $\di F$ is concentrated on $\Xi_\eps$ for each $\eps>0$,  and hence $F$ satisfies (\ref{RDEdif2}) ~(ii). To see that $F$ also satisfies (\ref{RDEdif2}) ~(iii), fix $t\in\Xi$. Since $\Xi_n\to\Xi$ we can find $t_n\in\Xi_n$ such that $t_n\to t$. Then for each $s>t$, we have $F_n(s)\geq F_n(t_n)\geq\ha t_n$ for all $n$ large enough. Taking the limit, it follows that $F(s)\geq\ha t$ for each $s\geq t$ that is a continuity point of $F$, and hence $F(t)\geq\ha t$ by right-continuity.
\epro

We can now prove existence of solutions to (\ref{RDEdif2}) for general $\Xi$.

\bl[Existence of solutions to the RDE]
For\label{L:exist} each closed set $\Xi\sub\half$, there exists a function $F\in\Fi$ that solves (\ref{RDEdif2}).
\el

\bpro
By Lemma~\ref{L:Haus}, for each closed $\Xi\sub[0,\infty]$, there exist finite $\Xi_n\sub(0,\infty)$ such that $\Xi_n\to\Xi$. By Lemma~\ref{L:finex}, for each $n$ there exists an $F_n\in\Fi$ so that $F_n$ solves (\ref{RDEdif2}) relative to $\Xi_n$. Since $\Fi$ is compact, by going to a subsequence if necessary we can assume that $F_n\to F$ for some $F\in\Fi$. Then Lemma~\ref{L:limsol} tells us that $F$ solves (\ref{RDEdif2}) relative to $\Xi\cap\half$.
\epro

Before we prove Lemma~\ref{L:burnlaw}, we recall the general definition of an
RTP. Let $\T$ denote the space of all finite words $\ibf=i_1\cdots i_n$
$(n\geq 0)$ made up from the alphabet $\{1,\ldots,d\}$, where $d\geq 1$ is
some fixed integer. All previous notation involving the binary tree
generalizes in a straightforward manner to the $d$-ary tree $\T$. Let $I$ and
$\Om$ be Polish spaces, let $\ga:\Om\times I^d\to I$ be a measurable function,
and let $(\om_\ibf)_{\ibf\in\T}$ be i.i.d.\ $\Om$-valued random variables. Let
$\nu$ be a probability law on $I$ that solves the Recursive Distributional
Equation (RDE)
\be\label{RDE}
X_\wurz\isd\ga[\om_\wurz](X_1,\ldots,X_d),
\ee
where $\isd$ denotes equality in distribution, $X_\wurz$ has law $\nu$, and
$X_1,\dots, X_d$ are copies of $X_\wurz$, independent of each other and of
$\om_\wurz$. A simple argument based on Kolmogorov's extension theorem (see
\cite[Lemma~1.9]{MSS20}) tells us that the i.i.d.\ random variables
$(\om_\ibf)_{\ibf\in\T}$ can be coupled to $I$-valued random
variables $(X_\ibf)_{\ibf\in\T}$ in such a way that:
\begin{enumerate}
\item For each finite rooted subtree $\U\sub\T$, the r.v.'s
  $(X_\ibf)_{\ibf\in\pa\U}$ are i.i.d.\ with common law $\nu$ and independent
  of $(\om_\ibf)_{\ibf\in\U}$.
\item $\dis X_\ibf=\ga[\om_\ibf](X_{\ibf 1},\ldots,X_{\ibf d})\qquad(\ibf\in\T)$.
\end{enumerate}
Moreover, these conditions uniquely determine the joint law of
$(\om_\ibf,X_\ibf)_{\ibf\in\T}$. We call the latter the
\emph{Recursive Tree Process} (RTP) corresponding to the maps $\ga$ and
solution $\nu$ of the RDE (\ref{RDE}).\med

\bpro[of Lemma~\ref{L:burnlaw}]
Let $\Xi\sub(0,1]$ be relatively closed and let $\ov\Xi:=\Xi\cup\{0\}$. By Lemma~ \ref{L:exist}, there exists a solution $F\in\Fi$ of the differential equation (\ref{RDEdif2}) relative to $\ov\Xi$. Set $\rho_\Xi([0,t]):=F(t)$ $(t\in[0,1])$ and $\rho_\Xi(\{\infty\}):=1-F(1)$ (which is $\geq 0$ since $F(1)\leq 1$ by the definition below (\ref{diF}) of the class $\Fi$) and observe that $\rho_\Xi(\{0\})=0$. Then $\rho_\Xi$ is a probability measure on $I$ that satisfies conditions (ii) and (iii) of Lemma~\ref{L:burnlaw}, and by Lemma~\ref{L:RDEdif} also condition~(i). Assume, conversely, that $\rho_\Xi$ satisfies conditions (i)--(iii) of Lemma~\ref{L:burnlaw}, and set $F(t):=\rho_\Xi([0,t])$ $(t\in[0,1])$, $F(t):=0$ $(t<0)$, $F(t):=F(1)$ $(t>1)$. Then by Lemma~\ref{L:RDEdif}, $F$ solves the differential equation (\ref{RDEdif2}) subject to the initial condition $F(t):=0$ $(t<0)$. By Lemma~\ref{L:Fdist}, these conditions uniquely determine $F$ and hence also $\rho_\Xi$.

Assume that $\F$ solves the frozen percolation equation (\ref{frozdef}) for
the set of possible freezing times $\Xi$ and that $\F$ is stationary, adapted,
and respects the tree structure. Generalising (\ref{YF}), for any $\De\sub(0,1]$
that is relatively closed, we set
\be\label{YFwhole}
Y_\ibf^\De:=\inf\big\{t\in\De:
\ibf\percol{\T^t\beh\F}\infty\big\}\qquad(\ibf\in\T).
\ee
Then in particular,  $Y_\ibf^{\Xi}$ is the burning time $Y_\ibf$ defined in (\ref{YF}). As in (\ref{Om}), we write $\om_\ibf=(\tau_\ibf,\kappa_\ibf)$ $(\ibf\in\T)$.Since $\F$ is stationary, adapted, and respects the tree structure, the random
variables $(\om_\ibf,Y^\Xi_\ibf)_{\ibf\in\T}$ form an RTP corresponding to the
map $\chi$ in (\ref{chi_def}) and some solution $\rho_\Xi$ to the RDE
(\ref{uni_RDE}). To complete the proof, we need to show that $\rho_\Xi$
also satisfies conditions (ii) and (iii) of Lemma~\ref{L:burnlaw}.
Since $\rho_\Xi$ is the law of $Y^\Xi_\ibf$ $(\ibf\in\T)$,
it clearly satisfies condition~(ii) of Lemma~\ref{L:burnlaw}. To also
prove (iii), we use that by Lemma~\ref{L:FY}, we have
$\F=\big\{\ibf\in\I:Y^\Xi_{\ibf 1}\leq\tau_\ibf\big\}$,
which allows us to  apply \cite[Prop.~39]{RST19}, which tells us that
\be
\P\big[Y^{(0,1]}_\ibf\leq t\big]=F(t)\vee\big(t-F(t)\big)
\qquad\big(\ibf\in\T,\ t\in[0,1]\big),
\ee
where $F(t):=\rho_\Xi\big([0,t]\big)$ $\big(t\in[0,1]\big)$. Since
\be\label{YXiY}
Y^\Xi_\ibf=\inf\big\{t\in\Xi:t\geq Y^{(0,1]}_\ibf\big\}\qquad(\ibf\in\T),
\ee
it follows that
\be\label{FtF}
F(t)=\rho_\Xi\big([0,t]\big)=\P\big[Y^\Xi_\ibf\leq t\big]
=\P\big[Y^{(0,1]}_\ibf\leq t\big]=F(t)\vee\big(t-F(t)\big)
\qquad(t\in\Xi),
\ee
where the two probabilities are equal by (\ref{YXiY}) and the fact that $t\in\Xi$. This proves that $\rho_\Xi$ satisfies condition~(iii) of Lemma~\ref{L:burnlaw}.
\epro

\bpro[of Lemma~\ref{L:genRDE}]
If $\rho$ solves the RDE (\ref{uni_RDE}), then by Lemma~\ref{L:RDEdif}, the
function $F\in{\rm BV}$ defined in (\ref{rhoF}) is right-continuous and
nondecreasing with $F(0)=0$ and satisfies (\ref{RDEdif}). Let
$\Xi:={\rm supp}(\di F)\cap(0,1]$. Then (\ref{RDEdif}) implies that
$\ov F(t)=\ha t$ for a.e.\ $t$ w.r.t.\ $\di F$. Since $F$ is right-continuous
with left limits, this implies that $\ov F(t)=\ha t$ for all $t\in\Xi$, and
hence $F(t)\geq\ha t$ for all $t\in\Xi$. It follows that $\rho$ satisfies
conditions (i)--(iii) of Lemma~\ref{L:burnlaw} and hence $\rho=\rho_\Xi$.
\epro

The following lemma settles the existence part of Theorem~\ref{T:frozen}.

\bl[Frozen points]
Let\label{L:YFY} $\Xi\sub(0,1]$ be closed w.r.t.\ the relative topology of
$(0,1]$, let $(\om_\ibf,Y_\ibf)_{\ibf\in\T}$ be the RTP corresponding to the
solution $\rho_\Xi$ to the RDE (\ref{uni_RDE}) defined in Lemma~\ref{L:burnlaw},
and let $\F$ be defined by (\ref{FY}). Then $\F$ solves the frozen
percolation equation (\ref{frozdef}) for the set of possible freezing times
$\Xi$ and $\F$ is stationary, adapted, and respects the tree structure.
Moreover, the $Y_\ibf$ are given by (\ref{YF}).
\el

\bpro
It follows from the properties of an RTP that $\F$, defined by (\ref{FY}), is
stationary, adapted, and respects the tree structure. The inductive relation
(\ref{Yind}) implies that if $Y_\ibf<\infty$, then there exist
$(j_k)_{k\geq 1}$ such that $\ibf j_1\cdots j_n$ is a legal descendant of
$\ibf j_1\cdots j_{n-1}$ and $Y_\ibf=Y_{\ibf j_1\cdots j_n}$ for all $n\geq
1$. For all $n\geq 0$ such that $\kappa_{\ibf j_1\cdots j_n}=1$, the fact that
$Y_{\ibf j_1\cdots j_n}<\infty$ and (\ref{Yind}) moreover imply that
$Y_{\ibf j_1\cdots j_n1}>\tau_\ibf$. Therefore, we have that
\be
\ibf\percol{\T^t\beh\F}\infty\quad\mbox{if }t=Y_\ibf<\infty.
\ee
Since $Y_\ibf$ takes values in $\Xi\cup\{\infty\}$, it follows that
\be\label{Ygeqinf}
Y_\ibf\geq\inf\big\{t\in\Xi:
\ibf\percol{\T^t\beh\F}\infty\big\}\qquad(\ibf\in\T).
\ee
To prove that this is actually an equality, let $Y'_\ibf$ denote the
right-hand side of (\ref{Ygeqinf}). Since $\F$ is
stationary, adapted, and respects the tree structure, as pointed out
in Section~\ref{S:burn}, the random variables
$(\om_\ibf,Y'_\ibf)_{\ibf\in\T}$ form an RTP corresponding to the map $\chi$ in
(\ref{chi_def}) and some solution $\rho$ to the RDE (\ref{uni_RDE}). By
Lemma~\ref{L:burnlaw}, $\rho=\rho_\Xi$, so $Y'_\ibf$ and $Y_\ibf$ are equal in
law, which by (\ref{Ygeqinf}) implies that they are a.s.\ equal.
This proves that the $Y_\ibf$ are given by (\ref{YF}). By assumption,
$\F$ is defined by (\ref{FY}). Inserting
(\ref{YF}) into (\ref{FY}), we see that $\F$ solves the
frozen percolation equation (\ref{frozdef}).
\epro

\bpro[of Theorem~\ref{T:frozen}]
Lemma~\ref{L:YFY} proves existence of a solution $\F$ of the frozen
percolation equation (\ref{frozdef}) for the set of possible freezing times
$\Xi$ that is stationary, adapted, and respects the tree structure.
It remains to prove uniqueness in law. Set $\om_\ibf:=(\tau_\ibf,\kappa_\ibf)$
$(\ibf\in\T)$ and let $\Om=(\om_\ibf)_{\ibf\in\T}$. Let $Y_\ibf$ $(\ibf\in\T)$
be the burning times defined in (\ref{YF}). Since $\F$ is stationary, adapted,
and respects the tree structure, as pointed out
in Section~\ref{S:burn}, the random variables
$(\om_\ibf,Y_\ibf)_{\ibf\in\T}$ form an RTP corresponding to the map $\chi$ in
(\ref{chi_def}) and some solution $\rho$ to the RDE (\ref{uni_RDE}). By
Lemma~\ref{L:burnlaw}, $\rho=\rho_\Xi$, and hence by
\cite[Lemma~1.9]{MSS20} the law of $(\om_\ibf,Y_\ibf)_{\ibf\in\T}$ is uniquely
determined. By Lemma~\ref{L:FY}, this implies that the joint law of
$(\Om,\F)$ is also uniquely determined.
\epro

\subsection{Scale invariance}\label{S:scaleprf}

In this subsection, we prove Lemmas \ref{L:scaleRDE}, \ref{L:scalebivRDE}, and \ref{L:scalenu} about invariance of solutions of the (bivariate) RDE under the scaling maps $\Ga_t$ and $\Ga^{(2)}_t$. We will generalise a bit and define scaling maps $\Ga^{(n)}_t$ for any $1\leq n\leq\infty$, where the case $n=\infty$ will play an important role in the proof of  Lemma~\ref{L:scalenu}.

Recall the definition of the scaling maps $\psi_t:I\to I$ $(t>0)$ in (\ref{psit}). For $t>0$, we define a cut-off map $c_t:I\to I$ by
\be\label{ct}
c_t(y):=\left\{\ba{ll}
y\quad&\mbox{if }y\leq t,\\[5pt]
\infty\quad&\mbox{otherwise.}
\ea\right.
\ee
Note that $c_t$ is the identity map when $t\geq 1$. It is easy to check that
\be\label{psic}
\psi_{1/t}\circ\psi_t=c_t\qquad(t>0).
\ee
For $1\leq n<\infty$, we write $[n]:=\{1,\ldots,n\}$ and we set $[\infty]:=\N_+$. We denote a generic element of $I^n$ by $\vec y=(y^k)_{k\in[n]}$ and we define $\psi^{(n)}_t:I^n\to I^n$ and $c^{(n)}_t:I^n\to I^n$ in a coordinatewise way by $\psi^{(n)}_t(\vec y):=\big(\psi_t(y^k)\big)_{k\in[n]}$ and $c^{(n)}_t(\vec y):=\big(c_t(y^k)\big)_{k\in[n]}$.

We say that a probability measure on $I^n$ is \emph{symmetric} if it is invariant under a permutation of the coordinates. Generalising the definitions of $\Mi^{(1)}$ and $\Mi^{(2)}$ in Subsection~\ref{S:scale}, for any $0<t\leq 1$ and $1\leq n\leq\infty$, we let $\Mi^{(n)}$ denote the space of symmetric probability measures $\rho^{(n)}$ on $I^n$ such that
\be\label{infyk}
\rho^{(n)}\big(J^n[t]\big)\leq t\qquad(0<t\leq 1)
\quad\mbox{with}\quad
J^n[t]:=\big\{\vec y\in I^n:\exists k\in[n]\mbox{ s.t.\ }y^k\leq t\big\}.
\ee
Note that $J^n[1]=I^n\beh\{\vec\infty\}$, where $\vec\infty$ denotes the element $\vec y\in I^n$ with $y^k:=\infty$ for all $k\in[n]$. Generalising the definitions of $\Ga_t^{(1)}$ and $\Ga_t^{(2)}$ in Subsection~\ref{S:scale}, for each $1\leq n\leq\infty$ and $t>0$, we define
\be\label{Gan}
\Ga^{(n)}_t\rho^{(n)}:=t^{-1}\rho^{(n)}\circ(\psi^{(n)}_t)^{-1}
+(1-t^{-1})\de_{\vec\infty}\qquad\big(\rho^{(n)}\in\Mi^{(n)}).
\ee
We also define cut-off maps $C^{(n)}_t$ by
\be\label{Can}
C^{(n)}_t\rho^{(n)}:=\rho^{(n)}\circ(c^{(n)}_t)^{-1}\qquad\big(\rho^{(n)}\in\Mi^{(n)})
\ee
and in particular set $C_t:=C^{(1)}_t$. Finally,  for all $1\leq n\leq\infty$, we define a map $T^{(n)}$ acting on probability measures on $I^n$ by
\be\label{Tndef}
T^{(n)}\rho^{(n)}:=
\mbox{ the law of }\big(\chi[\om](Y^k_1,Y^k_2)\big)_{k\in[n]},
\ee
where $(Y^k_1)_{k\in[n]}$ and $(Y^k_2)_{k\in[n]}$ are independent random variables with law $\rho^{(n)}$, and $\om$ is an independent random variable that is uniformly distributed on $[0,1]\times\{1,2\}$. We call the equation
\be\label{nvar}
T^{(n)}\rho^{(n)}=\rho^{(n)}
\ee
the \emph{$n$-variate RDE}. In particular, for $n=1$ this is the RDE (\ref{uni_RDE}) and for $n=2$ this is bivariate RDE (\ref{bivar_RDE}). The following lemma, which will be proved below, shows that all these maps are well-defined on the space $\Mi^{(n)}$.

\bl[Maps are well-defined]
For\label{L:mapdef} each $1\leq n\leq\infty$ and $t>0$, the maps $\Ga^{(n)}_t$, $C^{(n)}_t$, and $T^{(n)}$ map the space $\Mi^{(n)}$ into itself.
\el

The following lemma says that as long as we are interested in symmetric solutions of the $n$-variate RDE (\ref{nvar}), it suffices to look for solutions in the space $\Mi^{(n)}$.

\bl[Solutions to the RDE are scalable]
If\label{L:scalable} a symmetric probability measure $\rho^{(n)}$ on $I^n$ solves the $n$-variate RDE (\ref{nvar}), then $\rho^{(n)}\in\Mi^{(n)}$.
\el

The following lemma is the central result of this subsection.

\bl[Commutation relation]
For\label{L:commut} each $1\leq n\leq\infty$, one has
\be\label{commut}
\Ga^{(n)}_tT^{(n)}\rho^{(n)}=tT^{(n)}\Ga^{(n)}_t\rho^{(n)}
+(1-t)\Ga^{(n)}_t\rho^{(n)}
\qquad(t>0,\ \rho^{(n)}\in\Mi^{(n)}).
\ee
\el

We first show how Lemmas \ref{L:mapdef}--\ref{L:commut} imply Lemmas \ref{L:scaleRDE} and \ref{L:scalebivRDE}, and then prove Lemmas \ref{L:mapdef}--\ref{L:commut}. We start by proving a more general statement.\med

\bl[Scale invariance of $n$-variate RDE]
Let\label{L:scalenRDE} $1\leq n\leq\infty$ and let $\rho^{(n)}$ be a
symmetric solution to the $n$-variate RDE (\ref{nvar}). Then
$\rho^{(n)}\in\Mi^{(n)}$, and for each $t>0$, the measure $\Ga^{(n)}_t\rho^{(n)}$
is also a solution to (\ref{nvar}).
\el

\bpro
Let $1\leq n\leq\infty$ and let $\rho^{(n)}$ be a symmetric solution
to the $n$-variate RDE (\ref{nvar}). Then $\rho^{(n)}\in\Mi^{(n)}$
by Lemma~\ref{L:scalable}. Moreover, for each $t>0$, Lemma~\ref{L:commut}
and the fact that $T^{(n)}\rho^{(n)}=\rho^{(n)}$ imply that
\be
\Ga^{(n)}_t\rho^{(n)}=tT^{(n)}\Ga^{(n)}_t\rho^{(n)}
+(1-t)\Ga^{(n)}_t\rho^{(n)}
\ee
which shows that $T^{(n)}\Ga^{(n)}_t\rho^{(n)}=\Ga^{(n)}_t\rho^{(n)}$, i.e.,
the measure $\Ga^{(n)}_t\rho^{(n)}$ solves the $n$-variate RDE (\ref{nvar}).
\epro

\bpro[of Lemmas \ref{L:scaleRDE} and \ref{L:scalebivRDE}]
Most of the statements of Lemmas \ref{L:scaleRDE} and \ref{L:scalebivRDE} follow by specialising Lemma~\ref{L:scalenRDE} to the cases $n=1$ and $n=2$, respectively. Apart from this, we only need to prove (\ref{scaleRDE}). Let $\Xi\sub(0,1]$ be relatively closed and let $\Xi'$ be as in (\ref{scaleRDE}). Then $\Ga_t\rho_\Xi$ solves the RDE (\ref{uni_RDE}) by Lemma~\ref{L:scalenRDE}. Using the definition of $\Ga_t$, it is easy to see that the fact that $\rho_\Xi$ has properties (ii) and (iii) of Lemma~\ref{L:burnlaw} implies that $\Ga_t\rho_\Xi$ has these same properties with $\Xi$ replaced by $\Xi'$, i.e., $\Ga_t\rho_\Xi$ is concentrated on $\Xi'\cup\{\infty\}$, and $\Ga_t\rho_\Xi\big([0,t']\big)\geq\ha t'$ for all $t'\in\Xi'$. Now Lemma~\ref{L:burnlaw} allows us to identify $\Ga_t\rho_\Xi$ as $\rho_{\Xi'}$.
\epro

We now provide the proofs of Lemmas \ref{L:mapdef}--\ref{L:commut}.\med

\bpro[of Lemma~\ref{L:mapdef} (partially)]
Let $\rho^{(n)}\in\Mi^{(n)}$. It is clear that the right-hand side of (\ref{Gan}) defines a signed measure that is symmetric with respect to a permutation of the coordinates and that satisfies $\Ga^{(n)}_t\rho^{(n)}(I^n)=1$. We observe that by (\ref{psit}),
\be\ba{l}
\dis(\psi^{(n)}_t)^{-1}(J^n[s])
=\big\{\vec y\in I^n:\exists k\in[n]\mbox{ s.t.\ }\psi_t(y^k)\leq s\big\}\\[5pt]
\quad\dis=\big\{\vec y\in I^n:\exists k\in[n]\mbox{ s.t.\ }y^k\leq t
\mbox{ and }t^{-1}y^k\leq s\big\}
=\big\{\vec y\in I^n:\exists k\in[n]\mbox{ s.t.\ }y^k\leq st\wedge t\big\},
\ec
and hence
\be\label{MiMi}
\Ga^{(n)}_t\rho^{(n)}(J^n[s])
=t^{-1}\rho^{(n)}\circ(\psi^{(n)}_t)^{-1}(J^n[s])
=t^{-1}\rho^{(n)}(J^n[st\wedge t])\leq t^{-1}(st\wedge t)\leq s.
\ee
for each $t>0$ and $0<s\leq 1$. Applying this with $s=1$ and using the fact that $I^n=J^n[1]\cup\{\vec\infty\}$ we see that $\Ga^{(n)}_t\rho^{(n)}$ is a probability measure. More generally, (\ref{MiMi}) shows that $\Ga^{(n)}_t$ maps the space $\Mi^{(n)}$ into itself.

It is clear that $C^{(n)}_t\rho^{(n)}$, defined in (\ref{Can}) , is a (symmetric) probability measure on $I^n$ whenever $\rho^{(n)}$ is. If moreover $\rho^{(n)}\in\Mi^{(n)}$, then
\be
C^{(n)}_t\rho^{(n)}(J^n[s])
=\rho^{(n)}\circ(c^{(n)}_t)^{-1}(J^n[s])
=\rho^{(n)}(J^n[s\wedge t])\leq s
\ee
for each $t>0$ and $0<s\leq 1$, which shows that $C^{(n)}_t$ maps the space $\Mi^{(n)}$ into itself.

This proves the claims for $\Ga^{(n)}_t$ and $C^{(n)}_t$. We postpone the proof of claim for $T^{(n)}$ until the proof of Lemma~\ref{L:commut}, where it will follow as a side result of the main argument.
\epro

The proof of Lemma~\ref{L:scalable} uses the following simple lemma, which we cite from \cite[Lemma~8]{RST19}.

\bl[Percolation probability]
One\label{L:perc} has
$\dis\P\big[\wurz\percol{\T^t}\infty\big]=t\quad(0\leq t\leq 1)$.
\el

\bpro[of Lemma~\ref{L:scalable}]
We will prove the following, somewhat stronger statement. Let $1\leq
n\leq\infty$ and let $\rho^{(n)}$ be a solution to the $n$-variate RDE
(\ref{nvar}). Then we will show that $\rho^{(n)}$ satisfies
(\ref{infyk}). In particular, if $\rho^{(n)}$ is symmetric, this
implies that $\rho^{(n)}\in\Mi^{(n)}$.

Let $\rho^{(n)}$ be a solution to the $n$-variate RDE (\ref{nvar}) and for
$k\in[n]$, let $\rho_k$ denote the $n$-th marginal of $\rho^{(n)}$. It is
clear from (\ref{nvar}) that $\rho_k$ solves the RDE (\ref{uni_RDE}), so
by Lemma~\ref{L:genRDE}, for each $k\in[n]$, there exists a relatively closed
set $\Xi_k\sub(0,1]$ such that $\rho_k=\rho_{\Xi_k}$. Since $\rho^{(n)}$
solves the $n$-variate RDE (\ref{nvar}), by \cite[Lemma~1.9]{MSS20},
we can construct an $n$-variate RTP
\be
\big(\om_\ibf,\vec Y_\ibf\big)_{\ibf\in\T}
\ee
where $\vec Y_\ibf=(Y^k_\ibf)_{k\in[n]}$ are $I^n$-valued random variables
such that $\vec Y_\ibf$ is inductively given in terms of $\vec Y_{\ibf 1}$ and
$\vec Y_{\ibf 2}$ as in (\ref{nvar}). In particular, for each $k\in[n]$,
$(\om_\ibf,Y^k_\ibf)_{\ibf\in\T}$ is an RTP corresponding to $\rho_k=\rho_{\Xi_k}$.
Let
\be\label{FYk}
\F^k=\big\{\ibf\in\I:Y^k_{\ibf 1}\leq\tau_\ibf\big\}\qquad(k\in[n]).
\ee
Then Lemma~\ref{L:YFY} tells us that
\be\label{YFk}
Y^k_\ibf:=\inf\big\{t\in\Xi:
\ibf\percol{\T^t\beh\F^k}\infty\big\}\qquad(\ibf\in\T,\ k\in[n]),
\ee
with the convention that $\inf\emptyset:=\infty$. Using Lemma~\ref{L:perc}, we
can now estimate
\be
\P\big[\inf_{k\in[n]}Y^k_\wurz\leq t\big]
\leq\P\big[\wurz\percol{\T^t}\infty\big]=t\qquad(0<t\leq 1),
\ee
which proves that $\rho^{(n)}$ satisfies (\ref{infyk}).
\epro

\bpro[of Lemma~\ref{L:commut}]
We will first prove (\ref{commut}) for $0<t\leq 1$, and on the way also
establish that $T^{(n)}$ maps $\Mi^{(n)}$ into itself, which is the missing part
of Lemma~\ref{L:mapdef} that still remains to be proved.
Fix $1\leq n\leq\infty$, $0<t\leq 1$, and $\rho^{(n)}\in\Mi^{(n)}$. Let $\vec
Y=(Y^k)_{k\in[n]}$ be a random variable with law $\rho^{(n)}$. It follows from
(\ref{infyk}) that we can couple $\vec Y$ to a Bernoulli random variable $B$
such that $\P[B=1]=t$ and $\P[B=1\,|\,\inf_{k\in[n]}Y^k\leq t]=1$. More
formally, there exists a probability measure $\mu$ on $I^n\times\{0,1\}$
whose first marginal is $\rho^{(n)}$, whose second marginal is the Bernoulli
distribution with parameter $t$, and that is concentrated on $\{(\vec
y,b):b=1\mbox{ if }\inf_{k\in[n]}y_k\leq t\}$. Such a measure $\mu$ is not
unique, but we fix one from now on. Let $(\vec Y_1,B_1)$ and $(\vec
Y_2,B_2)$ be independent random variables with law $\mu$. Then
\be\label{indB}
{\rm(i)}\quad\P\big[B_i=1\,\big|\,\inf_{k\in[n]}Y^k_i\leq t\big]=1,
\qquad
{\rm(ii)}\quad\P[B_i=1]=t.\qquad(i=1,2).
\ee
Let $\om=(\tau,\kappa)$ be an independent random variable that is uniformly
distributed on $[0,1]\times\{1,2\}$. We define
\be\ba{lr@{\,}c@{\,}l}\label{wurzdef}
\dis{\rm(i)}\quad&
\dis\vec Y_\wurz&:=&\dis\big(\chi[\om](Y^k_1,Y^k_2)\big)_{k\in[n]},\\[5pt]
\dis{\rm(ii)}\quad&\dis B_\wurz&:=&\dis1_{\{\kappa=1\}}1_{\{\tau\leq t\}}B_1
+1_{\{\kappa=2\}}(B_1\vee B_2).
\ec
The second definition is motivated by the heuristic idea that if $B_i$ is the event that the subtree rooted at $i$ percolates at time $t$ (neglecting the freezing), then $B_\wurz$ is the event that the whole tree percolates at time $t$. We claim that
\be\label{Bprop}
{\rm(i)}\quad\P\big[B_\wurz=1\,\big|\,\inf_{k\in[n]}Y^k_\wurz\leq t\big]=1,
\qquad
{\rm(ii)}\quad\P[B_\wurz=1]=t.
\ee
Indeed, if $Y^k_\wurz\leq t$ for some $k$, then by the definition of $\chi$ in
(\ref{chi_def}), a.s.\ either $\kappa=1$ and $\tau<Y^k_1\leq t$, or $\kappa=2$
and $Y^k_1\wedge Y^k_2\leq t$. In either case, it follows that $B_\wurz=1$,
proving part~(i) of (\ref{Bprop}). Part~(ii) follows by writing
\be
\P[B_\wurz=1]=\ha t\P[B_1=1]+\ha\P[B_1\vee B_2=1]=\ha t^2+\ha[1-(1-t)^2]=t.
\ee
We next claim that for each measurable subset $A\sub I^n$,
\be\ba{lr@{\,}c@{\,}l}\label{RGarep}
\dis{\rm(i)}\quad&\dis T^{(n)}\rho^{(n)}(A)
&=&\dis\P\big[\vec Y_\wurz\in A\big],\\[5pt]
\dis{\rm(ii)}\quad&\dis\Ga^{(n)}_t\rho^{(n)}(A)
&=&\dis\P\big[\psi^{(n)}_t(\vec Y_i)\in A\,\big|\,B_i=1\big]\qquad(i=1,2),\\[5pt]
\dis{\rm(iii)}\quad&\dis\Ga^{(n)}_tT^{(n)}\rho^{(n)}(A)
&=&\dis\P\big[\psi^{(n)}_t(\vec Y_\wurz)\in A\,\big|\,B_\wurz=1\big].
\ec
Part~(i) of (\ref{RGarep}) is immediate from (\ref{wurzdef})~(i). Since both sides of the equation are probability measures, it suffices to prove part~(ii) for $A\sub I^n\beh\{\vec\infty\}$. Then $\psi^{(n)}_t(\vec Y_i)\in A$ implies $\inf_{k\in[n]}Y^k_i\leq t$ which by (\ref{indB})~(i) in turn implies $B_i=1$. It follows that $\P[\psi^{(n)}_t(\vec Y_i)\in A]=\P[\psi^{(n)}_t(\vec Y_i)\in A,\ B_i=1]=t\P[\psi^{(n)}_t(\vec Y_i)\in A\,|\,B_i=1]$ $(i=1,2)$. Comparing with the definition of $\Ga^{(n)}_t$ in (\ref{Gan}), we see that part~(ii) holds. Formulas (\ref{RGarep})~(i) and (\ref{Bprop}) imply that
\be
T^{(n)}\rho^{(n)}(J^n[t])
=\P\big[\inf_{k\in[n]}Y^k_\wurz\leq t\big]\leq\P\big[B_\wurz=1\big]=t.
\ee
Since this holds for general $0<t\leq 1$, and since $T^{(n)}$ also clearly
preserves the symmetry of $\rho^{(n)}$, we conclude that $T^{(n)}$ maps the
space $\Mi^{(n)}$ into itself. In particular, this shows that
$\Ga^{(n)}_tT^{(n)}\rho^{(n)}$ is well-defined. Using (\ref{Bprop}), part~(iii) of (\ref{RGarep}) now follows by the same argument as part~(ii), but applied to $\vec Y_\wurz$ which by part~(i) has law $T^{(n)}\rho^{(n)}$. We next prove (\ref{commut}) for $0<t\leq 1$. We set
\be
B_\circ:=1_{\{\kappa=1\}}1_{\{\tau\leq t\}}B_1
+1_{\{\kappa=2\}}(B_1\wedge B_2).
\ee
We claim that for each measurable subset $A\sub I^n$,
\be\label{Bcirc}
{\rm(i)}\quad\P\big[B_\circ=1\big]=t^2,\quad
{\rm(ii)}\quad T^{(n)}\Ga^{(n)}_t\rho^{(n)}(A)
=\P\big[\psi^{(n)}_t(\vec Y_\wurz)\in A\,\big|\,B_\circ=1\big].
\ee
Part~(i) is a consequence of the independence of $\tau,\kappa,B_1$ and $B_2$
which yields $\P[B_\circ=1]=\ha\cdot t\cdot t+\ha\cdot t\cdot t$. To prove
part~(ii), we introduce the function
\be\label{Phidef}
\Phi[s](y):=\left\{\ba{ll}
y\quad&\mbox{if }s<y,\\
\infty\quad&\mbox{if }y\leq s,
\ea\right.\qquad\big(s\in[0,1],\ y\in I\big),
\ee
with the help of which we can write the function $\chi$ from (\ref{chi_def}) as
\be\label{chiPhi}
\chi[\tau,\kappa](y_1,y_2):=\left\{\ba{ll}
\Phi[\tau](y_1)\quad&\mbox{if }\kappa=1,\\[5pt]
y_1\wedge y_2\quad&\mbox{if }\kappa=2.\ea\right.
\ee
We define $\Phi^{(n)}[s](\vec y)$ and $\vec y_1\wedge\vec y_2$ in a
componentwise way, i.e., $\Phi^{(n)}[s](\vec y):=(\Phi[s](y^k))_{k\in[n]}$ and
$\vec y_1\wedge\vec y_2:=(y^k_1\wedge y^k_2)_{k\in[n]}$. Using the facts that
\be\ba{r@{\ }l}\label{psiprop}
{\rm(i)}&\dis\psi_t(\Phi[s](y))=\Phi[t^{-1}s](\psi_t(y))\qquad(s\leq t),\\[5pt]
{\rm(ii)}&\dis\psi_t(y_1\wedge y_2)=\psi_t(y_1)\wedge\psi_t(y_2),
\ea\ee
we can write
\be\ba{l}
\dis\P\big[\psi^{(n)}_t(\vec Y_\wurz)\in A\,\big|\,B_\circ=1\big]\\[5pt]
\dis\quad=\P\big[\kappa=1,\ \psi^{(n)}_t(\Phi^{(n)}[\tau](\vec Y_1))\in A
\,\big|\,B_\circ=1\big]
+\P\big[\kappa=2,\ \psi^{(n)}_t(\vec Y_1\wedge\vec Y_2)\in A
\,\big|\,B_\circ=1\big]\\[5pt]
\dis\quad=\ha\P\big[\Phi^{(n)}[t^{-1}\tau](\psi^{(n)}_t(\vec Y_1))\in A
\,\big|\,\tau\leq t,\ B_1=1\big]\\[5pt]
\dis\quad\phantom{=}
+\ha\P\big[(\psi^{(n)}_t(\vec Y_1)\wedge\psi^{(n)}_t(\vec Y_2))\in A
\,\big|\,B_1=1=B_2\big].
\ec
Using (\ref{RGarep})~(ii) and the fact that conditional on $\tau\leq t$, the random variable $t^{-1}\tau$ is uniformly distributed on $[0,1]$, we can rewrite this as
\be
\ha\P\big[\Phi[\ti\tau](\vec Z_1)\in A\big]
+\ha\P\big[(\vec Z_1\wedge\vec Z_2)\in A\big],
\ee
where $\vec Z_1,\vec Z_2$ are independent with law $\Ga^{(n)}_t(\rho^{(n)})$ and $\ti\tau$ is an independent random variable that is uniformly distributed on $[0,1]$. In view of (\ref{chiPhi}) and the definition of $T^{(n)}$ in (\ref{Tndef}), we arrive at (\ref{Bcirc})~(ii).

Formulas (\ref{Bprop})~(ii), (\ref{RGarep})~(iii), and (\ref{Bcirc}) give,
for any measurable subset $A\sub I^n$,
\bc
\dis t\Ga^{(n)}_tT^{(n)}\rho^{(n)}(A)
&=&\dis\P\big[\psi^{(n)}_t(\vec Y_\wurz)\in A,\ B_\wurz=1\big],\\[5pt]
\dis t^2T^{(n)}\Ga^{(n)}_t\rho^{(n)}(A)
&=&\dis\P\big[\psi^{(n)}_t(\vec Y_\wurz)\in A,\ B_\circ=1\big].
\ec
We observe that the event $\{B_\circ=1\}$ is contained in the event
$\{B_\wurz=1\}$ and the difference of these events is the event that
$\kappa=2$ and precisely one of the random variables $B_1$ and $B_2$ is one.
On the event that $\kappa=2$ by (\ref{psiprop})~(ii) we have $\psi^{(n)}_t(\vec Y_\wurz)=\psi^{(n)}_t(\vec Y_1\wedge\vec Y_2)=\psi^{(n)}_t(\vec Y_1)\wedge\psi^{(n)}_t(\vec Y_2)$. Moreover on the event that $B_i=0$, by (\ref{indB})~(i) we have $\inf_{k\in[n]}Y^k_i>t$ and hence $\psi^{(n)}_t(\vec Y_i)=\vec\infty$. In view of this, for any measurable subset $A\sub I^n$,
\be\ba{l}
\dis t\Ga^{(n)}_tT^{(n)}\rho^{(n)}(A)-t^2T^{(n)}\Ga^{(n)}_t\rho^{(n)}(A)\\[5pt]
\dis\quad=\P\big[\psi^{(n)}_t(\vec Y_1)\in A,\ \kappa=2,\ B_1=1,\ B_2=0\big]
+\P\big[\psi^{(n)}_t(\vec Y_2)\in A,\ \kappa=2,\ B_1=0,\ B_2=1\big]\\[5pt]
\dis\quad=\ha t(1-t)\P\big[\psi^{(n)}_t(\vec Y_1)\in A\,\big|\,B_1=1\big]
+\ha t(1-t)\P\big[\psi^{(n)}_t(\vec Y_2)\in A\,\big|\,B_2=1\big]\\[5pt]
\dis\quad= t(1-t)\Ga^{(n)}_t\rho^{(n)}(A),
\ec
where in the last step we have used (\ref{RGarep})~(ii). Dividing by $t$, we see
that (\ref{commut}) holds for $0<t\leq 1$.

To derive (\ref{commut}) also for $t\geq 1$, replacing $t$ by $1/t$, we may equivalently show that
\be\label{commut2}
\Ga^{(n)}_{1/t}T^{(n)}\rho^{(n)}=t^{-1}T^{(n)}\Ga^{(n)}_{1/t}\rho^{(n)}
+(1-t^{-1})\Ga^{(n)}_{1/t}\rho^{(n)}
\qquad(0<t\leq 1,\ \rho^{(n)}\in\Mi^{(n)}).
\ee
For $0<t\leq 1$, we set
\be\label{Mint}
\Mi^{(n)}[t]:=\big\{\rho^{(n)}\in\Mi^{(n)}:\rho^{(n)}
\mbox{ is concentrated on }I^n_t\big\}
\quad\mbox{with}\quad I_t:=[0,t]\cup\{\infty\}.
\ee
We observe that $\psi_t:I_t\to I$ is a bijection and $\psi_{1/t}$ is its inverse. As a result, $\Ga^{(n)}_t:\Mi^{(n)}[t]\to\Mi^{(n)}$ is a bijection and $\Ga^{(n)}_{1/t}$ is its inverse. Using this and applying (\ref{commut}) for $0<t\leq 1$ to the measure $\Ga^{(n)}_{1/t}\rho^{(n)}$, we conclude that
\bc\label{invcal}
\dis\Ga^{(n)}_tT^{(n)}\Ga^{(n)}_{1/t}\rho^{(n)}
&=&\dis tT^{(n)}\Ga^{(n)}_t\Ga^{(n)}_{1/t}\rho^{(n)}
+(1-t)\Ga^{(n)}_t\Ga^{(n)}_{1/t}\rho^{(n)}\\[5pt]
&=&\dis tT^{(n)}\rho^{(n)}+(1-t)\rho^{(n)}.
\ec
By our earlier remarks, we have $\Ga^{(n)}_{1/t}\rho^{(n)}\in\Mi^{(n)}[t]$,
which is easily seen to imply that also
$T^{(n)}\Ga^{(n)}_{1/t}\rho^{(n)}\in\Mi^{(n)}[t]$.
Using this, we can apply $\Ga^{(n)}_{1/t}$ from the left to (\ref{invcal})
and multiply by $t^{-1}$ to obtain
\be
t^{-1}T^{(n)}\Ga^{(n)}_{1/t}\rho^{(n)}
=\Ga^{(n)}_{1/t}T^{(n)}\rho^{(n)}+(t^{-1}-1)\Ga^{(n)}_{1/t}\rho^{(n)},
\ee
which proves (\ref{commut2}).
\epro

The rest of this subsection is devoted to the proof of Lemma~\ref{L:scalenu}. The maps $\Ga^{(\infty)}_t$, $C^{(\infty)}_t$, and $T^{(\infty)}$ will play an important role in the proof. Symmetric probability measures on $I^\infty$ are also known as exchangeable probability measures. We will use De Finetti's theorem to associate the space $\Mi^{(\infty)}$ with a subspace $\Mi^\ast$ of the space of all probability measures on the space of probability measures on $I$. The space $\Mi^\ast$ is naturally equipped with a special kind of stochastic order, called the convex order, and we will use a characterisation, proved in \cite{MSS18}, of the measures $\un\rho^{(2)}_\Xi$ and $\ov\rho^{(2)}_\Xi$ from (\ref{unnu}) in terms of the convex order.

We now give the precise definitions. We let $\Pc(I^n)$ denote the space of all probability measures on $I^n$ and denote the subspace of symmetric probability measures by $\Pc_{\rm sym}(I^n)$. We equip $\Pc(I)$ with the topology of weak convergence and the associated Borel-\si-field and let $\Pc(\Pc(I))$ denote the space of all probability measures on $\Pc(I)$. Each $\nu\in\Pc(\Pc(I))$ is the law of a $\Pc(I)$-valued random variable, i.e., we can construct a random probability measure $\xi$ on $I$ such that $\nu=\P[\xi\in\,\cdot\,]$ is the law of $\xi$.  By definition, for $1\leq n\leq\infty$,
\be
\nu^{(n)}:=\E\big[\underbrace{\xi\otimes\cdots\otimes\xi}_{\mbox{$n$ times}}\big]
\ee
is called the \emph{$n$-th moment measure} of $\nu$. Here
$\xi\otimes\cdots\otimes\xi$ denotes the product measure of $n$ identical
copies of $\xi$ and the expectation of a random measure $\mu$ on a Polish
space $\Om$ is the deterministic measure $E[\mu]$ defined by
$\int_\Om\phi\di E[\mu]:=E\big[\int_\Om\phi\,\di\mu\big]$ for all bounded
measurable $\phi:\Om\to\R$. Let $\xi$ be a $\Pc(I)$-valued
random variable with law $\nu$, and conditional on $\xi$, let $(Y^k)_{k\in[n]}$
be i.i.d.\ with law $\xi$. Then it is easy to see (compare
\cite[formula~(4.1)]{MSS18}) that the unconditional law of $(Y^k)_{k\in[n]}$ is
given by $\nu^{(n)}$, i.e.,
\be\label{momeas}
\nu^{(n)}=\P\big[(Y^k)_{k\in[n]}\in\,\cdot\,\big]
\quad\mbox{where}\quad
\P\big[(Y^k)_{k\in[n]}\in\,\cdot\,\big|\,\xi\big]
=\underbrace{\xi\otimes\cdots\otimes\xi}_{\mbox{$n$ times}}.
\ee
We observe that $\nu^{(n)}\in\Pc_{\rm sym}(I^n)$ for all $\nu\in\Pc(\Pc(I))$ and $1\leq n\leq\infty$. In fact, by De Finetti's theorem, the map $\nu\mapsto\nu^{(\infty)}$ is a bijection from $\Pc(\Pc(I))$ to $\Pc_{\rm sym}(I^\infty)$. This allows us to identify the space $\Mi^{(\infty)}$ with a subspace of $\Pc(\Pc(I))$. We set (compare (\ref{infyk}))
\be\ba{l}\label{Mist}
\dis\Mi^\ast:=\big\{\nu\in\Pc(\Pc(I)):
\nu(J^\ast[t])\leq t\ \forall 0<t\leq 1\big\}\\[5pt]
\dis\quad\mbox{with}\quad
J^\ast[t]:=\big\{\xi\in\Pc(I):\xi([0,t])>0\big\}
\qquad(0<t\leq 1).
\ec
Note that $J^\ast[1]=\Pc(I)\beh\{\de_\infty\}$, where $\de_\infty$ denotes the delta-measure at $\infty$. The following lemma identifies $\Mi^{(\infty)}$ with $\Mi^\ast$.

\bl[Probability measures on probability measures]
The\label{L:Miast} map $\nu\mapsto\nu^{(\infty)}$ is a bijection from $\Mi^\ast$ to
$\Mi^{(\infty)}$.
\el

In order to expose the main line of the argument, we postpone the proof of this and some of the following lemmas till later. It follows immediately from Lemmas \ref{L:mapdef} and \ref{L:Miast} that there exist  unique maps $\Ga^\ast_t$, $C^\ast_t$, and  $T^\ast$, mapping the space $\Mi^\ast$ into itself, such that
\be\label{high}
(\Ga^\ast_t\nu)^{(\infty)}=\Ga^{(\infty)}_t\nu^{(\infty)},\quad
(C^\ast_t\nu)^{(\infty)}=C^{(\infty)}_t\nu^{(\infty)},\quand
(T^\ast\nu)^{(\infty)}=T^{(\infty)}\nu^{(\infty)}
\ee
for any $t>0$ and $\nu\in\Mi^\ast$. The equation $T^\ast\nu=\nu$ has been called the \emph{higher-level RDE} in \cite{MSS18} and we will refer to  $\Ga^\ast_t$, $C^\ast_t$, and  $T^\ast$ as \emph{higher-level maps}. The following lemma gives a more explicit description of  $\Ga^\ast_t$ and $C^\ast_t$.

\bl[Higher-level scaling and cut-off maps]
Let\label{L:hiGa} $\nu\in\Mi^\ast$ and let $\xi$ be a $\Pc(I)$-valued
random variable with law $\nu$. Then for each $t>0$, the maps
$\Ga^\ast_t$ and $C^\ast_t$ defined in (\ref{high}) are given by
\be\ba{lr@{\,}c@{\,}l}\label{hiGa}
{\rm(i)}&\dis\Ga^\ast_t\nu&=&\dis t^{-1}\P\big[\xi\circ\psi_t^{-1}\in\,\cdot\,\big]
+(1-t^{-1})\de_{\de_\infty},\\[5pt]
{\rm(ii)}&\dis C^\ast_t\nu&=&\dis\P\big[\xi\circ c_t^{-1}\in\,\cdot\,\big],
\ec
where $\de_{\de_\infty}\in\Pc(\Pc(I))$ denotes the delta measure at the point
$\de_\infty\in\Pc(I)$.
\el

The following lemma, which we cite from \cite[Lemma~2]{MSS18},
identifies the map $T^\ast$ more explicitly. Recall the definition of
the map $\chi[\om]:I^2\to I$ in (\ref{chi_def}). In line with earlier
notation, in (\ref{czT}) below, $\xi_1\otimes\xi_2\circ\chi[\om]^{-1}$
denotes the image of the random measure $\xi_1\otimes\xi_2$ under the
random map $\chi[\om]$.

\bl[Higher-level RDE map]
Let\label{L:hilev} $\nu\in\Mi^\ast$, let $\xi_1,\xi_2$ be independent
$\Pc(I)$-valued random variables with law $\nu$, and let $\om$ be an
independent random variable that is uniformly distributed on
$[0,1]\times\{1,2\}$. Then the map $T^\ast$ defined in (\ref{high}) is given by
\be\label{czT}
T^\ast(\nu)=\P\big[\xi_1\otimes\xi_2\circ\chi[\om]^{-1}\in\,\cdot\,\big].
\ee
\el

We equip the space $\Pc(\Pc(I))$ with the \emph{convex order}, which we denote
by $\leq_{\rm cv}$. Two measures $\nu_1,\nu_2\in\Pc(\Pc(I))$ satisfy
$\nu_1\leq_{\rm cv}\nu_2$ if and only if the following two equivalent
conditions are satisfied, see \cite[Thm~13]{MSS18}:
\begin{enumerate}
\item $\dis\int\phi\,\di\nu_1\leq\int\phi\,\di\nu_2$ for all convex continuous
  functions $\phi:\Pc(I)\to\R$.
\item There exists an $I$-valued random variable $Y$ defined on a probability
  space $(\Om,\Fi,\P)$ and \si-fields $\Fi_1\sub\Fi_2\sub\Fi$ such that
  $\nu_i=\P\big[\P[Y\in\,\cdot\,|\Fi_i]\in\,\cdot\,\big]$ $(i=1,2)$.
\end{enumerate}
The convex order is a partial order, in particular,
$\nu_1\leq_{\rm cv}\nu_2\leq_{\rm cv}\nu_1$ implies $\nu_1=\nu_2$, see
\cite[Lemma~15]{MSS18}. The following lemma says that the scaling maps
$\Ga^\ast_t$ preserve the convex order.

\bl[Monotonicity with respect to the convex order]
Let\label{L:cvmon} $t>0$ and let $\Ga^\ast_t$ be defined in (\ref{high}).
Then $\nu_1,\nu_2\in\Mi^\ast$ and $\nu_1\leq_{\rm cv}\nu_2$
imply $\Ga^\ast_t\nu_1\leq_{\rm cv}\Ga^\ast_t\nu_2$.
\el

Let $(\om_\ibf,Y_\ibf)_{\ibf\in\T}$ be an RTP corresponding to a solution $\rho$ to the RDE (\ref{uni_RDE}) and let $\Om:=(\om_\ibf)_{\ibf\in\T}$. We define $\un\rho,\ov\rho\in\Pc(\Pc(I))$ by
\be\label{unov}
\un\rho:=\P\big[\P[Y_\wurz\in\,\cdot\,|\,\Om]\in\,\cdot\,\big]
\quand
\ov\rho:=\P\big[\de_{Y_\wurz}\in\,\cdot\,\big].
\ee
We observe that the second moment measures of  $\un\rho$ and $\ov\rho$ are given by
\be\label{unnu2}
\un\rho^{(2)}=\P\big[(Y_\wurz,Y'_\wurz)\in\,\cdot\,\big]
\quand
\ov\rho^{(2)}=\P\big[(Y_\wurz,Y_\wurz)\in\,\cdot\,\big],
\ee
where $(Y'_\ibf)_{\ibf\in\T}$ is conditionally independent of $(Y_\ibf)_{\ibf\in\T}$ given $\Om$ and conditionally equally distributed with $(Y_\ibf)_{\ibf\in\T}$. In particular, if $\rho=\rho_\Xi$, then $\un\rho^{(2)}$ and $\ov\rho^{(2)}$ are the measures defined in (\ref{unnu}). The following proposition, which we cite from \cite[Props~3 and 4]{MSS18}, says that $\un\rho$ and $\ov\rho$ are the minimal and maximal solutions, with respect to the convex order, of the higher-level RDE $T^\ast(\nu)=\nu$.

\bp[Minimal and maximal solutions]
Let\label{P:minmax} $\rho$ be a solution to the RDE (\ref{uni_RDE}). Then the set
\be\label{Sir}
\Si_\rho:=\big\{\nu\in\Pc(\Pc(I)):T^\ast(\nu)=\nu,\ \nu^{(1)}=\rho\big\}
\ee
has a unique minimal element $\un\rho$ and maximal element $\ov\rho$ with respect to the convex order, and these are the measures defined in (\ref{unov}).
\ep

We will derive Lemma~\ref{L:scalenu} from the following, stronger statement. We will first give the proofs of Lemmas \ref{L:CGa} and \ref{L:scalenu} and then provide the proofs of the remaining lemmas.

\bl[Scaling of minimal and maximal solutions]
Let\label{L:CGa} $\rho$ be a solution to the RDE (\ref{uni_RDE}) and let $t>0$. Then
\be\label{CGa}
{\rm(i)}\ C^\ast_t\un\rho=\un{C_t\rho},\quad
{\rm(ii)}\  C^\ast_t\ov\rho=\ov{C_t\rho},\quad
{\rm(iii)}\ \Ga^\ast_t\un\rho=\un{\Ga_t\rho},\quad
{\rm(iv)}\ \Ga^\ast_t\ov\rho=\ov{\Ga_t\rho}.
\ee
\el

\bpro
We first prove (\ref{CGa})~(i) and (ii). Recall from (\ref{Can}) that $C_t\rho:=\rho\circ c_t^{-1}$ where $c_t$ is the cut-off map defined in (\ref{ct}). Let $(\om_\ibf,Y_\ibf)_{\ibf\in\T}$ be the RTP corresponding to $\rho$. Then it is easy to see that  $(\om_\ibf,c_t(Y_\ibf))_{\ibf\in\T}$ is the RTP corresponding to $C_t\rho$. Applying the definition in (\ref{unov}), it follows that
\be
\un{C_t\rho}=\P\big[\P[c_t(Y_\wurz)\in\,\cdot\,|\,\Om]\in\,\cdot\,\big]
=\P\big[\P[Y_\wurz\in\,\cdot\,|\,\Om]\circ c_t^{-1}\in\,\cdot\,\big]=C^\ast_t\un\rho,
\ee
where in the last equlity we have used Lemma~\ref{L:hiGa}. This proves (\ref{CGa})~(i). The proof of (\ref{CGa})~(ii) is similar, using the fact that $\de_{c_t(Y_\wurz)}=\de_{Y_\wurz}\circ c_t^{-1}$.

We next prove (\ref{CGa})~(iii) and (iv). We start by observing that (\ref{high}) implies that
\be\label{fimo}
(\Ga^\ast_t\nu)^{(1)}=\Ga_t\nu^{(1)}\qquad(\nu\in\Mi^\ast,\ t>0).
\ee
We moreover claim that
\be\label{GGC}
\Ga^\ast_{1/t}\circ\Ga^\ast_t\nu=C^\ast_t\nu\qquad(\nu\in\Mi^\ast,\ t>0).
\ee
To see this, define $\psi^\ast_t:\Pc(I)\to\Pc(I)$ by $\psi^\ast_t(\xi):=\xi\circ\psi_t^{-1}$. Then (\ref{hiGa})~(i) says that $\Ga^\ast_t\nu=t^{-1}\nu\circ(\psi^\ast_t)^{-1}+(1-t^{-1})\de_{\de_\infty}$. A simple calculation using the fact that $\psi^\ast_t(\de_\infty)=\de_\infty$ then gives
\be
\Ga^\ast_s\Ga^\ast_t\nu=(st)^{-1}\nu\circ(\psi^\ast_t)^{-1}\circ(\psi^\ast_s)^{-1}+(1-(st)^{-1})\de_{\de_\infty}\qquad(s,t>0).
\ee
Applying this with $s=1/t$, using (\ref{psic}), and (\ref{hiGa})~(ii), it follows that if $\xi$ is a $\Pc(I)$-valued random variable with law $\nu$, then
\be
\Ga^\ast_{1/t}\circ\Ga^\ast_t\nu=\P\big[\xi\circ\psi_t^{-1}\circ\psi_{1/t}^{-1}\in\,\cdot\,\big]=\P\big[\xi\circ c_t^{-1}\in\,\cdot\,\big]=C^\ast_t\nu,
\ee
which proves (\ref{GGC}). Let $\rho$ be a solution to the RDE (\ref{uni_RDE}), let $0<t\leq 1$, and let $\rho':=\Ga_{1/t}\rho$. Then, by Lemma~\ref{L:scaleRDE}, $\rho'$ also solves the RDE (\ref{uni_RDE}). Moreover, $\rho'$ is concentrated on $[0,t]\cup\{\infty\}$ and hence in view of (\ref{psic}) $\Ga_t\rho'=\rho$. Let us write
\be
\Mi^\ast_\rho:=\big\{\nu\in\Mi^\ast:\nu^{(1)}=\rho\big\}
\ee
and let $\Mi^\ast_{\rho'}$ be defined similarly with $\rho$ replaced by $\rho'$. Since $t\leq 1$, the cut-off map $c_{1/t}$ and hence also $C^\ast_{1/t}$ are the identity maps and hence it follows from (\ref{GGC}) with $1/t$ instead of $t$ and from (\ref{fimo}) that
\be\label{tinvt}
\mbox{$\Ga^\ast_{1/t}$ is a bijection from $\Mi^\ast_\rho$ to $\Mi^\ast_{\rho'}$ and that $\Ga^\ast_t$ is its inverse.}
\ee

It follows from Lemma~\ref{L:scalable} and our identification of $\Mi^{(\infty)}$ with $\Mi^\ast$ in Lemma~\ref{L:Miast} and (\ref{high}) that the sets $\Si_\rho$ and $\Si_{\rho'}$ defined in (\ref{Sir}) are subsets of $\Mi^\ast_\rho$ and $\Mi^\ast_{\rho'}$, respectively. Using moreover Lemma~\ref{L:scalenRDE}, we see  that $\Ga^\ast_{1/t}$ maps $\Si_\rho$ into $\Si_{\rho'}$ and that $\Ga^\ast_t$ maps $\Si_{\rho'}$ into $\Si_\rho$. By (\ref{tinvt}), we conclude that $\Ga^\ast_{1/t}$ is a bijection from $\Si_\rho$ to $\Si_{\rho'}$. By Lemma~\ref{L:cvmon}, the map $\Ga^\ast_{1/t}$ is monotone with respect to the convex order. By Proposition~\ref{P:minmax}, the set $\Si_\rho$ has unique minimal and maximal elements with respect to the convex order, which are $\un\rho$ and $\ov\rho$. Likewise, $\un\rho'$ and $\ov\rho'$ are the unique minimal and maximal elements of $\Si_{\rho'}$. Since $\Ga^\ast_{1/t}$ is a monotone bijection from $\Si_\rho$ to $\Si_{\rho'}$, it must map $\un\rho$ and $\ov\rho$ to $\un\rho'$ and $\ov\rho'$, respectively. Recalling that $\rho'=\Ga_{1/t}\rho$, this shows that
\be
\Ga^\ast_{1/t}\un\rho=\un{\Ga_{1/t}\rho},\quad
\Ga^\ast_{1/t}\ov\rho=\ov{\Ga_{1/t}\rho},
\ee
which proves (\ref{CGa})~(iii) and (iv) in the special case that $t\geq 1$.

To prove (\ref{CGa})~(iii) for $0<t\leq 1$, let $\rho''$ be a solution to the RDE (\ref{uni_RDE}), let $\rho:=\Ga_t\rho''$, and as before let $\rho'=\Ga_{1/t}\rho$. Then, by  (\ref{psic}), $\rho'=\Ga_{1/t}\circ\Ga_t\rho''=C_t\rho''$. Our previous arguments show that $\Ga^\ast_{1/t}$ maps $\un\rho$ into $\un\rho'$ and hence the inverse map $\Ga^\ast_t$ maps $\un\rho'$ into $\un\rho$, i.e.,
\be\label{acG}
\Ga^\ast_t\un\rho'=\un\rho.
\ee
Formulas (\ref{GGC}) and (\ref{CGa})~(i) tell us that
\be
\Ga^\ast_{1/t}\circ\Ga^\ast_t\un\rho''=C^\ast_t\un\rho''=\un{C_t\rho''}=\un\rho'.
\ee
Applying $\Ga^\ast_t$ from the left, using (\ref{tinvt}) and (\ref{acG}), we obtain that
\be
\Ga^\ast_t\un\rho''=\Ga^\ast_t\un\rho'=\un\rho.
\ee
Since $\rho=\Ga_t\rho''$, this proves (\ref{CGa})~(iii) for $0<t\leq 1$. The proof of (\ref{CGa})~(iv) for $0<t\leq 1$ goes exactly in the same way.
\epro

\bpro[of Lemma~\ref{L:scalenu}]
It follows from Lemma~\ref{L:CGa} and (\ref{high}) that $\Ga^{(2)}_t\un\rho^{(2)}_\Xi=\un{\Ga_t\rho_\Xi}^{(2)}$ and  $\Ga^{(2)}_t\ov\rho^{(2)}_\Xi=\ov{\Ga_t\rho_\Xi}^{(2)}$, where  $\Ga_t\rho_\Xi=\rho_{\Xi'}$ by Lemma~\ref{L:scaleRDE}.
\epro

We cited Lemma~\ref{L:hilev} and Proposition~\ref{P:minmax} from \cite{MSS18}, so to complete the proofs of this subsection, it only remains to provide the proofs of Lemmas \ref{L:Miast}, \ref{L:hiGa}, and \ref{L:cvmon}.\med

\bpro[of Lemma~\ref{L:Miast}]
By De Finetti's theorem, the map $\nu\mapsto\nu^{(\infty)}$ is a bijection from $\Pc(\Pc(I))$ to $\Pc_{\rm sym}(I^\infty)$, so it suffices to show that for $\nu\in\Pc(\Pc(I))$, one has $\nu\in\Mi^\ast$ if and only if $\nu^{(\infty)}\in\Mi^{(\infty)}$. Let $\xi$ be a $\Pc(I)$-valued random variable with law $\nu$ and conditional on $\xi$, let $(Y^k)_{k\in\N_+}$ be i.i.d.\ with law $\xi$. Then the unconditional law of $(Y^k)_{k\in\N_+}$ is $\nu^{(\infty)}$. By the definition in (\ref{Mist}), $\nu\in\Mi^\ast$ if and only if $\P[\xi([0,t])>0]\leq t$ for all $0<t\leq 1$. The event $\{\xi([0,t])>0\}$ is a.s.\ equal to the event $\{\exists k\in\N_+\mbox{ s.t.\ }Y^k\leq t\}$, so comparing with the definition in (\ref{infyk}) we see that $\nu\in\Mi^\ast$ if and only if $\nu^{(\infty)}\in\Mi^{(\infty)}$.
\epro

\bpro[of Lemma~\ref{L:hiGa}]
We need to show that $\Ga^\ast_t$ and $C^\ast_t$ defined as in (\ref{hiGa}) satisfy (\ref{high}). Conditional on $\xi$, let $(Y^k)_{k\in\N_+}$ be i.i.d.\ with law $\xi$. Then the unconditional law of $(Y^k)_{k\in\N_+}$ is $\nu^{(\infty)}$ and by (\ref{Can}) $C^{(\infty)}_t\nu^{(\infty)}$ is the (unconditional) law of $(c_t(Y^k))_{k\in\N_+}$, which is the same as $(C^\ast_t\nu)^{(\infty)}$. The claim for $\Ga^\ast_t$ follows in the same way, using (\ref{Gan}) and the fact that $\de_{\de_\infty}^{(\infty)}=\de_{\vec\infty}$.
\epro

\bpro[of Lemma~\ref{L:cvmon}]
Assume that $\nu_1,\nu_2\in\Mi^\ast$ satisfy $\nu_1\leq_{\rm cv}\nu_2$. By
characterisation~(ii) of the convex order, we can find a random variable $Y$
and \si-fields $\Fi_1\sub\Fi_2$ such that
$\nu_i=\P\big[\P[Y\in\,\cdot\,|\Fi_i]\in\,\cdot\,\big]$ $(i=1,2)$.
Then, by (\ref{hiGa})~(i),
\be
\Ga^\ast_t\nu_i=t^{-1}\ti\nu_i+(1-t^{-1})\de_{\de_\infty}
\quad\mbox{with}\quad
\ti\nu_i:=\P\big[\P[\psi_t(Y)\in\,\cdot\,|\Fi_i]\in\,\cdot\,\big]
\qquad(i=1,2).
\ee
Since $\Fi_1\sub\Fi_2$, by characterisation~(ii) of the convex order, we see
that $\ti\nu_1\leq_{\rm cv}\ti\nu_2$. Using characterisation~(i) of the
convex order, it follows that $\Ga^\ast_t\nu_1\leq_{\rm cv}\Ga^\ast_t\nu_2$.
\epro

\section{Scale invariant solutions to the bivariate RDE}\label{section_scale_invarint_solutions_of_RDE}

The goal of this section is to prove Theorem \ref{the_theorem}.
Let  $\theta \in (0,1)$.
 Throughout Section \ref{section_scale_invarint_solutions_of_RDE}  we will use the shorthand $\rho$ to denote the probability measure $\rho_{\Xi_\theta}$ defined in \eqref{muXi}.
Let
\begin{equation}\label{x_and_c_def}
x_k :=  \theta^k \quad \text{and} \quad c_k :=  \frac{1-\theta}{1+\theta} \cdot \theta^k, \qquad k \in \mathbb{N}.
\end{equation}
thus we have
\begin{equation}\label{rho}
 \rho = \rho_{\Xi_\theta} =\sum_{k=0}^\infty c_k \de_{x_k} +\frac{\tet}{1+\tet}\de_\infty.
\end{equation}
For simplification we also introduce the notation
\begin{equation}\label{x_minus_one}
x_{-1} := \infty \quad \text{and} \quad c_{-1} := \rho(\{\infty\}) = \frac{\theta}{1+\theta}.
\end{equation}
Using this notation we have $\rho(\{x_k\}) = c_k$ for every $k \ge -1$.

\begin{definition}\label{biv_with_rho_marginals}
Let $\mathcal{P}^{(2)}_\theta$ denote the space of symmetric probability measures on $I \times I$ such that its marginal distributions are $\rho$.
\end{definition}

\subsection{Main lemmas}\label{subsection_main_lemmas}

\paragraph{}In Section \ref{subsection_main_lemmas} we state the key lemmas of Section \ref{section_scale_invarint_solutions_of_RDE} and prove Theorem \ref{the_theorem} using them.

\begin{definition}[The signature of a scale invariant measure]\label{def:rho->f}
Let $\theta \in (0,1)$. The signature of a scale invariant measure $\rho^{(2)} \in \mathcal{M}^{(2)}_{\theta}$ is the function $f_{\rho^{(2)}}:\mathbb{N}\to\mathbb{R}$
defined by
\begin{equation}\label{rho->f}
f_{\rho^{(2)}}(n) :=  \rho^{(2)} \big( \{ [0,x_n] \times I\} \cup \{ I \times [0,1] \} \big), \qquad n \in \mathbb{N}.
\end{equation}
\end{definition}

The signature of the diagonal measure $\overline{\rho}^{(2)}$ (c.f.\ \eqref{unnu}) is
\begin{equation}\label{signature_of_diagonal}
f_{\overline{\rho}^{(2)}}(n) = \P\big[Y_\wurz \leq x_n \text{ or } Y_\wurz \leq 1  \big]=  \sum \limits_{k = 0}^\infty c_k = \frac{1}{1+\theta}, \quad n \in \mathbb{N}.
\end{equation}

\begin{lemma}[Conditions for $f$ to be a signature]\label{lemma:f_conditions}
If $\theta \in (0,1)$ and $f: \mathbb{N} \to \mathbb{R}$ then there exists a (unique) probability measure $\rho^{(2)} \in \mathcal{M}^{(2)}_{\theta}$ such that $f$ is its signature if and only if
\begin{enumerate}
\item $f(0) \le 1$,
\item $\lim \limits_{n \to \infty} f(n) = \frac{1}{1+\theta}$,
\item $f(n)$ is non-increasing,
\item $(1+\theta) \cdot f(0) \le 2f(1)$,
\item $(1+\theta) \cdot f(n) \le \theta \cdot f(n-1) + f(n+1)$ for every $n \ge 1$.
\end{enumerate}
\end{lemma}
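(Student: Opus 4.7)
Proof plan. The approach is to parametrize every $\rho^{(2)} \in \mathcal{M}^{(2)}_\theta$ by its atomic weights (which is legitimate because the marginal $\rho$ is purely atomic), use scale invariance to cut the degrees of freedom down to a single sequence, and then match that sequence to $f$ through an inclusion-exclusion computation. Writing $x_{-1} := \infty$ and $p_{m,n} := \rho^{(2)}(\{(x_m,x_n)\})$ for $m, n \geq -1$, the measure $\rho^{(2)}$ is concentrated on $\{(x_m,x_n) : m, n \geq -1\}$ and completely determined by the atoms $p_{m,n}$. A direct inclusion-exclusion using the marginal condition gives the key formula
\begin{equation*}
f(n) \;=\; \frac{1}{1+\theta} + \sum_{k \geq n} p_{k,-1},
\end{equation*}
from which condition (ii) corresponds to $\sum_{k \geq n} p_{k,-1} \to 0$ (automatic, since $\rho(\{0\}) = 0$), condition (iii) corresponds to $p_{m,-1} = f(m) - f(m+1) \geq 0$, and the marginal identity $p_{-1,-1} + \sum_{k \geq 0} p_{k,-1} = c_{-1}$ yields $p_{-1,-1} = 1 - f(0)$, so that condition (i) is equivalent to $p_{-1,-1} \geq 0$.

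The next step is to exploit scale invariance $\Gamma^{(2)}_\theta \rho^{(2)} = \rho^{(2)}$. Using $\psi_\theta^{-1}(\{x_m\}) = \{x_{m+1}\}$ for $m \geq 0$ and $\psi_\theta^{-1}(\{\infty\}) = \{x_0\} \cup \{\infty\}$, this identity, evaluated at each atom, yields $p_{m+1,n+1} = \theta\, p_{m,n}$ for $m,n \geq 0$ and $\theta\, p_{m,-1} = p_{m+1,0} + p_{m+1,-1}$ for $m \geq 0$, together with a single further equation at the fixed point $(\infty,\infty)$ in which the affine offset $(1-\theta^{-1})\delta_{(\infty,\infty)}$ in the definition of $\Gamma^{(2)}_\theta$ must be balanced against contributions from the four preimage atoms $(x_0,x_0)$, $(x_0,\infty)$, $(\infty,x_0)$, $(\infty,\infty)$. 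Solving these in terms of $f$ gives $p_{m,n} = \theta^{\min(m,n)} p_{|m-n|,0}$ together with
\begin{equation*}
p_{m+1,0} \;=\; \theta f(m) - (1+\theta) f(m+1) + f(m+2) \quad (m \geq 0), \qquad p_{0,0} \;=\; 2 f(1) - (1+\theta) f(0).
\end{equation*}
Non-negativity of $p_{m+1,0}$ for $m \geq 0$ is exactly condition (v) and non-negativity of $p_{0,0}$ is exactly condition (iv). Uniqueness follows, since every $p_{m,n}$ is thereby read off from $f$.

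For the converse, I assume $f$ satisfies (i)--(v), define the $p_{m,n}$ by the formulas above, and verify that they describe a measure in $\mathcal{M}^{(2)}_\theta$ with signature $f$. Non-negativity and symmetry $p_{m,n} = p_{n,m}$ are immediate from the formulas, and a short computation gives $\rho^{(2)}(J^2[x_k]) = x_k f(0) \leq x_k$ by (i), so $\rho^{(2)} \in \mathcal{M}^{(2)}$. The marginals $\sum_n p_{m,n} = c_m$ are checked by induction on $m$: the bases $m \in \{-1, 0\}$ reduce to telescoping sums whose boundary term comes from $\lim_n f(n) = \tfrac{1}{1+\theta}$ and produces $c_{-1}$ and $c_0$ respectively, while the step $m \mapsto m+1$ follows at once from the two scaling relations above. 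Scale invariance at $t = \theta$ is a finite algebraic check reversing the derivations above, and extends to all $t \in \Xi_\theta$ via the semigroup law $\Gamma^{(2)}_s \Gamma^{(2)}_t = \Gamma^{(2)}_{st}$ (itself a consequence of the pointwise identity $\psi_s \circ \psi_t = \psi_{st}$). The main technical obstacle throughout will be the bookkeeping at $(\infty,\infty)$ and in the $m = 0$ marginal, where the non-vanishing limit $\tfrac{1}{1+\theta}$ in (ii) plays a delicate role and is what produces the somewhat unexpected form of condition (iv).
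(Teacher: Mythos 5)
Your proof is correct and reaches the same conclusion by a more direct route than the paper's, so a brief comparison is in order. The paper first introduces the bivariate signature $F_{\rho^{(2)}}(x_k,x_j):=\rho^{(2)}\big([0,x_k]\times I\cup I\times[0,x_j]\big)$ and develops a small theory around it (Lemma~\ref{lemma:F_characterizes}: $F$ determines $\rho^{(2)}$; Lemma~\ref{lemma:F_conditions}: necessary and sufficient conditions on $F$; Lemma~\ref{lemma:scale_inv}: scale invariance of $\rho^{(2)}$ is equivalent to $F(x_{k+l},x_{j+l})=\theta^l F(x_k,x_j)$; Lemma~\ref{lemma:F_and_f}: $F(x_k,x_j)=x_{k\wedge j}\,f(|k-j|)$) before translating the conditions on $F$ into (i)--(v). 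You bypass the two-variable $F$ entirely: you parametrize $\rho^{(2)}$ by its atoms $p_{m,n}$, compute $f(n)=\tfrac{1}{1+\theta}+\sum_{k\ge n}p_{k,-1}$ by inclusion--exclusion, unpack $\Gamma^{(2)}_\theta\rho^{(2)}=\rho^{(2)}$ atom by atom (the relations $p_{m+1,n+1}=\theta p_{m,n}$, $\theta p_{m,-1}=p_{m+1,0}+p_{m+1,-1}$, and the $(\infty,\infty)$ mass balance), solve for every $p_{m,n}$ in terms of $f$, and then read off (i), (iii), (iv), (v) as non-negativity of $p_{-1,-1}$, $p_{m,-1}$, $p_{0,0}$, and $p_{n,0}$ ($n\geq1$) respectively, with (ii) supplying the marginal normalization. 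All your intermediate formulas check out ($p_{-1,-1}=1-f(0)$, $p_{m,-1}=f(m)-f(m+1)$, $p_{0,0}=2f(1)-(1+\theta)f(0)$, $p_{m+1,0}=\theta f(m)-(1+\theta)f(m+1)+f(m+2)$, and $\rho^{(2)}(J^2[x_k])=x_kf(0)$). What the paper's $F$-packaging buys is a cleaner separation of concerns and reuse of the machinery across Section~\ref{section_scale_invarint_solutions_of_RDE}; your atom-level version uses less scaffolding and makes the one-to-one correspondence between each hypothesis in (i)--(v) and the non-negativity of a single atom more transparent. One small ordering point to fix in a full write-up: the deduction $\rho^{(2)}(J^2[x_k])=x_kf(0)\Rightarrow\rho^{(2)}\in\mathcal{M}^{(2)}$ presupposes that the constructed object is already a symmetric probability measure, so the marginal verification must precede it; only then is the semigroup identity $\Gamma^{(2)}_s\Gamma^{(2)}_t=\Gamma^{(2)}_{st}$ applicable, since $\Gamma^{(2)}_t$ is only well-defined on $\mathcal{M}^{(2)}$.
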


\paragraph{}We will prove this lemma in Section \ref{sec:f_conditions}.
Next we define a function $f_{\theta, c}(n), n \in \mathbb{N}$ that will help us identify the signature of a scale invariant solution of the bivariate RDE.

\begin{lemma}[Implicit equation for $f_{\theta, c}(n)$]\label{lemma:imp_eq_for_f_theta_c}
Let $\theta \in (0,1)$ and $c \ge 0$ be arbitrary. The system of equations
\begin{align}
   f_{\theta,c}(0)^2 - \frac{1}{1+\theta}f_{\theta,c}(0) &= 2c, \label{quadratic_eq_for_f_0} \\
 f_{\theta,c}(n-1)^2- f_{\theta,c}(n)^2 &= \theta^{n-1}\left( f_{\theta,c}(n-1)-f_{\theta,c}(n) \right)+c \cdot \theta^{2n-2}(1-\theta^2), \quad n \geq 1, \label{quadratic_eq_for_f_n_and_n_minus_1}  \\
  f_{\theta,c}(0) & >0, \quad f_{\theta,c}(n)> \frac{\theta^{n-1}}{2}, \quad n \geq 1 \label{f_theta_c_n_geq_ugly}
\end{align}
has a unique solution $f_{\theta,c}(n), n \in \mathbb{N}$.
\end{lemma}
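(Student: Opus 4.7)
The plan is to establish uniqueness and existence of the sequence by induction on $n$. For the base, equation \eqref{quadratic_eq_for_f_0} is a quadratic in $f_{\theta,c}(0)$ with discriminant $\tfrac{1}{(1+\theta)^2} + 8c > 0$, so there are two real roots and the positivity condition selects the unique positive root
\[
f_{\theta,c}(0) \;=\; \tfrac{1}{2(1+\theta)} + \tfrac{1}{2}\sqrt{\tfrac{1}{(1+\theta)^2} + 8c};
\]
an elementary manipulation gives $f_{\theta,c}(0) > \tfrac{1}{2}$, which initializes the inductive lower bound $f_{\theta,c}(n) > \theta^{n-1}/2$ from \eqref{f_theta_c_n_geq_ugly}.

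For the inductive step I would rewrite \eqref{quadratic_eq_for_f_n_and_n_minus_1} by completing the square as
\[
\bigl(f_{\theta,c}(n-1) - \tfrac{\theta^{n-1}}{2}\bigr)^2 \;=\; \bigl(f_{\theta,c}(n) - \tfrac{\theta^{n-1}}{2}\bigr)^2 + c\theta^{2n-2}(1-\theta^2).
\]
Given $f_{\theta,c}(n-1) > \theta^{n-1}/2$, the two real roots of this quadratic in $f_{\theta,c}(n)$---when they exist---are symmetric about $\theta^{n-1}/2$, and the constraint $f_{\theta,c}(n) > \theta^{n-1}/2$ uniquely selects the larger one. So uniqueness is immediate, and the task reduces to showing that the discriminant $\bigl(f_{\theta,c}(n-1) - \theta^{n-1}/2\bigr)^2 - c\theta^{2n-2}(1-\theta^2)$ is nonnegative for every $n$. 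For $n=1$ the requirement becomes $(f_{\theta,c}(0) - 1/2)^2 \geq c(1-\theta^2)$; substituting $2c = f_{\theta,c}(0)^2 - f_{\theta,c}(0)/(1+\theta)$ from \eqref{quadratic_eq_for_f_0} reduces the claim to $2(1+\theta^2) f_{\theta,c}(0)^2 - 2(1+\theta) f_{\theta,c}(0) + 1 \geq 0$, a quadratic in $f_{\theta,c}(0)$ whose discriminant equals $-4(1-\theta)^2 < 0$ and which is therefore strictly positive for every real argument.

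For $n \geq 2$ I would set $K := \sqrt{c(1-\theta^2)}$ and $U_n := \sqrt{\bigl((f_{\theta,c}(n) - \theta^{n-1}/2)/\theta^{n-1}\bigr)^2 + K^2}$; a direct calculation using the completed-square recursion gives $U_1 = f_{\theta,c}(0) - \tfrac{1}{2}$ and $U_n = \Phi(U_{n-1})$ for $n \geq 2$, where
\[
\Phi(w) \;:=\; \tfrac{1}{\theta}\Bigl(\sqrt{w^2 - K^2} + \tfrac{1-\theta}{2}\Bigr)
\]
is strictly increasing on $[K,\infty)$ (one checks $\Phi'(w) > 1$), and the discriminant condition is then equivalent to $U_n \geq K$ for every $n$. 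The main obstacle is that the naive induction $U_{n-1} \geq K \Rightarrow U_n \geq K$ breaks down when $\Phi(K) = (1-\theta)/(2\theta) < K$, which occurs precisely in the regime $\theta K > (1-\theta)/2$. I would handle this by splitting into two cases. When $\theta K \leq (1-\theta)/2$, the map $\Phi$ sends $[K,\infty)$ into itself and the induction is immediate. When $\theta K > (1-\theta)/2$, the equation $\Phi(w_\infty) = w_\infty$ has a unique solution $w_\infty > K$ in $[K,\infty)$ (the positive root of the quadratic obtained by squaring), and a short algebraic comparison between that quadratic and \eqref{quadratic_eq_for_f_0}---captured by the identity $(U_1 - w_\infty)(U_1 + w_\infty + \theta/(1+\theta)) = c$, whose second factor is manifestly positive---yields $U_1 \geq w_\infty$; monotonicity of $\Phi$ combined with $\Phi(w_\infty) = w_\infty$ then propagates $U_n \geq w_\infty \geq K$ for every $n$ by induction.
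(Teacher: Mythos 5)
Your proof is correct and is essentially the paper's argument in a shifted coordinate: writing $U_n = g_{\theta,c}(n-1) - \tfrac12$ (where $g_{\theta,c}(n) := f_{\theta,c}(n)/\theta^n$ is the paper's auxiliary sequence) turns the paper's recursion map $\psi_{\theta,c}$ into your $\Phi$, the domain boundary $\tfrac12+\sqrt{(1-\theta^2)c}$ into your $K$, and the fixed point $x_0^*=\frac{1+\sqrt{1+4c(1+\theta)^2}}{2(1+\theta)}$ into your $w_\infty=x_0^*-\tfrac12$, after which the case split (self-map vs.\ fixed point) and the monotone-iteration argument match the paper's proof step for step. Your algebra for $U_1>K$ (reducing to a quadratic in $f_{\theta,c}(0)$ with discriminant $-4(1-\theta)^2$) and for $U_1\ge w_\infty$ (the factorization $(U_1-w_\infty)\bigl(U_1+w_\infty+\tfrac{\theta}{1+\theta}\bigr)=c$ with a manifestly positive second factor) is somewhat cleaner than the paper's repeated squaring, but proves exactly the same inequalities.
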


\begin{lemma}[Existence and continuity of $f_{\theta, c}(\infty)$]\label{lemma:f_continuous}
If $\theta \in (0,1), c \ge 0$, then the limit
\begin{equation}\label{def:eq:f_theta_infty}
f_{\theta, c}(\infty):= \lim_{n \to \infty} f_{\theta, c}(n)
\end{equation} exists and
the function $c \mapsto f_{\theta, c}(\infty)$ is continuous.
\end{lemma}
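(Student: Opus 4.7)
The plan is to exploit the factored form of \eqref{quadratic_eq_for_f_n_and_n_minus_1} to deduce both monotonicity of $(f_{\theta,c}(n))_{n\in\N}$ (which gives existence of the limit) and a geometric decay of $f_{\theta,c}(n-1)-f_{\theta,c}(n)$ in $n$ uniform on compact sets of $c$ (which gives continuity of $c\mapsto f_{\theta,c}(\infty)$ via a uniform-limit argument).

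For existence of the limit, I would rewrite \eqref{quadratic_eq_for_f_n_and_n_minus_1} as
\[
\big(f_{\theta,c}(n-1)-f_{\theta,c}(n)\big)\big(f_{\theta,c}(n-1)+f_{\theta,c}(n)-\theta^{n-1}\big)=c\,\theta^{2n-2}(1-\theta^{2}),
\]
and verify that the second factor is strictly positive. For $n\geq 2$ this follows directly from \eqref{f_theta_c_n_geq_ugly}, since $f_{\theta,c}(n-1)+f_{\theta,c}(n)-\theta^{n-1}>\theta^{n-2}/2+\theta^{n-1}/2-\theta^{n-1}=\theta^{n-2}(1-\theta)/2>0$. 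For $n=1$, equation \eqref{quadratic_eq_for_f_0} together with $f_{\theta,c}(0)>0$ forces $f_{\theta,c}(0)\geq 1/(1+\theta)>1/2$, which combined with $f_{\theta,c}(1)>1/2$ yields $f_{\theta,c}(0)+f_{\theta,c}(1)-1>(1-\theta)/(2(1+\theta))>0$. Consequently $(f_{\theta,c}(n))_{n\in\N}$ is nonincreasing, and being bounded below by $0$, the limit $f_{\theta,c}(\infty)$ exists.

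For continuity, I would first check by induction that each $c\mapsto f_{\theta,c}(n)$ is continuous. The base case $n=0$ is the explicit positive root $\tfrac{1}{2(1+\theta)}+\tfrac{1}{2}\sqrt{(1+\theta)^{-2}+8c}$. In the inductive step, solving \eqref{quadratic_eq_for_f_n_and_n_minus_1} as a quadratic in $f_{\theta,c}(n)$ and selecting the branch compatible with \eqref{f_theta_c_n_geq_ugly} yields
\[
f_{\theta,c}(n)=\tfrac{\theta^{n-1}}{2}+\tfrac{1}{2}\sqrt{\big(2f_{\theta,c}(n-1)-\theta^{n-1}\big)^{2}-4c\,\theta^{2n-2}(1-\theta^{2})},
\]
which is continuous in $c$ by the inductive hypothesis. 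Next, dividing the factored equation by the positive lower bounds above yields, for $n\geq 2$, the estimate $f_{\theta,c}(n-1)-f_{\theta,c}(n)\leq 2c(1+\theta)\,\theta^{n}$, with an analogous bound of the form $O(c)$ for $n=1$. Telescoping gives
\[
\big|f_{\theta,c}(n)-f_{\theta,c}(\infty)\big|\leq K(c)\,\theta^{n}\qquad(n\geq 1),
\]
with $K(c)$ continuous in $c$ and bounded on compact subsets of $[0,\infty)$. Therefore $f_{\theta,c}(n)\to f_{\theta,c}(\infty)$ uniformly in $c$ on compact sets, and a uniform limit of continuous functions is continuous.

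The main obstacle, though minor, is that the lower bound on the denominator $f_{\theta,c}(n-1)+f_{\theta,c}(n)-\theta^{n-1}$ does \emph{not} follow from the blanket estimate \eqref{f_theta_c_n_geq_ugly} in the $n=1$ case, and must be extracted by invoking the sharper bound $f_{\theta,c}(0)\geq 1/(1+\theta)$ hidden in \eqref{quadratic_eq_for_f_0}; once this technical point is dispatched, the monotonicity, the explicit inductive formula, and the telescoping argument combine routinely to deliver the lemma.
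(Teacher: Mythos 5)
Your proposal is correct and follows essentially the same strategy as the paper's proof: monotonicity plus boundedness gives existence of the limit, and pointwise continuity in $c$ combined with a geometric decay of $f_{\theta,c}(n-1)-f_{\theta,c}(n)$ uniform on compacts gives continuity of $c\mapsto f_{\theta,c}(\infty)$ as a uniform limit of continuous functions. The only notable difference is that the paper derives the decay bound $f_{\theta,c}(n-1)-f_{\theta,c}(n)\le\tfrac12\sqrt{4c\,\theta^{2n-2}(1-\theta^2)}$ directly from \eqref{fn_with_c} via the elementary inequality $a-\sqrt{a^2-b}\le\sqrt{b}$ (valid whenever $a\ge\sqrt{b}\ge 0$), which sidesteps the $n=1$ edge case you had to treat separately through the sharper bound $f_{\theta,c}(0)\ge 1/(1+\theta)$.
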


 \paragraph{}We will prove Lemmas \ref{lemma:imp_eq_for_f_theta_c} and \ref{lemma:f_continuous}  in Section \ref{sec:f_unique}.
 Note that if $c=0$ then
\begin{equation}\label{f_theta_null}
   f_{\theta, 0}(n) = \frac{1}{1+\theta}, \qquad n \in \mathbb{N}
\end{equation}
is a solution of \eqref{quadratic_eq_for_f_0}-\eqref{f_theta_c_n_geq_ugly} (and it  follows from the uniqueness statement of Lemma \ref{lemma:imp_eq_for_f_theta_c} that \eqref{f_theta_null} is the only solution of of \eqref{quadratic_eq_for_f_0}-\eqref{f_theta_c_n_geq_ugly} in the $c=0$ case).

\begin{remark}\label{remark:cont_case_system}
Note that if we rearrange \eqref{quadratic_eq_for_f_0} and \eqref{quadratic_eq_for_f_n_and_n_minus_1},  we get
\begin{equation}
f_{\theta, c}(0)^2 - \frac{f_{\theta, c}(0)}{1+\theta} = 2c, \qquad
\frac{f_{\theta, c}(n-1) - f_{\theta, c}(n)}{\theta^{n-1} - \theta^n} = \frac{c \cdot \theta^{n-1} \cdot \frac{1+\theta}{2}}{\frac{f_{\theta, c}(n-1) + f_{\theta, c}(n)}{2} - \frac{\theta^{n-1}}{2}}.
\end{equation}
Now if we non-rigorously define the function $f$ by $f(\theta^n) := f_{\theta, c}(n)$ when $\theta$ is very close to $1$, moreover we denote $r:= \theta^n$, then in the $\theta \to 1$ limit we get
\begin{equation} \label{cont_case}
f(1)^2 - \frac 12 f(1) = 2c, \qquad
\frac{\partial}{\partial r} f(r) = \frac{c \cdot r}{f(r) - r/2}
\end{equation}
i.e., conditions (iii) and (i) of equation (2.2) of \cite{RST19}.
We also note that condition (ii) of equation (2.2) of \cite{RST19},i.e., $f(0) = \frac 12$, corresponds to the condition $ f_{\theta, c}(\infty) = \frac{1}{1+\theta}$ in our current discrete setting. We will see that the key question is whether there exists $c>0$ for which $ f_{\theta, c}(\infty) = \frac{1}{1+\theta}$ holds.
\end{remark}

\begin{lemma}[Signature of scale invariant solution of the bivariate RDE]\label{lemma:equivalent_system}
Let $\rho^{(2)} \in \mathcal{M}^{(2)}_{\theta}$ and let $f_{\rho^{(2)}}$ denote its signature.

\begin{enumerate}
\item $\rho^{(2)}$ is a solution of the bivariate RDE \eqref{bivar_RDE} if and only if there exists $c \ge 0$ such that $f_{\rho^{(2)}}(n) = f_{\theta, c}(n)$ holds for every $n \in \mathbb{N}$.

\item If $\rho^{(2)}$ is a solution of the bivariate RDE and $c$ is the parameter for which $f_{\rho^{(2)}}(n) = f_{\theta, c}(n)$ holds for every $n \in \mathbb{N}$, then
\begin{equation}\label{c_upper_bound}
c \le \max \left( 0, \frac{\theta \cdot (2\theta - 1)}{(1+\theta)^2} \right).
\end{equation}
\end{enumerate}
\end{lemma}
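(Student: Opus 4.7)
The plan is to translate the bivariate RDE $T^{(2)}\rho^{(2)} = \rho^{(2)}$ into a recursion on the signature $f := f_{\rho^{(2)}}$ and identify it with the system defining $f_{\theta,c}$. The key structural ingredient is that scale invariance reduces an arbitrary $\rho^{(2)} \in \mathcal{M}^{(2)}_\theta$ to an essentially one-parameter family of atomic masses. Writing $p_{i,j} := \rho^{(2)}(\{(x_i, x_j)\})$ with the convention $x_{-1} := \infty$, the equation $\Gamma^{(2)}_\theta \rho^{(2)} = \rho^{(2)}$ gives $p_{i,j} = \theta\, p_{i-1,j-1}$ for $i, j \geq 1$ plus boundary relations for $i$ or $j$ equal to $-1$. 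Combined with symmetry and the marginal constraints $\sum_j p_{i,j} = c_i$, this allows one to express each $p_{i,j}$ explicitly in terms of $f$; in particular $p_{-1,-1} = 1 - f(0)$ and $p_{0,0} = 2f(1) - (1+\theta)f(0)$.

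For the ``only if'' part of (i), I would compute $T^{(2)}\rho^{(2)}$ on the set $A_n := [0, x_n] \times I \cup I \times [0, 1]$ defining the signature, by conditioning on $\kappa \in \{1, 2\}$ and integrating over $\tau \in [0, 1]$ in the definition of $\chi[\omega]$. The $\kappa = 2$ branch contributes a quadratic expression in $f(n-1), f(n)$ (arising from pairwise minima), while the $\kappa = 1$ branch yields, after evaluating $\int_0^1 \mathbb{P}(\tau < Y^k \leq x_n)\, d\tau$ using the atomic structure of $\rho$, a linear term of the form $\theta^{n-1}(f(n-1) - f(n))$. Demanding $T^{(2)}\rho^{(2)}(A_n) = f(n)$ yields \eqref{quadratic_eq_for_f_n_and_n_minus_1}, and the $n = 0$ case yields \eqref{quadratic_eq_for_f_0} with $c := \tfrac12(f(0)^2 - f(0)/(1+\theta))$. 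Non-negativity $c \geq 0$ follows from $f(0) \geq 1/(1+\theta)$, equivalent to $p_{-1,-1} \leq \rho(\{\infty\})$; the strict inequality \eqref{f_theta_c_n_geq_ugly} is automatic because $f(n) \geq 1/(1+\theta) > \theta^{n-1}/2$ for $\theta < 1$. By uniqueness in Lemma~\ref{lemma:imp_eq_for_f_theta_c}, $f = f_{\theta, c}$. For the ``if'' direction, given $f = f_{\theta, c}$, the explicit formulas for $p_{i,j}$ determine a unique candidate $\rho^{(2)}$, and reversing the above computation verifies that it solves the bivariate RDE.

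For (ii), the upper bound on $c$ follows by combining the limit condition $f(\infty) = 1/(1+\theta) =: L$ (condition (ii) of Lemma~\ref{lemma:f_conditions}, equivalent to the marginal constraint for $\rho^{(2)}$ being satisfied) with the system defining $f_{\theta, c}$. Telescoping \eqref{quadratic_eq_for_f_n_and_n_minus_1} from $n = 1$ to infinity gives $f(0)^2 - f(\infty)^2 = \sum_{n \geq 1}\theta^{n-1}(f(n-1) - f(n)) + c$. Imposing $f(\infty) = L$ and using \eqref{quadratic_eq_for_f_0} in the form $f(0)(f(0) - L) = 2c$, the identity rearranges to $c(f(0) + 2L)/f(0) = \sum_{n \geq 1}\theta^{n-1}(f(n-1) - f(n))$. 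Since the right-hand side is bounded above by $\sum_{n \geq 1}(f(n-1) - f(n)) = f(0) - L = 2c/f(0)$ (using $\theta^{n-1} \leq 1$), dividing by $c > 0$ yields $f(0) + 2L \leq 2$, i.e., $f(0) \leq 2\theta/(1+\theta)$. Substituting back into \eqref{quadratic_eq_for_f_0} gives $c \leq \theta(2\theta - 1)/(1+\theta)^2$. For $\theta \leq 1/2$ this upper bound is non-positive, and combined with $c \geq 0$ forces $c = 0$, yielding the ``max with zero'' form.

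The main obstacle: the most technically delicate step is the explicit computation of $T^{(2)}\rho^{(2)}(A_n)$ in terms of $f$, which requires splitting the integration over $\tau$ according to the atoms $\{x_k\}$ of $\rho$, using the scale-invariance relations to collapse the resulting sums, and carefully matching the specific coefficients $\theta^{n-1}$ and $c\theta^{2n-2}(1-\theta^2)$ in \eqref{quadratic_eq_for_f_n_and_n_minus_1}. The algebraic bookkeeping to identify each $p_{i,j}$ in terms of $f$ (so that the formulas for $p_{-1,-1}$, $p_{0,0}$, etc.\ can be extracted cleanly) is also a source of possible confusion, especially at the boundary indices involving $x_{-1}=\infty$.
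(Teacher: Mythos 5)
Your overall framework for part (i) matches the paper's: translate the bivariate RDE into a functional equation for the signature $f_{\rho^{(2)}}$ and match it against the system \eqref{quadratic_eq_for_f_0}--\eqref{f_theta_c_n_geq_ugly} that uniquely determines $f_{\theta,c}$. However, there is a genuine gap in the description of the key computation. You claim that demanding $T^{(2)}\rho^{(2)}(A_n) = f(n)$ (where $A_n := [0,x_n]\times I\cup I\times[0,x_0]$) directly yields the local recursion \eqref{quadratic_eq_for_f_n_and_n_minus_1}, with the $\kappa=1$ branch contributing a single term $\theta^{n-1}(f(n-1)-f(n))$. This is not correct: the $\kappa=1$ contribution requires integrating $\tau$ over all atomic intervals $[\theta^t,\theta^{t-1})$ for $t > n$, so the fixed-point equation at $A_n$ is a \emph{nonlocal} equation involving the full tail sum $\sum_{t\geq n}\theta^t f(t+1)$ (this is exactly the paper's \eqref{f_recursion}). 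The two-point recursion \eqref{quadratic_eq_for_f_n_and_n_minus_1} only emerges after subtracting the fixed-point equation at $n$ from the one at $n-1$, which telescopes away the tail. You need to make this differencing step explicit. Relatedly, in the ``if'' direction you claim the argument reverses; but going from the two-point recursion back to the nonlocal fixed-point equation \eqref{f_recursion} requires a boundary condition at $n=\infty$, namely $\lim_n f_{\theta,c}(n)=\tfrac{1}{1+\theta}$ (which does hold here, since $\rho^{(2)}\in\mathcal{M}^{(2)}_\theta$ guarantees this limit by Lemma~\ref{lemma:f_conditions}, and is how the paper invokes Lemma~\ref{lemma_properties_of_f_theta_c}); this cannot be omitted because the recursion alone does not determine $c$.

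For part (ii), your telescoping argument is genuinely different from the paper's and is cleaner. The paper bounds the sum formula \eqref{c_f_theta_c_formula} for $c$ using $f(t+1)\geq\tfrac{1}{1+\theta}$ and then solves a quadratic inequality for $f(0)$ (equations \eqref{c_inequality}--\eqref{f_theta_c_0_in_interval}). You instead telescope \eqref{quadratic_eq_for_f_n_and_n_minus_1}, combine with \eqref{quadratic_eq_for_f_0} in the form $f(0)(f(0)-L)=2c$ where $L:=\tfrac{1}{1+\theta}$, and obtain the identity $c\,(f(0)+2L)/f(0)=\sum_{n\geq 1}\theta^{n-1}(f(n-1)-f(n))$; bounding the right side by the crude estimate $\theta^{n-1}\leq 1$ and telescoping again gives $f(0)+2L\leq 2$, hence $f(0)\leq 2\theta/(1+\theta)$ and then $c\leq\theta(2\theta-1)/(1+\theta)^2$. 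I checked the algebra and it is correct (it implicitly uses monotonicity $f(n-1)\geq f(n)$ and the limit condition, both supplied by Lemma~\ref{lemma:f_conditions}, and the fact that $x\mapsto x(x-L)$ is increasing on $[L,\infty)$). This avoids the intermediate sum formula entirely and is more elementary.
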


\paragraph{}We will prove this lemma in Section \ref{sec:equivalent_system}.
Note that that the diagonal measure $\overline{\rho}^{(2)}$ defined in \eqref{unnu} is a solution of the bivariate RDE \eqref{bivar_RDE}, and indeed  $f_{\overline{\rho}^{(2)}}(n) = \frac{1}{1+\theta} = f_{\theta, 0}(n)$ for every $n \in \mathbb{N}$ by \eqref{signature_of_diagonal} and \eqref{f_theta_null},
in accordance with Lemma \ref{lemma:equivalent_system}.

\begin{definition}[Perturbation of the diagonal signature]\label{def:f_tilde}
Let $\theta \in (0,1)$ and $c \ge 0$ be arbitrary and $f_{\theta, c}$. Let us define
\begin{equation}\label{eq_tilde_f_theta_n}
\tilde{f}_\theta(n) := \left( \frac{\partial}{\partial c} f_{\theta, c}(n) \right) \bigg|_{c=0_+} \quad \text{and} \quad \tilde{f}_\theta(\infty) := \lim \limits_{n \to \infty} \tilde{f}_\theta (n).
\end{equation}
\end{definition}
We will prove in Section \ref{sec:lim_f_c=0} that the limit in \eqref{eq_tilde_f_theta_n} exists. Recall the notion of $\theta^* \in (\tfrac{1}{2},1)$ from Lemma \ref{L:tetdef}.
\begin{lemma}[Critical value]\label{lemma:lim_f_c=0}
We have $\tilde{f}_\theta(\infty) > 0$ for every $\theta \in (0, \theta^*)$, $\tilde{f}_\theta(\infty) < 0$ for every $\theta \in (\theta^*, 1)$ and $\tilde{f}_{\theta^*}(\infty)=0$.
\end{lemma}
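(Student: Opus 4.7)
The plan is to identify $\tilde f_\theta(\infty)$ explicitly with $g(\theta)$ from Lemma~\ref{L:tetdef}; then the trichotomy follows immediately from that lemma. Taking existence and continuity of the derivative $\tilde f_\theta(n)$ for granted (this is asserted to be proved in Section~\ref{sec:f_unique}), the entire argument consists of implicit differentiation of the defining system at $c=0$ and a telescoping sum.

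First I would compute $\tilde f_\theta(0)$. Differentiating \eqref{quadratic_eq_for_f_0} in $c$ at $c=0_+$ and using $f_{\theta,0}(0)=\tfrac{1}{1+\theta}$ gives
\begin{equation*}
\Bigl(2 f_{\theta,0}(0) - \tfrac{1}{1+\theta}\Bigr)\tilde f_\theta(0) = 2,
\qquad\text{hence}\qquad
\tilde f_\theta(0)=2(1+\theta).
\end{equation*}

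Next I would rewrite the recursion \eqref{quadratic_eq_for_f_n_and_n_minus_1} in the factored form
\begin{equation*}
\bigl(f_{\theta,c}(n-1)-f_{\theta,c}(n)\bigr)\bigl(f_{\theta,c}(n-1)+f_{\theta,c}(n)-\theta^{n-1}\bigr)
= c\,\theta^{2n-2}(1-\theta^2)\qquad(n\geq 1).
\end{equation*}
Since $f_{\theta,0}(n-1)=f_{\theta,0}(n)=\tfrac{1}{1+\theta}$, the first factor vanishes at $c=0$ while the second factor equals $\tfrac{2}{1+\theta}-\theta^{n-1}$, which is strictly positive for every $n\geq 1$ because $\theta<1$. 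Differentiating the product rule at $c=0_+$ therefore yields the clean one-step identity
\begin{equation*}
\tilde f_\theta(n-1)-\tilde f_\theta(n)
=\frac{\theta^{2n-2}(1-\theta^2)}{\tfrac{2}{1+\theta}-\theta^{n-1}}\qquad(n\geq 1).
\end{equation*}
Summing telescopically from $n=1$ to $N$ and letting $N\to\infty$ (this is where I use existence of the limit $\tilde f_\theta(\infty)$), I would obtain
\begin{equation*}
\tilde f_\theta(\infty)
= \tilde f_\theta(0) - \sum_{\ell=0}^{\infty}\frac{\theta^{2\ell}(1-\theta^2)}{\tfrac{2}{1+\theta}-\theta^{\ell}}
= 2(1+\theta) - \sum_{\ell=0}^{\infty}\frac{\theta^{2\ell}(1-\theta^2)}{\tfrac{2}{1+\theta}-\theta^{\ell}}
= g(\theta).
\end{equation*}
The series converges absolutely because its $\ell$-th term is $O(\theta^{2\ell})$.

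With the identity $\tilde f_\theta(\infty)=g(\theta)$ in hand, Lemma~\ref{L:tetdef} immediately gives the three statements: $g(\theta)>0$ on $(0,\theta^\ast)$, $g(\theta^\ast)=0$, and $g(\theta)<0$ on $(\theta^\ast,1)$. The only nontrivial point that is not pure calculation is the justification of passing the derivative through the limit $n\to\infty$; I expect this to be the main technical obstacle, but it should follow routinely from the uniform convergence estimates that are needed anyway in Section~\ref{sec:f_unique} to establish the existence and continuity statements of Lemma~\ref{lemma:f_continuous} — specifically, from a uniform-in-$n$ bound on $\partial_c f_{\theta,c}(n)$ on a neighbourhood of $c=0$, which the telescoping identity above itself provides.
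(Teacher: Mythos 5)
Your proposal is correct and follows essentially the same route as the paper: establish $\tilde f_\theta(0)=2(1+\theta)$ and the one-step recursion $\tilde f_\theta(n-1)-\tilde f_\theta(n)=\gamma_n(\theta)$, telescope to obtain $\tilde f_\theta(\infty)=g(\theta)$, and read off the trichotomy from the monotonicity and sign change of $g$ (the paper in fact proves Lemmas~\ref{L:tetdef} and~\ref{lemma:lim_f_c=0} in a single combined argument, so the identification with $g$ and the properties of $g$ are established together). The only cosmetic difference is that you implicitly differentiate the quadratics \eqref{quadratic_eq_for_f_0}--\eqref{quadratic_eq_for_f_n_and_n_minus_1} whereas the paper differentiates the explicit square-root formulas \eqref{f0_with_c}--\eqref{fn_with_c}; both yield the same recursion. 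Your closing worry about passing the derivative through the limit $n\to\infty$ is unfounded: by the definition in \eqref{eq_tilde_f_theta_n}, $\tilde f_\theta(\infty)$ is the limit of the derivatives $\tilde f_\theta(n)$, not the derivative of a limit, and your telescoping identity already gives $\tilde f_\theta(N)$ exactly for every finite $N$, so taking $N\to\infty$ is simply summing a convergent series and no interchange of limits is involved.
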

We will prove this lemma in Section \ref{sec:lim_f_c=0}.

\begin{lemma}[Solution of the recursion if $\theta \le \theta^*$]\label{lemma:theta<=theta*}
If $\theta \in \left(\frac 12, \theta^*\right]$, then there does not exist $c \in \left( 0, \frac{\theta \cdot (2\theta - 1)}{(1+\theta)^2} \right] $ for which $ f_{\theta, c}(\infty) = \frac{1}{1+\theta}$.
\end{lemma}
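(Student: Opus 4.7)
The plan is to argue by contradiction. Suppose that for some $\theta \in (\tfrac{1}{2}, \theta^*]$ there exists $c \in (0, c_{\max}]$, with $c_{\max} := \theta(2\theta-1)/(1+\theta)^2$, such that $f_{\theta,c}(\infty) = 1/(1+\theta)$. I would set $u_n := f_{\theta,c}(n) - 1/(1+\theta)$, yielding a non-negative, non-increasing sequence with $\lim_{n\to\infty} u_n = 0$ and, by \eqref{quadratic_eq_for_f_0}, $u_0 = 2c/f_{\theta,c}(0) > 0$. Rewriting \eqref{quadratic_eq_for_f_n_and_n_minus_1} in terms of $u_n$ gives
\[
(u_{n-1} - u_n)\bigl(u_{n-1} + u_n + \tfrac{2}{1+\theta} - \theta^{n-1}\bigr) = c\theta^{2n-2}(1-\theta^2),\qquad n\geq 1.
\]

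First I would sum this identity directly over $n$. Telescoping $\sum(u_{n-1}^2 - u_n^2) = u_0^2$, applying Abel summation to handle $\sum \theta^{n-1}(u_{n-1} - u_n) = u_0 - (1-\theta)\sum\theta^{n-1}u_n$, and substituting $u_0^2 + u_0/(1+\theta) = 2c$ from \eqref{quadratic_eq_for_f_0} collapses everything to the clean identity
\[
(1-\theta)\sum_{n=1}^\infty \theta^{n-1} u_n = \frac{u_0 \theta}{1+\theta} - c.
\]
The left-hand side is non-negative, so this recovers $c \leq c_{\max}$; at the boundary $c = c_{\max}$ the right-hand side vanishes, forcing $u_n = 0$ for every $n \geq 1$. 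Substituting $u_1 = 0$ into the original recursion at $n = 1$ gives $u_0\bigl(u_0 + (1-\theta)/(1+\theta)\bigr) = c(1-\theta^2)$, and using $u_0 = (2\theta-1)/(1+\theta)$ (which holds at $c = c_{\max}$) the left side simplifies to $c_{\max}$, so the equation reduces to $1 = 1 - \theta^2$, contradicting $\theta > 0$. This rules out $c = c_{\max}$ outright.

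For $c \in (0, c_{\max})$ the boundary argument degenerates, and I would derive a second independent identity by dividing the recursion by $\frac{2}{1+\theta} - \theta^{n-1}$ before summing. Writing $D_n := u_{n-1}+u_n + \frac{2}{1+\theta}-\theta^{n-1}$, the quotient $D_n/(\frac{2}{1+\theta}-\theta^{n-1}) = 1 + (u_{n-1}+u_n)/(\frac{2}{1+\theta}-\theta^{n-1})$ telescopes to produce an expression in which the coefficient $\sum_{\ell\geq 0}\theta^{2\ell}(1-\theta^2)/(\frac{2}{1+\theta}-\theta^\ell) = 2(1+\theta) - g(\theta)$ from Lemma~\ref{L:tetdef} appears explicitly. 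Combining this new identity with the first one should force $g(\theta) < 0$, contradicting $g(\theta) \geq 0$ for $\theta \leq \theta^*$ by Lemma~\ref{L:tetdef}. The hard part will be closing the argument at the critical value $\theta = \theta^*$ itself, where $g(\theta^*) = 0$ and the linearized estimates collapse; there I would need to extract second-order (nonlinear) information about the $u_n$ from the recursion, essentially continuing the perturbative expansion beyond the vanishing first-order obstruction $\tilde{f}_{\theta^*}(\infty) = g(\theta^*) = 0$ supplied by Lemma~\ref{lemma:lim_f_c=0}.
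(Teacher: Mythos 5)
Your argument for the boundary case $c = c_{\max} := \theta(2\theta-1)/(1+\theta)^2$ is correct and quite slick: the summed identity $(1-\theta)\sum_{n\geq 1}\theta^{n-1}u_n = u_0\theta/(1+\theta)-c$ does force $u_n\equiv 0$ for $n\geq 1$ at $c=c_{\max}$, and feeding this back into the $n=1$ recursion gives $1=1-\theta^2$, a contradiction. That part is a genuinely different route from the paper's.

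However, for $c\in(0,c_{\max})$ your strategy does not close, and the gap is broader than you acknowledge. Your second identity reads
\begin{equation*}
u_0 + \sum_{n\geq 1}\frac{u_{n-1}^2-u_n^2}{\tfrac{2}{1+\theta}-\theta^{n-1}} = c\bigl(2(1+\theta)-g(\theta)\bigr),
\end{equation*}
and since $u_0 + u_0^2(1+\theta)=2c(1+\theta)$ (from \eqref{quadratic_eq_for_f_0}), assuming $g(\theta)\geq 0$ yields $\sum_{n\geq 1}\frac{u_{n-1}^2-u_n^2}{2/(1+\theta)-\theta^{n-1}}\leq u_0^2(1+\theta)$. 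To contradict this you would need the sum to exceed $u_0^2(1+\theta)$, but the elementary bounds one can extract from it are
\begin{equation*}
\frac{1+\theta}{2}\,u_0^2 \;\leq\; \sum_{n\geq 1}\frac{u_{n-1}^2-u_n^2}{\tfrac{2}{1+\theta}-\theta^{n-1}} \;\leq\; \frac{1+\theta}{1-\theta}\,u_0^2,
\end{equation*}
which straddle the threshold $(1+\theta)u_0^2$. So the two averaged identities are consistent with $g(\theta)\geq 0$ for any $\theta\in(\tfrac12,\theta^*)$, not just at $\theta=\theta^*$; they do not force $g(\theta)<0$. The information you would need lies in the distribution of mass within $(u_n)_{n\geq 1}$, which is washed out by summing.

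The paper avoids this difficulty by arguing pathwise rather than in aggregate. Lemma \ref{lemma:theta<theta*} shows, by induction on $n$, that the quantity $f_{\theta,c}(n)-\tilde f_\theta(n)\,c$ is non-decreasing for $n\geq 1$, and that at $n=1$ it strictly exceeds $\tfrac{1}{1+\theta}$ whenever $\theta>\tfrac12$ and $c\leq c_{\max}$ (a direct computation from \eqref{f0_with_c} and \eqref{fn_with_c}). Combining this with $\tilde f_\theta(\infty)\geq 0$ for $\theta\leq\theta^*$ (Lemma \ref{lemma:lim_f_c=0}) gives $f_{\theta,c}(\infty)>\tfrac{1}{1+\theta}$ directly. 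In short: the paper compares $f_{\theta,c}(n)$ against the linearized profile $\tfrac{1}{1+\theta}+\tilde f_\theta(n)c$ termwise and extracts a monotone obstruction, whereas your approach sums against two test sequences and loses exactly the inequality that matters. Your boundary argument is a nice supplement, but you would need the paper's monotonicity idea (or an equally sharp substitute) to finish the interior case.
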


\paragraph{}We will prove this lemma in Section \ref{sec:theta<=theta*}.

\begin{lemma}[Solution of the recursion if $\theta > \theta^*$]\label{lemma:theta>theta*}
For any $\theta \in (\theta^*, 1)$ there exists a $\hat{c} > 0$ for which $ f_{\theta, \hat{c}}(\infty) = \frac{1}{1+\theta}$,
moreover $f_{\theta, \hat{c}}$ also satisfies all of the conditions of Lemma \ref{lemma:f_conditions}.
\end{lemma}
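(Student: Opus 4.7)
The plan is to apply the intermediate value theorem to the continuous function $c \mapsto f_{\theta, c}(\infty) - \tfrac{1}{1+\theta}$ on $[0, c_{\max}]$, where $c_{\max} := \theta(2\theta-1)/(1+\theta)^2$ is the extremal value from Lemma \ref{lemma:equivalent_system}(ii). Continuity follows from Lemma \ref{lemma:f_continuous}, and by \eqref{f_theta_null} the function vanishes at $c=0$. Since $\theta > \theta^*$, Lemma \ref{lemma:lim_f_c=0} gives $\tilde f_\theta(\infty) < 0$, so $f_{\theta, c}(\infty) < \tfrac{1}{1+\theta}$ for all sufficiently small $c > 0$.

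The crux of the proof will be to establish the reverse strict inequality $f_{\theta, c_{\max}}(\infty) > \tfrac{1}{1+\theta}$. At $c = c_{\max}$, \eqref{quadratic_eq_for_f_0} forces $f_{\theta, c_{\max}}(0) = \tfrac{2\theta}{1+\theta}$. Iterating the recursion \eqref{quadratic_eq_for_f_n_and_n_minus_1}, recast as
\[
f_{\theta, c}(n) - \tfrac{1}{2} \theta^{n-1} \;=\; \sqrt{\bigl(f_{\theta, c}(n-1) - \tfrac{1}{2}\theta^{n-1}\bigr)^2 - c\,\theta^{2n-2}(1-\theta^2)},
\]
one can control the telescoping sum $\sum_{n\ge 1} c_{\max}\,\theta^{2n-2}(1-\theta^2)/D_n = f_{\theta, c_{\max}}(0) - f_{\theta, c_{\max}}(\infty)$ with $D_n := f_{\theta, c_{\max}}(n-1) + f_{\theta, c_{\max}}(n) - \theta^{n-1}$. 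I expect the strict inequality to follow by comparing the resulting expression against the series defining $g(\theta)$ in Lemma \ref{L:tetdef} and exploiting $g(\theta) < 0$ for $\theta > \theta^*$. This matching and bounding step is the main technical obstacle of the proof.

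Once both sign conditions are in place, the IVT yields $\hat c \in (0, c_{\max})$ with $f_{\theta, \hat c}(\infty) = \tfrac{1}{1+\theta}$, establishing condition (ii) of Lemma \ref{lemma:f_conditions}. For the other conditions: (iii) is immediate from
\[
f_{\theta, \hat c}(n-1) - f_{\theta, \hat c}(n) \;=\; \frac{\hat c\,\theta^{2n-2}(1-\theta^2)}{f_{\theta, \hat c}(n-1) + f_{\theta, \hat c}(n) - \theta^{n-1}} \;\ge\; 0,
\]
using \eqref{f_theta_c_n_geq_ugly} for positivity of the denominator. Condition (i) follows from $\hat c \le c_{\max}$, since then $f_{\theta, \hat c}(0) \le \tfrac{2\theta}{1+\theta} \le 1$. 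Condition (iv) reduces, via \eqref{quadratic_eq_for_f_0} and the expression for $\Delta_1$ from the recursion, to $f_{\theta, \hat c}(1) \ge \theta f_{\theta, \hat c}(0)$, and this in turn is equivalent to $f_{\theta, \hat c}(0)\bigl(f_{\theta, \hat c}(0)(1+\theta) - 1\bigr) \ge 0$, which holds since $f_{\theta, \hat c}(0) \ge \tfrac{1}{1+\theta}$ (the positive root of \eqref{quadratic_eq_for_f_0}). Finally, condition (v) for $n \ge 1$ is equivalent to $\theta D_n \le D_{n+1}$; I would verify this inequality by induction, exploiting the explicit recursion and the bound $\hat c \le c_{\max}$.
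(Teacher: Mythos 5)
Your IVT framing and the ``negative sign'' half (via Lemma~\ref{lemma:lim_f_c=0} and the monotonicity $f_{\theta,c}(\infty)\le f_{\theta,c}(n)$) match the paper exactly. The gap is in the ``positive sign'' half, where your route diverges from the paper's and is not carried out.

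You restrict to the interval $[0,c_{\max}]$ with $c_{\max}=\theta(2\theta-1)/(1+\theta)^2$ and try to show $f_{\theta,c_{\max}}(\infty)>\tfrac{1}{1+\theta}$, acknowledging that the telescoping/bounding step is ``the main technical obstacle.'' This is indeed an obstacle, and the proposed bridge --- comparing against the series defining $g(\theta)$ --- is unlikely to close it: $g(\theta)=\tilde f_\theta(\infty)$ is merely the $c$-derivative of $c\mapsto f_{\theta,c}(\infty)$ at $c=0_+$, so $g(\theta)<0$ only tells you the function initially \emph{decreases} below $\tfrac{1}{1+\theta}$; it gives no control on the nonlinear behaviour out at $c=c_{\max}$, and Figure~\ref{fig:85} shows the curve first dipping below the line precisely because of this negative slope. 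The paper sidesteps this entirely: it does not restrict to $[0,c_{\max}]$ but instead invokes Lemma~\ref{lemma:f_lower_bound} for any $c\ge 4$, where the uniform lower bound $f_{\theta,c}(n)\ge\tfrac{\theta^n}{2}+\sqrt{(\tfrac12+\theta^{2n})c}$ yields $f_{\theta,c}(\infty)\ge\sqrt{c/2}>\tfrac{1}{1+\theta}$ immediately. The a posteriori constraint $\hat c\le c_{\max}$ is then automatic from \eqref{c_leq_bbbound} in Lemma~\ref{lemma_properties_of_f_theta_c} once $f_{\theta,\hat c}(\infty)=\tfrac{1}{1+\theta}$ is known, which is what you would need for the paper's proof of condition~(i) via Lemma~\ref{lemma:every_alpha}.

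Conditions~(iv) and~(v) of Lemma~\ref{lemma:f_conditions} are also only sketched in your proposal. The paper's verification (Lemma~\ref{lemma:conditions_fulfil}) is genuinely delicate --- for~(iv) it compares the inverse quadratic polynomials of $\alpha_\theta$ and $\beta_\theta$, and for~(v) it uses monotonicity of $\varphi_{\theta,c}(x)=x-\theta\psi_{\theta,c}(x)$ on $\mathcal D_{\theta,c}$ together with $g_{\theta,c}(n)\ge g_{\theta,c}(n-1)$. Your reduction of~(iv) to $f(1)\ge\theta f(0)$ also does not match: $(1+\theta)f(0)\le 2f(1)$ is the same as $f(1)\ge\tfrac{1+\theta}{2}f(0)$, not $f(1)\ge\theta f(0)$, and since $\tfrac{1+\theta}{2}>\theta$ for $\theta<1$, the latter is the weaker inequality and would not suffice.
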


\paragraph{}We will prove this lemma in Section \ref{sec:theta>theta*}.

\begin{remark}
 Figure \ref{fig:fix_c} shows  the values of the parameter $c$ for which we have $ f_{\theta, c}(\infty) = \frac{1}{1+\theta}$ for different values of $\theta \in (0.6,1)$. It shows that if $\theta \le \theta^*$ then the only such value is $c=0$, but if $\theta > \theta^*$ then there also exists a positive value $\hat{c}$. We also note that if $\theta \to 1$ then the numerical simulations suggest that $\hat{c} \to 0.01770838$, which coincides with parameter value of $c$ which gives the non-diagonal solution in the case $\Xi_1=(0,1]$, see \cite[Section 1.6]{RST19}. In other words, $c=0.01770838$ is the unique positive value of  $c$ for which the differential equation \eqref{cont_case} together with the boundary condition $f(0)= \frac 12$ has a solution.
\end{remark}

\begin{figure}[!ht]
	\centering
	\includegraphics[scale=0.33]{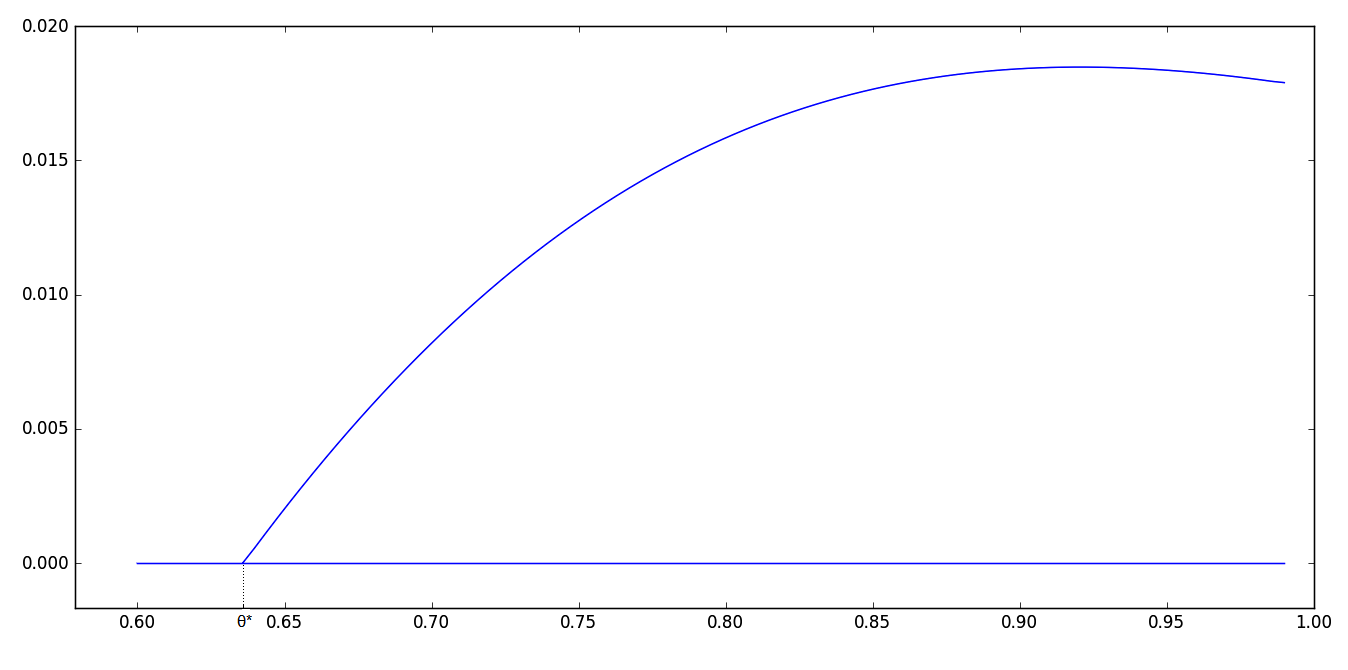}
	\caption{The values of $c$  for which $ f_{\theta, c}(\infty) = \frac{1}{1+\theta}$}
	\label{fig:fix_c}
\end{figure}

\begin{Proof}[of Theorem \ref{the_theorem}]
The diagonal measure $\ov\rho^{(2)}$ defined in \eqref{unnu} is indeed a solution of the bivariate RDE \eqref{bivar_RDE} for every $\theta \in (0,1)$.

If $\theta \le \frac 12$ and $\rho^{(2)} \in \mathcal{M}^{(2)}_{\theta}$ is a solution of the bivariate RDE, then by Lemma \ref{lemma:equivalent_system}(ii) we must have $c=0$, where $c$ is the parameter for which $f_{\rho^{(2)}} \equiv f_{\theta, c}$ (such $c$ exists by Lemma \ref{lemma:equivalent_system}(i)). By \eqref{f_theta_null}
we have $f_{\rho^{(2)}}(n) \equiv \frac{1}{1+\theta}$,  thus by \eqref{signature_of_diagonal} and the uniqueness statement of Lemma~\ref{lemma:f_conditions} we obtain that  there is no scale invariant solution of the bivariate RDE other than the diagonal solution  in the $\theta \le \frac 12$  case.

If $\theta \in \left(\frac 12, \theta^*\right]$ and we assume that $\rho^{(2)} \in \mathcal{M}^{(2)}_{\theta}$ is a solution of the bivariate RDE, then by Lemma \ref{lemma:equivalent_system}  we have $c \in \left[ 0, \frac{\theta \cdot (2\theta - 1)}{(1+\theta)^2} \right]$ for the parameter $c$ which gives $f_{\rho^{(2)}} \equiv f_{\theta, c}$. But by Lemma \ref{lemma:theta<=theta*} we know that there is no $c \in \left( 0, \frac{\theta \cdot (2\theta - 1)}{(1+\theta)^2} \right]$ such that $\lim \limits_{n \to \infty} f_{\theta, c}(n) = \frac{1}{1+\theta}$ holds. Therefore by condition (ii) of Lemma \ref{lemma:f_conditions} we see that again only $c=0$ produces a signature of a scale invariant solution of the bivariate RDE.

If $\theta > \theta^*$ then by Lemmas \ref{lemma:f_conditions} and \ref{lemma:theta>theta*} there exists a measure $\hat{\rho}^{(2)} \in \mathcal{M}^{(2)}_{\theta}$ for which $f_{\theta, \hat{c}}(n) = f_{\hat{\rho}^{(2)}}(n)$ for every $n \in \mathbb{N}$. The measure $\hat{\rho}^{(2)}$ is non-diagonal, as we explain. First note that $\hat{c} \neq 0$ implies $f_{\theta, \hat{c}} \neq f_{\theta, 0}$, thus $f_{\hat{\rho}^{(2)}} \neq f_{\overline{\rho}^{(2)}}$ (since $f_{\overline{\rho}^{(2)}}=f_{\theta, 0}$ by \eqref{signature_of_diagonal} and \eqref{f_theta_null}), and thus we must have $\hat{\rho}^{(2)} \neq \overline{\rho}^{(2)}$, i.e., $\hat{\rho}^{(2)}$ is non-diagonal.

Finally, we observe that $\hat{\rho}^{(2)}$ is a solution of the bivariate RDE \eqref{bivar_RDE} by Lemma \ref{lemma:equivalent_system}(i).
\end{Proof}

\subsection{\texorpdfstring{Conditions for $f$ to be a signature}{Conditions for f to be a signature}}\label{sec:f_conditions}


\paragraph{}In this section we show the necessary and sufficient conditions for a function $f: \mathbb{N} \to \mathbb{R}$ to be the signature of some $\rho^{(2)} \in \mathcal{M}^{(2)}_{\theta}$, i.e., we prove Lemma \ref{lemma:f_conditions}. To do this, first we define the bivariate signature $F_{\rho^{(2)}}$ in Definition \ref{def:rho->F} for each $\rho^{(2)} \in \mathcal{P}^{(2)}_\theta$ (c.f.\ Definition \ref{biv_with_rho_marginals}). In Lemma \ref{lemma:F_characterizes} we prove that this function $F_{\rho^{(2)}}$ characterizes the distribution $\rho^{(2)}$ and  in Lemma \ref{lemma:F_conditions} we prove necessary and sufficient conditions for bivariate functions to be the bivariate signature of some $\rho^{(2)} \in \mathcal{P}^{(2)}_\theta$. After analysing the relation between scale invariant measures and scale invariant bivariate signatures in Lemma \ref{lemma:scale_inv} as well as the relation between $F_{\rho^{(2)}}$ and the univariate signature in Lemma \ref{lemma:F_and_f}, we can easily conclude the proof of Lemma \ref{lemma:f_conditions}.

\begin{definition}[Bivariate signature]\label{def:rho->F}
Given $\rho^{(2)} \in \mathcal{P}^{(2)}_\theta$, we define the bivariate function $F_{\rho^{(2)}} : \{x_k\}_{k=0}^\infty \times \{x_k\}_{k=0}^\infty \to [0,1] $ by
\begin{equation}\label{rho->F}
F_{\rho^{(2)}}(x_k, x_j) := \rho^{(2)} \big( \{ [0,x_k] \times I\} \cup \{ I \times [0,x_j] \} \big), \quad j, k \in \mathbb{N}.
\end{equation}
\end{definition}

Recall the notation from the beginning of Section \ref{section_scale_invarint_solutions_of_RDE}.

\begin{lemma}[Bivariate signature characterizes the measure]\label{lemma:F_characterizes}
The measure $\rho^{(2)} \in \mathcal{P}^{(2)}_\theta$ is uniquely characterized by $F_{\rho^{(2)}}$, in particular, for any $ j, k \in \mathbb{N}$ we have
\begin{align}
\label{first_qq}
\rho^{(2)}\big(\{\infty\}\times\{\infty\}\big)&= 1-F_{\rho^{(2)}}(x_0,x_0),\\
\label{second_qq}
\rho^{(2)}\big([0,x_k]\times\{\infty\}\big)&= F_{\rho^{(2)}}(x_k,x_0) - \frac{1}{1+\theta},\\
\label{third_qq}
\rho^{(2)}\big(\{\infty\}\times[0,x_j]\big)&= F_{\rho^{(2)}}(x_j,x_0) - \frac{1}{1+\theta},\\
\label{F_charact_4}
\rho^{(2)}\big([0,x_k]\times[0,x_j]\big)&= \frac{x_k}{1+\theta} + \frac{x_j}{1+\theta} - F_{\rho^{(2)}}(x_k,x_j).
\end{align}
\end{lemma}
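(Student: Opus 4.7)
\begin{Proof}[proposal]
The plan is to deduce all four identities from a single inclusion–exclusion, together with the explicit form of the marginal $\rho$, and then to argue uniqueness from the fact that $\rho^{(2)}$ is concentrated on a countable set of atoms.

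First I would compute the marginal masses. Since $\rho^{(2)}\in\mathcal{P}^{(2)}_\theta$ has both marginals equal to $\rho$ and $\rho(\{x_i\})=c_i$ with $x_i=\theta^i$ decreasing,
\[
\rho^{(2)}([0,x_k]\times I)=\rho([0,x_k])=\sum_{i=k}^\infty c_i
=\frac{1-\theta}{1+\theta}\cdot\frac{\theta^k}{1-\theta}=\frac{x_k}{1+\theta},
\]
and symmetrically for the second coordinate. Next, for any $k,j\in\mathbb{N}$, inclusion–exclusion applied to the union in \eqref{rho->F} gives
\[
F_{\rho^{(2)}}(x_k,x_j)=\frac{x_k}{1+\theta}+\frac{x_j}{1+\theta}-\rho^{(2)}([0,x_k]\times[0,x_j]),
\]
which is exactly \eqref{F_charact_4}. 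Setting $j=0$ (so $x_0=1$) and using $\rho^{(2)}([0,x_k]\times\{\infty\})=\rho^{(2)}([0,x_k]\times I)-\rho^{(2)}([0,x_k]\times[0,x_0])$ yields
\[
\rho^{(2)}([0,x_k]\times\{\infty\})=\frac{x_k}{1+\theta}-\Bigl(\frac{x_k}{1+\theta}+\frac{1}{1+\theta}-F_{\rho^{(2)}}(x_k,x_0)\Bigr)=F_{\rho^{(2)}}(x_k,x_0)-\frac{1}{1+\theta},
\]
which is \eqref{second_qq}; \eqref{third_qq} follows by the symmetry of $\rho^{(2)}$. Finally, taking $k=j=0$ in \eqref{rho->F} gives $[0,x_0]\times I\cup I\times[0,x_0]=I^2\setminus\{(\infty,\infty)\}$, so \eqref{first_qq} is immediate.

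It remains to argue that $F_{\rho^{(2)}}$ determines $\rho^{(2)}$. Since the marginal $\rho$ is supported on $S:=\{x_k:k\geq 0\}\cup\{\infty\}$, the measure $\rho^{(2)}$ is concentrated on the countable set $S\times S$, so it suffices to recover $\rho^{(2)}(\{(s_1,s_2)\})$ for each $s_1,s_2\in S$. The formulas \eqref{first_qq}–\eqref{F_charact_4} already give the measure of all rectangles of the form $A\times B$ with $A,B\in\{[0,x_k]:k\geq 0\}\cup\{\{\infty\}\}$; taking appropriate differences
\[
\rho^{(2)}(\{x_k\}\times B)=\rho^{(2)}([0,x_k]\times B)-\rho^{(2)}([0,x_{k+1}]\times B),
\]
and similarly in the second coordinate, recovers the mass of every individual atom. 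This proves that $\rho^{(2)}$ is uniquely determined by $F_{\rho^{(2)}}$.

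There is no genuine obstacle here; the only thing to check carefully is the computation of the marginal mass $\rho([0,x_k])=x_k/(1+\theta)$, which is what makes the coefficient $1/(1+\theta)$ appear on the right-hand sides of \eqref{second_qq}–\eqref{F_charact_4}.
\end{Proof}
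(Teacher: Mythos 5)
Your proposal is correct and follows essentially the same approach as the paper's proof: using the marginal computation $\rho([0,x_k])=x_k/(1+\theta)$, inclusion--exclusion on the union defining $F_{\rho^{(2)}}$, and differencing rectangle masses to recover the individual atoms. The only difference is that you spell out the ``readily follow'' steps that the paper leaves implicit.
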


\begin{Proof} The proof of \eqref{first_qq} follows from Definition \ref{def:rho->F} using $x_0=1$ and
$\rho^{(2)}( ([0,1]\cup \{\infty\})^2 )=1$.
 Since the marginal distribution of $\rho^{(2)}$ is $\rho$, for every $j \in \mathbb{N}$ we have
$\rho^{(2)}\big( I \times [0,x_j] \big)=\rho( [0,x_j] )=\sum_{i=j}^\infty c_i=\frac{x_j}{1+\theta}$.
The equalities \eqref{second_qq}, \eqref{third_qq} and \eqref{F_charact_4} readily follow.
The $\rho^{(2)}$ measure of  every atom of $\rho^{(2)}$ can be  determined  using \eqref{first_qq}-\eqref{F_charact_4} and  inclusion-exclusion.
\end{Proof}

\begin{lemma}[Necessary and sufficient conditions on $F$]\label{lemma:F_conditions}
Let $\theta \in (0,1)$. Let us assume that we are given a function $F: \{x_k\}_{k=0}^\infty \times \{x_k\}_{k=0}^\infty \to [0, \infty)$. There exists a unique probability measure $\rho^{(2)} \in \mathcal{P}^{(2)}_\theta$ such that $F \equiv F_{\rho^{(2)}}$ holds (where $F_{\rho^{(2)}}$ is defined in Definition \ref{def:rho->F}) if and only if the following conditions are fulfilled:
\begin{enumerate}[1.]
\item $F(x_0, x_0) \le 1$,
\item $\lim \limits_{k \to \infty} F(x_k, x_j) = \frac{x_j}{1+\theta} \quad \forall \, j \in \mathbb{N}$,
\item $F(x_k, x_0)$ is non-increasing in $k$,
\item $F(x_k, x_j) = F(x_j, x_k) \quad \forall \, j, k \in \mathbb{N}$,
\item $-F(x_k, x_j) + F(x_{k+1}, x_j) + F(x_k, x_{j+1}) - F(x_{k+1}, x_{j+1}) \ge 0 \quad \forall \, j, k \in \mathbb{N}$.
\end{enumerate}
\end{lemma}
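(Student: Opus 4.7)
The plan is to prove both directions by exploiting the fact that every $\rho^{(2)}\in\Pc^{(2)}_\tet$ must be purely atomic, concentrated on the countable grid $G:=(\{x_k\}_{k\geq 0}\cup\{\infty\})^2$, since its marginals equal $\rho$ and $\rho$ is supported on $\{x_k\}_{k\geq 0}\cup\{\infty\}$. Hence any such $\rho^{(2)}$ is uniquely determined by its values on the atoms of $G$, which by Lemma~\ref{lemma:F_characterizes} can in turn be recovered from $F_{\rho^{(2)}}$ via inclusion--exclusion. This reduces the problem to constructing nonnegative atom weights from $F$.

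For the forward direction, each of the five conditions follows immediately from the definition
$F_{\rho^{(2)}}(x_k,x_j)=\rho^{(2)}\big([0,x_k]\times I\cup I\times[0,x_j]\big)$:
condition~1 because the right-hand side is a probability; condition~2 because $[0,x_k]\times I$ shrinks to a $\rho^{(2)}$-null set as $k\to\infty$, leaving the marginal $\rho([0,x_j])=x_j/(1+\tet)$; condition~3 by the monotonicity of $[0,x_k]\times I$ in $k$; condition~4 by the symmetry of $\rho^{(2)}$; and condition~5 because a short inclusion--exclusion calculation (the $\rho^{(2)}(A_k)$ and $\rho^{(2)}(B_j)$ single-set terms cancel) identifies the mixed second difference $-F(x_k,x_j)+F(x_{k+1},x_j)+F(x_k,x_{j+1})-F(x_{k+1},x_{j+1})$ with $\rho^{(2)}\!\big((x_{k+1},x_k]\times(x_{j+1},x_j]\big)\geq 0$.

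For the converse, given $F$ satisfying the five conditions, I would define a candidate atomic measure $\rho^{(2)}$ on $G$ by inverting the formulas of Lemma~\ref{lemma:F_characterizes}:
\begin{align*}
\rho^{(2)}\big(\{(\infty,\infty)\}\big)&:=1-F(x_0,x_0),\\
\rho^{(2)}\big(\{(x_k,\infty)\}\big)&:=F(x_k,x_0)-F(x_{k+1},x_0),\\
\rho^{(2)}\big(\{(x_k,x_j)\}\big)&:=F(x_{k+1},x_j)+F(x_k,x_{j+1})-F(x_k,x_j)-F(x_{k+1},x_{j+1}),
\end{align*}
together with the symmetric formula for $(\infty,x_j)$. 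Nonnegativity of these weights follows from condition~1 for $(\infty,\infty)$, from condition~3 for $(x_k,\infty)$ and $(\infty,x_j)$ (together with condition~2, which forces $F(\cdot,x_0)\geq 1/(1+\tet)$ so the extracted marginal mass at $\infty$ is also nonnegative), and from condition~5 for $(x_k,x_j)$. Symmetry of $\rho^{(2)}$ follows from condition~4.

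The remaining task is to verify that the resulting measure is a probability measure with marginals $\rho$, and that $F_{\rho^{(2)}}=F$. Both reduce to telescoping: fixing $k$ and summing $\rho^{(2)}(\{(x_k,x_j)\})$ over $j\geq 0$, the interior contributions collapse and the boundary limit is supplied by condition~2 (applied with $j\to\infty$, using symmetry to transfer it to the second coordinate); adding $\rho^{(2)}(\{(x_k,\infty)\})$ yields $(x_k-x_{k+1})/(1+\tet)=c_k=\rho(\{x_k\})$, and the analogous computation at $\infty$ gives $\tet/(1+\tet)=\rho(\{\infty\})$. This simultaneously establishes that the total mass is $1$ and that the first (hence, by symmetry, also the second) marginal is $\rho$. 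The identity $F_{\rho^{(2)}}\equiv F$ then follows by summing the atomic definitions over $[0,x_k]\times I\cup I\times[0,x_j]$ and recognising the inclusion--exclusion expansion in reverse. I expect the only nontrivial issue to be the bookkeeping of these telescoping sums and careful use of condition~2 to justify the limits; there are no conceptual obstacles beyond that.
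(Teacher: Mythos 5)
Your proposal is correct and follows essentially the same route as the paper: prove the five conditions are necessary directly from the definition (identifying the mixed second difference with an atom mass for condition 5), invoke Lemma~\ref{lemma:F_characterizes} for uniqueness, and for sufficiency construct the measure atom-by-atom via the inverse inclusion--exclusion formulas, then verify nonnegativity, symmetry, and the marginal $\rho$ by telescoping. The only small imprecision is in your parenthetical about condition~2 and nonnegativity of the atoms on $\{x_k\}\times\{\infty\}$ --- those are already nonnegative from condition~3 alone; condition~2 is needed only later, to identify the telescoped limits and hence confirm the marginals and total mass, which is exactly how the paper uses it.
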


\begin{Proof}
If $\rho^{(2)} \in \mathcal{P}^{(2)}_\theta$ and we define $F_{\rho^{(2)}}$ as in \eqref{rho->F}, then conditions 1., 2.\ and 3.\  trivially hold for $F_{\rho^{(2)}}$. Condition 4.\ follows from the symmetry of $\rho^{(2)}$ and finally condition 5.\ also holds, since $-F_{\rho^{(2)}}(x_k, x_j) + F_{\rho^{(2)}}(x_{k+1}, x_j) + F_{\rho^{(2)}}(x_k, x_{j+1}) - F_{\rho^{(2)}}(x_{k+1}, x_{j+1}) = \rho^{(2)}(x_k, x_j)$, where $\rho^{(2)}(x_k, x_j)$ is a shorthand for $\rho^{(2)} \big( \{ (x_k, x_j) \} \big)$.

In the other direction, the uniqueness statement follows from Lemma \ref{lemma:F_characterizes}.

If $F$ is a function such that all of the conditions of the lemma hold, then we will define $\rho^{(2)}$ pointwise on  $\{(x_k, x_j)\}_{k,j=-1}^\infty$ (where $x_{-1}=\infty$, c.f.\ \eqref{x_minus_one}) and prove that $\rho^{(2)}$ is a probability measure, it is in $\mathcal{P}^{(2)}_\theta$ and $F \equiv F_{\rho^{(2)}}$ holds.
For every $j, k \in \mathbb{N}$ let
\begin{align}
&\rho^{(2)}(x_k, x_j) := -F(x_k, x_j) + F(x_{k+1}, x_j) + F(x_k, x_{j+1}) - F(x_{k+1}, x_{j+1}),\\
&\rho^{(2)}(\infty, x_k) := F(x_k, x_0) - F(x_{k+1}, x_0),\\
&\rho^{(2)}(x_k, \infty) := F(x_k, x_0) - F(x_{k+1}, x_0),\\
&\rho^{(2)}(\infty, \infty) := 1 - F(x_0, x_0).
\end{align}
The non-negativity of $\rho^{(2)}$ follows from conditions 1., 3.\ and 5., moreover $\rho^{(2)}$ is trivially symmetric, since $F$ is also symmetric by 4.

The marginals of $\rho^{(2)}$:
\begin{align}
\bullet &\sum \limits_{i=-1}^\infty \rho^{(2)}(x_k, x_i) = F(x_k, x_0) - F(x_{k+1}, x_0) +\nonumber\\
&+ \sum \limits_{i=0}^\infty (-F(x_k, x_i) + F(x_{k+1}, x_i) + F(x_k, x_{i+1}) - F(x_{k+1}, x_{i+1})) =\nonumber\\
&= F(x_k, x_0) - F(x_{k+1}, x_0) - F(x_k, x_0) + \lim \limits_{i \to \infty} F(x_k, x_i) + F(x_{k+1}, x_0) - \nonumber\\
&- \lim \limits_{i \to \infty} F(x_{k+1}, x_i) \stackrel{2.}{=} \frac{(1-\theta) \cdot x_k}{1+\theta} \stackrel{\eqref{x_and_c_def}}{=} c_k, \qquad  k \in \mathbb{N}\\
\bullet &\sum \limits_{i=-1}^\infty \rho^{(2)}(\infty, x_i) = 1 - F(x_0, x_0) + \sum \limits_{k=0}^\infty (F(x_k, x_0) - F(x_{k+1}, x_0)) =\nonumber\\
&= 1 - F(x_0, x_0) + F(x_0, x_0) - \lim \limits_{k \to \infty} F(x_k, x_0) \stackrel{2.}{=}1-\frac{x_0}{1+\theta} = c_{-1}
\end{align}
So the measure $\rho^{(2)}$ has marginal distributions $\rho$ defined as in \eqref{rho}.
In particular, $\rho^{(2)}$ is a probability measure on $I^2$.
We still have to check that $F \equiv F_{\rho^{(2)}}$ holds:
\begin{multline}
 F_{\rho^{(2)}}(x_k, x_j) \stackrel{\eqref{rho->F}}{=} \rho^{(2)} \big( \{ [0,x_k] \times I\} \cup \{ I \times [0,x_j] \} \big)
 = \sum \limits_{i=k}^\infty c_i + \sum \limits_{l=j}^\infty \sum \limits_{i=-1}^{k-1} \rho^{(2)} (x_i,x_l) =  \\
 \frac{x_k}{1+\theta} + \sum \limits_{l=j}^\infty (-F(x_0, x_l) + F(x_k, x_l) + F(x_0, x_{l+1}) - F(x_k, x_{l+1}))
+ F(x_0, x_j) - \lim \limits_{j \to \infty} F(x_0, x_j) = \\ \frac{x_k}{1+\theta} - F(x_0, x_j) + \lim \limits_{l \to \infty} F(x_0, x_l)
+ F(x_k, x_j) - \lim \limits_{l \to \infty} F(x_k, x_l) + F(x_0, x_j) - \lim \limits_{j \to \infty} F(x_0, x_j) \stackrel{2.}{=}\\ F(x_k, x_j), \qquad  j, k \in \mathbb{N}.
\end{multline}
\end{Proof}

\begin{definition}[Scale invariant bivariate function]
 $F: \{x_k\}_{k=0}^\infty \times \{x_k\}_{k=0}^\infty \to [0, \infty)$ is a scale invariant bivariate function if
\begin{equation}\label{F_scale_inv}
F(x_{k+l}, x_{j+l}) = \theta^l F(x_k, x_j), \quad j, k, l \in \mathbb{N}.
\end{equation}
\end{definition}

If $F$ is scale invariant then for every $0 \le j \le k$ we have
\begin{equation}\label{F_scale_inv_remark}
 F(x_k, x_j) = \theta^j F(x_{k-j}, x_0).
\end{equation}

Recall the notation of $\Mi^{(2)}_\tet$ from below (\ref{rhosca}) as well as that of $\mathcal{P}^{(2)}_\theta$ from Definition \ref{biv_with_rho_marginals}.
\begin{lemma}[Scale invariant measures and functions]\label{lemma:scale_inv} Let  $\rho^{(2)} \in \mathcal{P}^{(2)}_\theta$.
$\rho^{(2)} \in \Mi^{(2)}_\tet$ holds if and only if $F_{\rho^{(2)}}$ defined in Definition \ref{def:rho->F} is a scale invariant function.
\end{lemma}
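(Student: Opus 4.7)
The plan is to translate the scale-invariance condition for $F_{\rho^{(2)}}$ into the $\Ga^{(2)}_\tet$-invariance appearing in the definition of $\Mi^{(2)}_\tet$, via Lemma~\ref{lemma:F_characterizes}, and then to show that the remaining requirements for membership in $\Mi^{(2)}_\tet$ -- namely the bound \eqref{Mi2def} and the invariance under $\Ga^{(2)}_t$ for \emph{every} $t\in\Xi_\tet$ -- all follow from this single instance.

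The central computation I would carry out is
\be
F_{\Ga^{(2)}_\tet\rho^{(2)}}(x_k,x_j)=\tet^{-1}F_{\rho^{(2)}}(x_{k+1},x_{j+1}),
\ee
obtained as follows. First, $(\infty,\infty)\notin[0,x_k]\times I\cup I\times[0,x_j]$, so the $\de_{(\infty,\infty)}$ contribution in \eqref{Ga2} vanishes. Second, from \eqref{psit} we get $(\psi_\tet)^{-1}([0,x_k])=[0,\tet x_k]=[0,x_{k+1}]$, hence $(\psi^{(2)}_\tet)^{-1}\big([0,x_k]\times I\cup I\times[0,x_j]\big)=[0,x_{k+1}]\times I\cup I\times[0,x_{j+1}]$. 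Combining gives the displayed identity.

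One direction is then immediate: if $\rho^{(2)}\in\Mi^{(2)}_\tet$ then $\Ga^{(2)}_\tet\rho^{(2)}=\rho^{(2)}$, so $F_{\rho^{(2)}}(x_{k+1},x_{j+1})=\tet F_{\rho^{(2)}}(x_k,x_j)$, which yields \eqref{F_scale_inv} by iteration. For the converse, assuming $F_{\rho^{(2)}}$ is scale invariant, the displayed identity gives $F_{\Ga^{(2)}_\tet\rho^{(2)}}=F_{\rho^{(2)}}$; to apply Lemma~\ref{lemma:F_characterizes} and conclude $\Ga^{(2)}_\tet\rho^{(2)}=\rho^{(2)}$, I will verify that $\Ga^{(2)}_\tet\rho^{(2)}\in\mathcal{P}^{(2)}_\tet$: symmetry is inherited, and the marginals equal $\Ga_\tet\rho_{\Xi_\tet}=\rho_{\Xi'}$ by Lemma~\ref{L:scaleRDE} with $\Xi'=\{\tet^{-1}y:y\in\Xi_\tet\}\cap[0,1]=\Xi_\tet$. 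The identity $\Ga^{(2)}_s\circ\Ga^{(2)}_t=\Ga^{(2)}_{st}$ (a direct consequence of $\psi_t\circ\psi_s=\psi_{st}$ for $s,t\leq 1$) then upgrades $\Ga^{(2)}_\tet$-invariance to $\Ga^{(2)}_{\tet^l}$-invariance for every $l\in\N$, which is part~(iii) of the definition of $\Mi^{(2)}_\tet$.

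The last step is to verify \eqref{Mi2def}. Taking $A=J^2[1]=I^2\setminus\{(\infty,\infty)\}$ in $\Ga^{(2)}_{\tet^l}\rho^{(2)}=\rho^{(2)}$, and using $(\psi^{(2)}_{\tet^l})^{-1}(J^2[1])=J^2[\tet^l]$, I obtain $\rho^{(2)}(J^2[\tet^l])=\tet^l\rho^{(2)}(J^2[1])\leq\tet^l$. Since $\rho^{(2)}\in\mathcal{P}^{(2)}_\tet$ has marginals $\rho_{\Xi_\tet}$ and is therefore supported on $(\Xi_\tet\cup\{\infty\})^2$, the bound extends to general $t\in(0,1]$: for $\tet^{k+1}<t\leq\tet^k$, the sets $J^2[t]$ and $J^2[\tet^{k+1}]$ (respectively $J^2[\tet^k]$ when $t=\tet^k$) have identical intersection with the support, so $\rho^{(2)}(J^2[t])\leq t$ follows. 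I do not anticipate a substantive obstacle; the only delicate point is the support book-keeping required to extend the bound from the atoms of $\Xi_\tet$ to all $t\in(0,1]$.
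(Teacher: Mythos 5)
Your central computation $F_{\Ga^{(2)}_\tet\rho^{(2)}}(x_k,x_j)=\tet^{-1}F_{\rho^{(2)}}(x_{k+1},x_{j+1})$ is exactly the identity the paper uses, and the forward direction is fine. But the converse, as you have written it, has a genuine ordering problem. To invoke Lemma~\ref{lemma:F_characterizes} you assert $\Ga^{(2)}_\tet\rho^{(2)}\in\mathcal{P}^{(2)}_\tet$ after checking only symmetry and marginals. That is not sufficient: a symmetric \emph{signed} measure on $I^2$ with both marginals equal to $\rho_{\Xi_\tet}$ need not be a nonnegative measure. The sign problem for $\Ga^{(2)}_\tet\rho^{(2)}$, defined in \eqref{Ga2}, is concentrated at $\{(\infty,\infty)\}$, where the mass equals $\tet^{-1}\rho^{(2)}\big(((\tet,1]\cup\{\infty\})^2\big)+1-\tet^{-1}$; this is nonnegative precisely when $\rho^{(2)}(J^2[\tet])\leq\tet$, which is the bound \eqref{Mi2def} at $t=\tet$. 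But in your plan this bound is ``the last step,'' and moreover you derive it there \emph{from} $\Ga^{(2)}_{\tet^l}$-invariance, which in turn you obtained from the very application of Lemma~\ref{lemma:F_characterizes} that needed the bound. As written, the argument is circular.

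The repair is small and brings you to the paper's order of operations: the bound \eqref{Mi2def} follows directly from scale invariance of $F_{\rho^{(2)}}$ without any reference to $\Ga^{(2)}_t$, since
\begin{equation}
\rho^{(2)}\big(J^2[\tet^n]\big)=F_{\rho^{(2)}}(x_n,x_n)\stackrel{\eqref{F_scale_inv}}{=}\tet^n F_{\rho^{(2)}}(x_0,x_0)\leq\tet^n,
\end{equation}
and then the support bookkeeping you already describe (both marginals are $\rho_{\Xi_\tet}$, so $\rho^{(2)}$ is concentrated on $(\Xi_\tet\cup\{\infty\})^2$) extends this to all $t\in(0,1]$. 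This gives $\rho^{(2)}\in\Mi^{(2)}$ first; then $\Ga^{(2)}_\tet\rho^{(2)}\in\Mi^{(2)}\subset\mathcal{P}^{(2)}_\tet$ by Lemma~\ref{L:mapdef}, after which your application of Lemma~\ref{lemma:F_characterizes} is legitimate and the rest of your argument (semigroup upgrade to all $t\in\Xi_\tet$) goes through. Once reordered this way, your proof coincides in substance with the paper's.
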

\begin{Proof} First note that if $F_{\rho^{(2)}}$  is a scale invariant function then
\begin{equation}
  \rho^{(2)}\big([0,\theta^n]\times I\cup I\times[0,\theta^n]\big) \stackrel{ \eqref{rho->F}  }{=} F_{\rho^{(2)}}(x_n,x_n) \stackrel{ \eqref{F_scale_inv} }{=}
  \theta^n F_{\rho^{(2)}}(x_0,x_0)  \leq \theta^n, \qquad n \geq 0.
\end{equation}
 Together with our assumption that both of the marginals of $\rho^{(2)}$ are $\rho$, this implies that we have $\rho^{(2)}\big([0,t]\times I\cup I\times[0,t]\big) \leq t $
for all $0 \leq t \leq 1$, thus by \eqref{Mi2def} we have $\rho^{(2)} \in \Mi^{(2)}$.

It remains to show that if $\rho^{(2)} \in \Mi^{(2)} \cap \mathcal{P}^{(2)}_\theta$ then $F_{\rho^{(2)}}$ is scale invariant if and only if $\Ga_\tet^{(2)} \rho^{(2)}=\rho^{(2)}$.
Let  $\hat{\rho}^{(2)}:=\Ga_\tet^{(2)}\rho^{(2)}$. By the scale invariance of the marginal distribution (c.f.\ \eqref{rhosca})  we have
$\hat{\rho}^{(2)} \in \mathcal{P}^{(2)}_\theta$. Thus by Lemma \ref{lemma:F_characterizes} we only need to prove that $F_{\hat{\rho}^{(2)}} \equiv F_{\rho^{(2)}}$ holds if and only
$ \theta^{-1} F_{\rho^{(2)}}(x_{k+1}, x_{j+1}) =  F_{\rho^{(2)}}(x_k, x_j)$ holds for any $ k,j \in \mathbb{N}$ (i.e., $F_{\rho^{(2)}}$ satisfies the $l=1$ case of \eqref{F_scale_inv}).  This equivalence follows as soon as we observe that we have
\begin{multline}
F_{\hat{\rho}^{(2)}}(x_k,x_j)\stackrel{ \eqref{rho->F} }{=} \Ga_\tet^{(2)}\rho^{(2)} \big( \{ [0,x_k] \times I\} \cup \{ I \times [0,x_j] \} \big)
\stackrel{ \eqref{Ga2} }{=} \\
 \theta^{-1} \rho^{(2)} \big( (\psi^{(2)}_\theta)^{-1} \left( \{ [0,x_k] \times I\} \cup \{ I \times [0,x_j] \} \right)   \big) \stackrel{(*)}{=}
  \theta^{-1} \rho^{(2)}\big( \{ [0,\theta x_k] \times I\} \cup \{ I \times [0, \theta x_j] \} \big)\stackrel{\eqref{x_and_c_def}}{=} \\
   \theta^{-1} \rho^{(2)}\big( \{ [0, x_{k+1}] \times I\} \cup \{ I \times [0,  x_{j+1}] \} \big)\stackrel{ \eqref{rho->F} }{=}
   \theta^{-1}F_{\rho^{(2)}}(x_{k+1},x_{j+1}), \qquad  k,j \in \mathbb{N},
\end{multline}
where $(*)$ holds since we defined $\psi^{(2)}_\theta : I^2\to I^2$ by
$\psi^{(2)}_\theta(x,x'):=\big(\psi_\theta(x),\psi_\theta(x')\big)$, where $\psi_\theta(x)= x/\theta$ if $x\leq \theta$ and $\psi_\theta(x)=\infty$ if $x>\theta$, cf.\ \eqref{psit}.
\end{Proof}

\begin{lemma}[Relationship between $F_{\rho^{(2)}}$ and the signature]\label{lemma:F_and_f}
If $\rho^{(2)} \in \mathcal{M}^{(2)}_{\theta}$, $F_{\rho^{(2)}}$ is the function defined in Definition \ref{def:rho->F} and $f_{\rho^{(2)}}$ is the signature of $\rho^{(2)}$ (c.f. Definition \ref{def:rho->f}), then
\begin{equation}\label{F_and_f}
f_{\rho^{(2)}}(n) =  F_{\rho^{(2)}}(x_n, x_0) = \frac{1}{x_{k \wedge j}} F_{\rho^{(2)}}(x_k, x_j)
\end{equation}
holds for every $j, k \in \mathbb{N}$ for which $n = |k-j|$.
\end{lemma}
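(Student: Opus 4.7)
The plan is to verify the two equalities separately, both of which should reduce to unpacking definitions combined with the scale invariance established in Lemma \ref{lemma:scale_inv}.

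First I would establish the equality $f_{\rho^{(2)}}(n) = F_{\rho^{(2)}}(x_n, x_0)$ by directly comparing the definitions \eqref{rho->f} and \eqref{rho->F}. Since $x_0 = \theta^0 = 1$, the set $\{[0, x_n] \times I\} \cup \{I \times [0, x_0]\}$ appearing in $F_{\rho^{(2)}}(x_n, x_0)$ coincides with the set $\{[0, x_n] \times I\} \cup \{I \times [0, 1]\}$ appearing in $f_{\rho^{(2)}}(n)$, so the two quantities are equal by definition. This step is purely notational and requires no computation.

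For the second equality, I would invoke Lemma \ref{lemma:scale_inv}, which tells us that because $\rho^{(2)} \in \mathcal{M}^{(2)}_\theta$, the bivariate signature $F_{\rho^{(2)}}$ satisfies the scale invariance relation \eqref{F_scale_inv}. Using the symmetry condition~4 of Lemma \ref{lemma:F_conditions} (i.e.\ $F_{\rho^{(2)}}(x_k, x_j) = F_{\rho^{(2)}}(x_j, x_k)$), I may assume without loss of generality that $j \leq k$, so that $n = k - j$ and $k \wedge j = j$. Then applying the consequence \eqref{F_scale_inv_remark} of scale invariance yields
\[
F_{\rho^{(2)}}(x_k, x_j) = \theta^j F_{\rho^{(2)}}(x_{k-j}, x_0) = x_j F_{\rho^{(2)}}(x_n, x_0),
\]
so dividing by $x_{k \wedge j} = x_j$ gives the desired relation. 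The case $k \leq j$ is identical by symmetry.

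There is no real obstacle here: the lemma is essentially a bookkeeping statement that translates between the one-parameter signature $f_{\rho^{(2)}}$ and the two-parameter object $F_{\rho^{(2)}}$ restricted to the diagonal shifts. All the work is done by the preceding lemmas (Definitions \ref{def:rho->f} and \ref{def:rho->F} for the first equality, Lemma \ref{lemma:scale_inv} together with \eqref{F_scale_inv_remark} for the second), so the proof amounts to assembling them in the right order.
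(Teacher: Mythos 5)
Your proposal is correct and follows essentially the same route as the paper's own proof: the first equality is definitional (using $x_0=1$), and the second follows from scale invariance of $F_{\rho^{(2)}}$ via Lemma~\ref{lemma:scale_inv}, \eqref{F_scale_inv_remark}, and symmetry. The only cosmetic difference is that you cite condition~4 of Lemma~\ref{lemma:F_conditions} for the symmetry of $F_{\rho^{(2)}}$, whereas the paper simply appeals to symmetry directly (it also follows immediately from the definition of $F_{\rho^{(2)}}$ and the symmetry of $\rho^{(2)}$).
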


\begin{Proof}
The first equality is trivial from the definition of $f_{\rho^{(2)}}$ and $F_{\rho^{(2)}}$. The second equality follows from the fact that $F_{\rho^{(2)}}$ is a scale invariant function (by Lemma \ref{lemma:scale_inv}), $x_{k \wedge j } =  \theta^{k \wedge j}$ (cf.\ \eqref{x_and_c_def}), \eqref{F_scale_inv_remark} and symmetry of $F_{\rho^{(2)}}$ (cf. Condition 4. of Lemma \ref{lemma:F_conditions}).
\end{Proof}

\begin{Proof}[of Lemma \ref{lemma:f_conditions}]
First let $\rho^{(2)} \in \mathcal{M}^{(2)}_{\theta}$ and $f_{\rho^{(2)}}$ be its signature.
Let us also define  $F_{\rho^{(2)}}$ as in \eqref{rho->F}, which means $f_{\rho^{(2)}}(n) =  F_{\rho^{(2)}}(x_n, x_0)$ by Lemma \ref{lemma:F_and_f}.

Let us now check that $f_{\rho^{(2)}}$ satisfies the properties (i)-(v) of Lemma \ref{lemma:f_conditions}.
\begin{enumerate}
\item $f_{\rho^{(2)}}(0) =  F_{\rho^{(2)}}(x_0, x_0) \le 1$  using condition 1. of Lemma \ref{lemma:F_conditions}.

\item $ \lim \limits_{n \to \infty} f_{\rho^{(2)}}(n)  = \frac{1}{1+\theta}$ by \eqref{F_and_f} and condition 2. of Lemma \ref{lemma:F_conditions}.

\item We have
$ f_{\rho^{(2)}}(n) = F_{\rho^{(2)}}(x_n, x_0) \ge  F_{\rho^{(2)}}(x_{n+1}, x_0) = f_{\rho^{(2)}}(n+1)$ for any $n \in \mathbb{N}$ by condition 3. of Lemma \ref{lemma:F_conditions}.

\item From condition 5. of Lemma \ref{lemma:F_conditions} we know
\begin{equation}\label{F_cond_from_rho_nonneg}
-F_{\rho^{(2)}}(x_k, x_j) + F_{\rho^{(2)}}(x_{k+1}, x_j) + F_{\rho^{(2)}}(x_k, x_{j+1}) - F_{\rho^{(2)}}(x_{k+1}, x_{j+1}) \ge 0.
\end{equation}

Using $F_{\rho^{(2)}}(x_k, x_j) = x_{k \wedge j} \cdot f_{\rho^{(2)}}(|k-j|)$ (c.f.\ \eqref{F_and_f}) and substituting $j:=k$ into \eqref{F_cond_from_rho_nonneg} we obtain
\begin{equation}
 - x_k \cdot f_{\rho^{(2)}}(0) + x_k \cdot f_{\rho^{(2)}}(1) + x_k \cdot f_{\rho^{(2)}}(1) - x_{k+1} \cdot f_{\rho^{(2)}}(0) \ge 0 .
\end{equation}
Dividing by $x_k$, after rearranging we get condition (iv).

\item If we use $F_{\rho^{(2)}}(x_k, x_j) = x_{k \wedge j} \cdot f_{\rho^{(2)}}(|k-j|)$ in \eqref{F_cond_from_rho_nonneg} if $k > j$, we get
\begin{equation}\label{four_terms_signiture}
-x_j \cdot f_{\rho^{(2)}}(k-j) + x_j \cdot f_{\rho^{(2)}}(k+1-j) + x_{j+1} \cdot f_{\rho^{(2)}}(k-j-1)
- x_{j+1} \cdot f_{\rho^{(2)}}(k-j) \ge 0 .
\end{equation}
Let $n := k-j$. If we divide \eqref{four_terms_signiture} by $x_j$, after rearranging we get the required inequality.
\end{enumerate}

In the other direction, assume that $f: \mathbb{N} \to \mathbb{R}$ satisfies conditions (i)-(v) of Lemma \ref{lemma:f_conditions}. Our goal is to show that
there exists a unique probability measure $\rho^{(2)} \in \mathcal{M}^{(2)}_{\theta}$ such that $f$ is its signature. As a first step, we define
\begin{equation}\label{f_F_again}
F(x_k, x_j) := x_{k \wedge j} \cdot f(|k-j|), \qquad j, k \in \mathbb{N}
\end{equation}
 and show that the conditions of Lemma \ref{lemma:F_conditions} hold for $F$.
Conditions (i), (ii), (iii) of Lemma \ref{lemma:f_conditions} on $f$ imply respectively conditions 1., 2.\ and 3.\ of Lemma \ref{lemma:F_conditions}.
 Condition 4.\ of Lemma \ref{lemma:F_conditions} of $F$ is straightforward from \eqref{f_F_again}.
Condition 5.\ of Lemma \ref{lemma:F_conditions} follows from condition (iv) of Lemma \ref{lemma:f_conditions} (in the $k=j$ case) and condition (v) of Lemma \ref{lemma:f_conditions} (in the $k > j$ case, and, by symmetry, in the $j > k$ case).

We can thus apply Lemma \ref{lemma:F_conditions} to infer that there exists a probability measure $\rho^{(2)} \in \mathcal{P}^{(2)}_\theta$ such that $F \equiv F_{\rho^{(2)}}$ holds.
In fact $\rho^{(2)} \in \mathcal{M}^{(2)}_{\theta}$ by Lemma \ref{lemma:scale_inv} and the scale invariance of $F$ (c.f.\ \eqref{F_scale_inv}), which is straightforward from \eqref{f_F_again}.
Finally $f \equiv f_{\rho^{(2)}}$ follows from Definition \ref{def:rho->f}, Lemma \ref{lemma:F_and_f} and \eqref{f_F_again}.
   Uniqueness is clear since the signature of a bivariate measure in
$\mathcal{M}^{(2)}_{\theta}$ uniquely determines its bivariate signature by Lemma \ref{lemma:F_and_f}, which in turn uniquely determines the bivariate measure by Lemma \ref{lemma:F_characterizes}.

\end{Proof}

\subsection{\texorpdfstring{Basic properties of $f_{\theta, c}(n)$}{Basic properties of f theta c(n)}}\label{sec:f_unique}

The main goal of Section \ref{sec:f_unique} is to prove Lemmas \ref{lemma:imp_eq_for_f_theta_c} and \ref{lemma:f_continuous}, but we will also collect some other useful properties of $f_{\theta, c}(n)$ in Corollary \ref{corr_f_inc_dec}. We will first define an auxiliary function $g_{\theta, c}(n), n \in \mathbb{N}$ and later we will identify $f_{\theta, c}(n)$ as $f_{\theta, c}(n)=\theta^n g_{\theta, c}(n)$. In order to construct $g_{\theta, c}(n)$, we need the following definition.

\begin{definition}[Recursion map $\psi_{\theta,c}$]\label{def_psi}
 Given some $\theta \in (0,1)$ and $c \geq 0$,  let us define the function $\psi_{\theta,c}: \mathcal{D}_{\theta,c} \to \mathbb{R}$  by
 \begin{equation}\label{def_eq__psi_theta_c}
  \psi_{\theta,c}(x)=\frac{1+\sqrt{(2x-1)^2-4c \cdot (1-\theta^2)}}{2\theta}, \quad \mathcal{D}_{\theta, c}=(\sqrt{(1-\theta^2)c}+1/2,  +\infty).
 \end{equation}
\end{definition}
Note that $x \in \mathcal{D}_{\theta,c} $ if and only if $2x-1\geq 0$ and  $(2x-1)^2-4c \cdot (1-\theta^2) > 0$.

\begin{lemma}[Recursive definition of $g_{\theta, c}(n)$]\label{lemma_rec_def_of_g}
For any $\theta \in (0,1)$ and $c \geq 0$, the recursion
\begin{equation}\label{g_recursion}
  g_{\theta, c}(0)=\frac{1+\sqrt{1+8c(1+\theta)^2}}{2(1+\theta)}, \quad
  g_{\theta, c}(n)= \psi_{\theta,c}\big(g_{\theta, c}(n-1)\big), \quad n \geq 1
\end{equation}
has a solution (i.e., $g_{\theta, c}(n) \in \mathcal{D}_{\theta,c}$ holds for all $n \geq 0$). Moreover, the solution satisfies
\begin{equation}
\label{g_increasing}
 g_{\theta, c}(n)  \geq g_{\theta, c}(n-1), \qquad n \geq 1.
\end{equation}
\end{lemma}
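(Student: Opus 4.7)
The plan is to identify a forward-invariant region for the dynamics and show that $g_{\theta,c}(0)$ lies in it. Define $x^\ast := \frac{1+\sqrt{1+4c(1+\theta)^2}}{2(1+\theta)}$, the positive root of $(1+\theta)x^2 - x - c(1+\theta) = 0$. I would establish three facts: (1) $\psi_{\theta,c}(x) \geq x$ for every $x \in \mathcal{D}_{\theta,c}$ with $x \geq x^\ast$; (2) $g_{\theta,c}(0) \geq x^\ast$; and (3) $g_{\theta,c}(0) > \tfrac{1}{2} + \sqrt{(1-\theta^2)c}$, i.e., $g_{\theta,c}(0) \in \mathcal{D}_{\theta,c}$. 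Given these, a short induction delivers both conclusions of the lemma: if $g_{\theta,c}(n) \geq g_{\theta,c}(0)$ then $g_{\theta,c}(n) \in \mathcal{D}_{\theta,c}$ (so $\psi_{\theta,c}(g_{\theta,c}(n))$ is defined) and $g_{\theta,c}(n) \geq x^\ast$, whence $g_{\theta,c}(n+1) := \psi_{\theta,c}(g_{\theta,c}(n)) \geq g_{\theta,c}(n) \geq g_{\theta,c}(0)$, which closes the induction.

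For (1), the inequality $\psi_{\theta,c}(x) \geq x$ on $x \geq 1/2$ reads $1 + \sqrt{(2x-1)^2 - 4c(1-\theta^2)} \geq 2\theta x$. If $2\theta x \leq 1$ this is immediate; otherwise both sides are positive, and squaring yields, after routine algebra, exactly $(1+\theta)x^2 - x - c(1+\theta) \geq 0$, which holds for $x \geq x^\ast$ (the other root being nonpositive). For (2), observe that $g_{\theta,c}(0)$ and $x^\ast$ are both positive solutions of equations of the form $x^2 - x/(1+\theta) = K$ with $K = 2c$ and $K = c$ respectively; since $x \mapsto x^2 - x/(1+\theta)$ is strictly increasing on $[1/(2(1+\theta)),\infty)$ and both solutions lie in this range, $2c \geq c$ yields $g_{\theta,c}(0) \geq x^\ast$, with equality iff $c=0$.

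The most delicate step is (3). After clearing the denominator $2(1+\theta)$ and rearranging, it becomes $\sqrt{1+8c(1+\theta)^2} > \theta + 2(1+\theta)\sqrt{(1-\theta^2)c}$. Both sides are positive, so I may square and rearrange; the result is a quadratic inequality in $y := \sqrt{c}$ whose discriminant simplifies to $-16(1-\theta^2) < 0$, so the quadratic is everywhere strictly positive. The main conceptual obstacle that the argument has to overcome is that $\psi_{\theta,c}$ does \emph{not} preserve all of $\mathcal{D}_{\theta,c}$ (for large $c$, a point just above the left endpoint of $\mathcal{D}_{\theta,c}$ is mapped near $1/(2\theta)$, which can lie outside $\mathcal{D}_{\theta,c}$); invariance holds only on the smaller region $[x^\ast,\infty)$. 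The factor $8$ (rather than $4$) in the formula for $g_{\theta,c}(0)$ is precisely what guarantees $g_{\theta,c}(0) \geq x^\ast$, placing the orbit into this invariant region from the start.
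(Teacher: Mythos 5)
Your proof is correct, and it reorganizes the paper's argument in a cleaner way. Both proofs verify the same three facts — $g_{\theta,c}(0)\in\mathcal{D}_{\theta,c}$, $g_{\theta,c}(0)\geq x^\ast$ (your $x^\ast$ is the paper's $x_0^\ast$), and that $\psi_{\theta,c}$ pushes points $\geq x^\ast$ upward — and then close by induction. The difference is in how the third fact is established and in the overall structure. The paper first notes that $\psi_{\theta,c}$ is increasing and concave on $\mathcal{D}_{\theta,c}$ with $\psi'_{\theta,c}>1/\theta>1$, and then splits into two cases according to whether $\psi_{\theta,c}(y_0)\geq y_0$ at the left endpoint $y_0$ of $\mathcal{D}_{\theta,c}$: if yes, monotonicity of $\psi_{\theta,c}(x)-x$ gives $\psi_{\theta,c}>\mathrm{id}$ on all of $\mathcal{D}_{\theta,c}$ and the induction needs no further threshold; if no, the fixed point $x_0^\ast$ exists in $\mathcal{D}_{\theta,c}$ and they then check $g_{\theta,c}(0)\geq x_0^\ast$. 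You instead bypass the concavity/derivative facts and the case split entirely, establishing directly by squaring that $\psi_{\theta,c}(x)\geq x$ on $\mathcal{D}_{\theta,c}$ is equivalent to the quadratic $(1+\theta)x^2-x-c(1+\theta)\geq 0$, hence to $x\geq x^\ast$; you then always check $g_{\theta,c}(0)\geq x^\ast$ (which in the paper's Case~1 reduces to a tautology, since there $x^\ast\leq y_0$). Your comparison of the two quadratics $x^2-x/(1+\theta)=2c$ vs.\ $=c$ via monotonicity is a pleasant way to see $g_{\theta,c}(0)\geq x^\ast$, and your proof of $g_{\theta,c}(0)\in\mathcal{D}_{\theta,c}$ via a single squaring and a negative discriminant in $\sqrt{c}$ is slightly tidier than the paper's double-squaring. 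In short: same core mechanism, but your version is more elementary (pure algebra, no concavity), avoids the dichotomy, and exposes the role of the factor $8$ vs.\ $4$ in the formulas for $g_{\theta,c}(0)$ and $x^\ast$ more transparently.
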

\begin{proof} We first check that $g_{\theta, c}(0) \in \mathcal{D}_{\theta,c}$ holds. In order to do so, let us denote $\gamma=4c(1+\theta)^2$.
After some rearrangements, we only need to check that $\sqrt{1+2\gamma}> \sqrt{(1-\theta^2)\gamma}+\theta$ holds. Taking the square of both sides and rearranging,
we want to show $(1-\theta^2)+(1+\theta^2)\gamma>2\theta\sqrt{(1-\theta^2)\gamma}$. Taking the square of both sides again and rearranging, we need
\begin{equation}
  (1-\theta^2)^2 + 2(1-\theta^2)^2\gamma +(1+\theta^2)^2 \gamma^2 >0,
\end{equation}
and this inequality indeed holds, since all of the terms are non-negative for any choice of $\theta \in (0,1)$ and $c \geq 0$. We have thus checked $g_{\theta, c}(0) \in \mathcal{D}_{\theta,c}$.

 Next we observe that $x \mapsto  \psi_{\theta,c}(x)$ is an increasing and concave function of $x \in \mathcal{D}_{\theta,c}$, moreover $\frac{\mathrm{d}}{\mathrm{d}x}\psi_{\theta,c}(x) > 1/\theta >1$ holds for all $x \in \mathcal{D}_{\theta,c}$. This implies that the equation
$\psi_{\theta,c}(x)=x$ has at most one solution in $\mathcal{D}_{\theta,c}$.
 Let $y_0:=\sqrt{(1-\theta^2)c}+1/2$ denote the left endpoint of $\mathcal{D}_{\theta, c}$. One easily checks that $ \psi_{\theta,c}(y_0)\geq  y_0$ holds if and only if $c \leq \frac{1-\theta}{4 \theta^2 (1+\theta)}$ holds. We will prove Lemma \ref{lemma_rec_def_of_g} by treating the cases $ \psi_{\theta,c}(y_0) \geq y_0$ and $ \psi_{\theta,c}(y_0)< y_0$ separately.

If $ \psi_{\theta,c}(y_0) \geq y_0$ then $ \psi_{\theta,c}(x)>x$  for every $x \in \mathcal{D}_{\theta,c}$ follows from the above listed properties of $\psi_{\theta,c}$.
Now it follows from \eqref{g_recursion} by induction on $n$ that $g_{\theta, c}(n) \in \mathcal{D}_{\theta,c}$ and \eqref{g_increasing} hold for all $n \geq 0$.

If $ \psi_{\theta,c}(y_0)< y_0$ then the above listed properties of $\psi_{\theta,c}$ imply that there exists a unique $x^*_0 \in \mathcal{D}_{\theta,c}$ for which $\psi_{\theta,c}(x^*_0)=x^*_0$, moreover $x\geq x^*_0$ implies
$\psi_{\theta,c}(x)\geq x$. One checks that $x^*_0:=\frac{1+\sqrt{1+4c(1+\theta)^2}}{2(1+\theta)}$, thus  $g_{\theta, c}(0) \geq x^*_0$ holds.
It is enough to prove that $g_{\theta, c}(n) \geq x^*_0$ for all $n \geq 0$ to conclude that $g_{\theta, c}(n) \in \mathcal{D}_{\theta,c}$  for all $n \geq 0$.
  Now both $g_{\theta, c}(n) \geq x^*_0$  and \eqref{g_increasing} follow by induction on $n$  using the recursive  definition \eqref{g_recursion} of $g_{\theta, c}(n)$.
 \end{proof}

Now we are ready to prove the existence and uniqueness of $f_{\theta,c}(n), n \in \mathbb{N}$.

\begin{proof}[Proof of Lemma \ref{lemma:imp_eq_for_f_theta_c}] We will show by induction on $n$ that
\begin{equation}\label{def_eq_g}
f_{\theta, c}(n) = g_{\theta, c}(n) \theta^n , \qquad n \in \mathbb{N}
\end{equation}
is the unique solution of the system of equations \eqref{quadratic_eq_for_f_0}-\eqref{f_theta_c_n_geq_ugly}.
The induction hypothesis holds for $n=0$, since \eqref{quadratic_eq_for_f_0} is a quadratic equation for $f_{\theta,c}(0)$, which has two solutions,
one of them is equal to $g_{\theta, c}(0) \theta^0$, while the other solution is less then or equal to zero, therefore only
$g_{\theta, c}(0) \theta^0$ satisfies \eqref{f_theta_c_n_geq_ugly} for $n=0$.

 Now assume that $n \geq 1$ and \eqref{def_eq_g} holds for $n-1$, i.e.,
we have $f_{\theta, c}(n-1) = g_{\theta, c}(n-1) \theta^{n-1}$. We can view \eqref{quadratic_eq_for_f_n_and_n_minus_1} as a quadratic equation for $f_{\theta,c}(n)$ which has two solutions:
\begin{equation}\label{two_solutions-for_f_n}
 \widetilde{x}_{1,2}= \frac{\theta^{n-1} \pm \sqrt{(2f_{\theta, c}(n-1) - \theta^{n-1})^2 - 4c \cdot \theta^{2n-2} \cdot (1-\theta^2)}}{2}.
\end{equation}
Now $\widetilde{x}_1=\theta^n \psi_{\theta,c}\left(g_{\theta, c}(n-1)\right)=g_{\theta, c}(n) \theta^n $ follows from $f_{\theta, c}(n-1) = g_{\theta, c}(n-1) \theta^{n-1}$, \eqref{def_eq__psi_theta_c} and \eqref{g_recursion}, moreover $\widetilde{x}_1 > \theta^{n-1}/2$, while $\widetilde{x}_2 < \theta^{n-1}/2$, thus only $\widetilde{x}_1$ satisfies
\eqref{f_theta_c_n_geq_ugly} and therefore  \eqref{def_eq_g} holds.
\end{proof}

\begin{corollary}[Recursion for $f_{\theta,c}$]\label{corr_f_inc_dec} For any $\theta \in (0,1]$ and $c \geq 0$ we have
\begin{align}
&f_{\theta, c}(0) = \frac{1+\sqrt{1+8c(1+\theta)^2}}{2(1+\theta)}, \label{f0_with_c}\\
&f_{\theta, c}(n) = \frac{\theta^{n-1} + \sqrt{(2f_{\theta, c}(n-1) - \theta^{n-1})^2 - 4c \cdot \theta^{2n-2} \cdot (1-\theta^2)}}{2}, \quad n \ge 1. \label{fn_with_c}
\end{align}
Moreover, the function $f_{\theta, c}(n)$ decreases in $n$:
\begin{equation}
\label{f_decreasing}
f_{\theta, c}(n)  \le  f_{\theta, c}(n-1), \quad n \geq 1.
\end{equation}
\end{corollary}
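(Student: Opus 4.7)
\medskip

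\noindent\textbf{Proof proposal.} My plan is to observe that the explicit formulas \eqref{f0_with_c} and \eqref{fn_with_c} are essentially already obtained inside the proof of Lemma~\ref{lemma:imp_eq_for_f_theta_c}, so only a small amount of algebraic unpacking is needed; the monotonicity \eqref{f_decreasing} will then follow by a one-line estimate using the explicit formula.

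First, I will derive \eqref{f0_with_c} by solving \eqref{quadratic_eq_for_f_0} as a quadratic in $f_{\theta,c}(0)$: its two roots are $\frac{1\pm\sqrt{1+8c(1+\theta)^2}}{2(1+\theta)}$, and the minus-sign root is nonpositive, so the condition $f_{\theta,c}(0)>0$ from \eqref{f_theta_c_n_geq_ugly} forces the plus-sign root. Next, for \eqref{fn_with_c}, I will view \eqref{quadratic_eq_for_f_n_and_n_minus_1} as a quadratic in $f_{\theta,c}(n)$ with $f_{\theta,c}(n-1)$ given; the two candidate roots $\widetilde{x}_{1,2}$ were already displayed in \eqref{two_solutions-for_f_n}. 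Since $\widetilde{x}_2\leq\theta^{n-1}/2$ while \eqref{f_theta_c_n_geq_ugly} demands $f_{\theta,c}(n)>\theta^{n-1}/2$, we must have $f_{\theta,c}(n)=\widetilde{x}_1$, which is exactly \eqref{fn_with_c}.

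For the monotonicity, I will use \eqref{fn_with_c} together with the auxiliary inequality $2f_{\theta,c}(n-1)\geq\theta^{n-1}$. This holds for $n=1$ because \eqref{f0_with_c} gives $f_{\theta,c}(0)\geq \tfrac{1}{1+\theta}\geq \tfrac{1}{2}$ (using $\theta\leq 1$), and for $n\geq 2$ because \eqref{f_theta_c_n_geq_ugly} combined with $\theta\leq 1$ yields $f_{\theta,c}(n-1)>\theta^{n-2}/2\geq\theta^{n-1}/2$. With this in hand, the elementary bound $\sqrt{a^2-b}\leq a$ for $a,b\geq 0$ applied to \eqref{fn_with_c} gives
\[
f_{\theta,c}(n)\leq \tfrac{1}{2}\bigl(\theta^{n-1}+(2f_{\theta,c}(n-1)-\theta^{n-1})\bigr)=f_{\theta,c}(n-1).
\]
No real obstacle is anticipated: the only mildly subtle point is checking the base case $n=1$ of the auxiliary inequality $2f_{\theta,c}(n-1)\geq\theta^{n-1}$, which is not directly covered by \eqref{f_theta_c_n_geq_ugly} but follows at once from the explicit form of $f_{\theta,c}(0)$.
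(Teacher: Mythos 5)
Your proof is correct and follows essentially the same route as the paper's. The formulas \eqref{f0_with_c}--\eqref{fn_with_c} are, as you say, just unwindings of \eqref{g_recursion} and \eqref{def_eq_g} (equivalently, repeating the root-selection algebra from the proof of Lemma~\ref{lemma:imp_eq_for_f_theta_c}); and your inequality $\sqrt{a^2-b}\leq a$ is precisely the paper's comparison $\psi_{\theta,c}(x)\leq\psi_{\theta,0}(x)=x/\theta$ translated from the $g$-level to the $f$-level. The only genuine difference is that you verify the nonnegativity $2f_{\theta,c}(n-1)\geq\theta^{n-1}$ directly from \eqref{f0_with_c} and \eqref{f_theta_c_n_geq_ugly}, whereas the paper gets it for free from the domain constraint $g_{\theta,c}(n-1)\in\mathcal{D}_{\theta,c}$ (which forces $g_{\theta,c}(n-1)>1/2$); your handling of the $n=1$ base case, which \eqref{f_theta_c_n_geq_ugly} alone does not cover, is a small but necessary point you correctly flagged and resolved.
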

\begin{Proof}
The identities \eqref{f0_with_c} and \eqref{fn_with_c} follow from \eqref{g_recursion} and \eqref{def_eq_g}.

 In order to prove \eqref{f_decreasing},
we  need to show that $g_{\theta, c}(n)\leq g_{\theta, c}(n-1)/\theta  $ holds for any $n \geq 1$: this inequality follows from the fact that
$\psi_{\theta,c}(x)\leq \psi_{\theta,0}(x) = x/\theta$ holds for any $x \in \mathcal{D}_{\theta,c}$.
\end{Proof}

\begin{Proof}[of Lemma \ref{lemma:f_continuous}] The limit $ f_{\theta, c}(\infty)= \lim_{n \to \infty} f_{\theta, c}(n)$ exists since $f_{\theta, c}(n) $  decreases as $n$ increases (c.f.\ \eqref{f_decreasing}) and $f_{\theta, c}(n) \geq 0$. It follows from \eqref{f0_with_c} and \eqref{fn_with_c} by induction on $n$ that for each $n$ the function
$c \mapsto f_{\theta, c}(n)$ is continuous. Thus, in order to prove that $c \mapsto f_{\theta, c}(\infty)$ is continuous, we only need to check that the functions $c \mapsto f_{\theta, c}(n)$ converge uniformly as $n \to \infty$ on $[0,c_0]$ for any $0 \leq c_0 <+\infty$.
 In order to achieve this, we will show
\begin{equation}\label{cauchy_ingredient}
 f_{\theta, c}(n-1)-f_{\theta, c}(n) \leq \frac{1}{2} \sqrt{4c \theta^{2n-2}(1-\theta^2)}, \qquad n \geq 1, c \geq 0.
\end{equation}
By \eqref{def_eq_g} we only need to prove $g_{\theta,c}(n-1)-\theta g_{\theta,c}(n)  \leq \frac{1}{2} \sqrt{4c (1-\theta^2)} $. By
\eqref{def_eq__psi_theta_c} and \eqref{g_recursion} it is enough to show that for all $ x \in \mathcal{D}_{\theta, c}$
we have $(2x-1)-\sqrt{(2x-1)^2 -4c(1-\theta^2) } \leq \sqrt{4c(1-\theta^2)}$, but this inequality easily follows using the properties
of $\mathcal{D}_{\theta, c}$ listed below \eqref{def_eq__psi_theta_c}.

 It follows from \eqref{cauchy_ingredient} that  for any $c_0>0$ the functions $c \mapsto f_{\theta, c}(n)$ form a Cauchy
sequence with respect to the sup-norm on $[0,c_0]$. From this the desired uniform convergence readily follows.
\end{Proof}

\subsection{Signature of a scale invariant solution of the bivariate RDE}\label{sec:equivalent_system}

\paragraph{}Our next goal is to prove Lemma \ref{lemma:equivalent_system}. First we show a formula in Lemma \ref{lemma:F_recursion}, which characterizes the distribution of the right-hand side of the bivariate RDE \eqref{bivar_RDE} in terms of the bivariate signature $F_{\rho^{(2)}}$ (c.f. Definition \ref{def:rho->F}). Using this we get an equation for the univariate signature $f_{\rho^{(2)}}$ of a scale invariant measure $\rho^{(2)}$ in Lemma \ref{lemma:f_recursion}, which holds if and only if $\rho^{(2)}$ is a solution of the bivariate RDE.
In Lemma \ref{lemma_properties_of_f_theta_c} we show that $f_{\theta,c}$ (defined in Lemma \ref{lemma:imp_eq_for_f_theta_c}) satisfies a very similar equation.
Finally we conclude the proof of Lemma \ref{lemma:equivalent_system}.

Recall the notion of $\mathcal{P}^{(2)}_\theta$ from Definition \ref{biv_with_rho_marginals}.

\begin{definition}[Definition of $\tilde{\rho}^{(2)}$]\label{def:rho_tilde}
Let $\theta \in (0,1)$, $\rho^{(2)} \in \mathcal{P}^{(2)}_\theta$. Denote by $\tilde{\rho}^{(2)}$ the law of
\begin{equation}
(\chi[\tau, \kappa](Y_1,Y_2), \chi[\tau, \kappa](Y_1^*,Y_2^*)),
\end{equation}
where the function $\chi$ is defined in \eqref{chi_def} and the other notation are defined in \eqref{bivar_RDE}, so $(Y_1, Y_1^*) \sim \rho^{(2)}$, $(Y_2, Y_2^*) \sim \rho^{(2)}$, $\tau \sim \text{UNI}[0,1]$, $\kappa$ is a random variable such that $\p(\kappa = 1) = \p(\kappa = 2) = \frac 12$ and $(Y_1, Y_1^*), (Y_2, Y_2^*), \tau$ and $\kappa$ are mutually independent.
\end{definition}

\begin{lemma}[Expressing $F_{\tilde{\rho}^{(2)}}$ in terms of $F_{\rho^{(2)}}$]\label{lemma:F_recursion}
If $\theta \in (0,1), \rho^{(2)} \in \mathcal{P}^{(2)}_\theta$, then for every $j, k \in \mathbb{N}$ we have
\begin{multline} \label{F_recursion}
F_{\tilde{\rho}^{(2)}}(x_k, x_j)
= F_{\rho^{(2)}}(x_k, x_j) - \frac 12 F_{\rho^{(2)}}(x_k, x_j)^2 + \frac{x_k^2}{2(1+\theta)^2} + \frac{x_j^2}{2(1+\theta)^2} +\\
+ \frac{ 1-\theta}{2\theta} \sum \limits_{t= k \vee j + 1}^\infty \bigg[ \theta^t \Big( F_{\rho^{(2)}}(x_k, x_j) - F_{\rho^{(2)}}(x_t, x_j)
- F_{\rho^{(2)}}(x_k, x_t) + F_{\rho^{(2)}}(x_t, x_t) \Big) \bigg],
\end{multline}
where $F_{\rho^{(2)}}$, $F_{\tilde{\rho}^{(2)}}$ are the bivariate signatures of $\rho^{(2)}$ and $\tilde{\rho}^{(2)}$ respectively (c.f.\ Definition \ref{def:rho->F}).
\end{lemma}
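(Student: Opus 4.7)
The plan is to prove \eqref{F_recursion} by conditioning on $\kappa\in\{1,2\}$ in Definition \ref{def:rho_tilde}, computing each conditional probability separately, and assembling the result. Throughout I abbreviate $F:=F_{\rho^{(2)}}$.

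First, for $\kappa=2$ the map $\chi[\tau,2](y_1,y_2)=y_1\wedge y_2$ is independent of $\tau$, and the event $\{Z\leq x_k\}\cup\{Z^*\leq x_j\}$ becomes $A_1\cup A_2$ with $A_i:=\{Y_i\leq x_k\text{ or }Y_i^*\leq x_j\}$. Since $A_1$ and $A_2$ are independent and each has probability $F(x_k,x_j)$, inclusion-exclusion gives $2F-F^2$, which after the weight $1/2$ accounts for the first two terms of \eqref{F_recursion}. For $\kappa=1$ the map $\chi[\tau,1](y_1,y_2)$ depends only on $y_1$, so $\{Z\leq x_k\}=\{\tau<Y_1\leq x_k\}=:B_1$ (and similarly $B_2$), with the same $\tau$ appearing in both coordinates. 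Inclusion-exclusion reduces the task to two marginal probabilities and one joint probability. For the marginals, integrating over the uniform $\tau$ yields $\P(B_1)=\E[Y_1\mathbf{1}\{Y_1\leq x_k\}]=\sum_{n\geq k}c_n x_n=\frac{x_k^2}{(1+\theta)^2}$ (using that the marginal of $\rho^{(2)}$ is $\rho$), giving after the $1/2$ weight the $\frac{x_k^2+x_j^2}{2(1+\theta)^2}$ terms in \eqref{F_recursion}.

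The heart of the proof is the joint probability $\P(B_1\cap B_2)=\E[(Y_1\wedge Y_1^*)\mathbf{1}\{Y_1\leq x_k,\,Y_1^*\leq x_j\}]$. I would handle this via the telescoping identity $X\mathbf{1}\{X<\infty\}=\sum_{t\geq 0}(x_t-x_{t+1})\mathbf{1}\{x_t\leq X\}$, valid for $X\in\{x_n\}_{n\geq 0}\cup\{\infty\}$, applied to $X=Y_1\wedge Y_1^*$. This rewrites the expectation as $\sum_{t\geq k\vee j}(x_t-x_{t+1})\P[x_t\leq Y_1\leq x_k,\,x_t\leq Y_1^*\leq x_j]$ (terms with $t<k\vee j$ vanish, since $x_t$ would then exceed $x_k\wedge x_j$). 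Each surviving probability is evaluated by writing $\mathbf{1}\{x_t\leq Y\}=1-\mathbf{1}\{Y\leq x_{t+1}\}$ in each coordinate and expanding the product into a signed sum of four bivariate cdfs, which are converted into $F$-values via Lemma \ref{lemma:F_characterizes}. The $\frac{x_a+x_b}{1+\theta}$ pieces of the four cdfs cancel algebraically, leaving $-F(x_k,x_j)+F(x_{t+1},x_j)+F(x_k,x_{t+1})-F(x_{t+1},x_{t+1})$. Reindexing $s=t+1$ and using $x_t-x_{t+1}=\theta^s(1-\theta)/\theta$, then combining with the $-\tfrac{1}{2}$ from inclusion-exclusion, produces the stated sum with prefactor $\frac{1-\theta}{2\theta}$. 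The main obstacle is bookkeeping rather than conceptual: one must correctly delimit the active range $t\geq k\vee j$ (beyond which the four $F$-arguments line up into the uniform pattern above) and verify that the $\frac{x_a+x_b}{1+\theta}$ contributions cancel among the four cdfs before any $F$-terms remain. Once this is checked, combining the $\kappa=1$ and $\kappa=2$ contributions (each with weight $1/2$) yields \eqref{F_recursion}.
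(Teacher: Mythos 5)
Your proposal is correct and follows essentially the same route as the paper: decompose on $\kappa\in\{1,2\}$, handle $\kappa=2$ by independence and inclusion--exclusion to get $F-\tfrac12F^2$, handle $\kappa=1$ by inclusion--exclusion over $B_1,B_2$ with the marginal terms giving $\tfrac{x_k^2+x_j^2}{2(1+\theta)^2}$, and reduce the joint term to a signed telescoping sum of $F$-values. The only difference is cosmetic: to evaluate $\P(B_1\cap B_2)=\E[(Y_1\wedge Y_1^*)\mathbf 1\{Y_1\le x_k,\,Y_1^*\le x_j\}]$, the paper partitions on which dyadic interval $[x_t,x_{t-1})$ the uniform $\tau$ lands in, while you apply the discrete layer-cake identity $X\mathbf 1\{X<\infty\}=\sum_t(x_t-x_{t+1})\mathbf 1\{x_t\le X\}$ to $X=Y_1\wedge Y_1^*$; both produce the identical intermediate expression $\sum_{s\ge k\vee j+1}(x_{s-1}-x_s)\big(F(x_s,x_j)+F(x_k,x_s)-F(x_k,x_j)-F(x_s,x_s)\big)$ after the linear $\tfrac{x_a+x_b}{1+\theta}$ pieces cancel, and the reindexing you describe recovers the prefactor $\tfrac{1-\theta}{2\theta}$ in \eqref{F_recursion}.
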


\begin{Proof}
Let us use the notation
\begin{equation}
(\tilde{Y}, \tilde{Y}^*) := (\chi[\tau, \kappa](Y_1,Y_2), \chi[\tau, \kappa](Y_1^*,Y_2^*)), \text{ so that } (\tilde{Y}, \tilde{Y}^*) \sim \tilde{\rho}^{(2)}.
\end{equation}
 Let us also use the shorthand $F=F_{\rho^{(2)}}$, $\widetilde{F}=F_{\tilde{\rho}^{(2)}}$ in this proof.
Thus we have
\begin{equation}\label{kappa_1_or_2}
\widetilde{F}(x_k,x_j)= \p(\kappa = 1, \tilde{Y} \leq x_k \text{ or } \, \tilde{Y}^* \leq x_j) + \p(\kappa = 2, \tilde{Y} \leq x_k \text{ or } \, \tilde{Y}^* \leq x_j).
\end{equation}

Here
\begin{multline}\label{incl_excl}
\p(\kappa = 1, \tilde{Y} \leq x_k \text{ or } \, \tilde{Y}^* \leq x_j) = \\
\frac 12 \big[ \p(\tilde{Y} \le x_k \, |\, \kappa = 1) +  \p(\tilde{Y}^* \le x_j\, |\, \kappa = 1) - \p(\tilde{Y} \le x_k, \tilde{Y}^* \le x_j \,|\, \kappa = 1)\big].
\end{multline}

By the definition of $\chi$ in \eqref{chi_def} we will calculate all the three terms on the r.h.s.\ of \eqref{incl_excl}.
\begin{align}
&\p(\tilde{Y} \le x_k \, |\, \kappa = 1) = \p(Y_1 \le x_k, Y_1 > \tau) = \sum \limits_{l=k}^\infty \p(Y_1 = x_l, x_l > \tau)  \nonumber\\
&= \sum \limits_{l=k}^\infty x_l \cdot c_l = \sum \limits_{l=k}^\infty  \frac{1-\theta}{1+\theta} \cdot \theta^{2l} = x_k^2 \cdot \frac{1-\theta}{1+\theta} \cdot \sum \limits_{l=0}^\infty \theta^{2l} = \frac{x_k^2}{(1+\theta)^2}.
\end{align}

Similarly, we have
\begin{align}
&\p(\tilde{Y}^* \le x_j \, | \, \kappa = 1) = \frac{x_j^2}{(1+\theta)^2}.
\end{align}

\begin{multline}
\p(\tilde{Y} \le x_k, \tilde{Y}^* \le x_j \, |\, \kappa = 1) = \p(Y_1 \le x_k, Y_1^* \le x_j, \tau < Y_1 \wedge Y_1^*) = \\
 \sum \limits_{t = k \vee j + 1}^\infty \p(x_t < Y_1 \le x_k, x_t < Y_1^* \le x_j, \tau \in [x_t, x_{t-1})) = \\
 \sum \limits_{t = k \vee j + 1}^\infty ((F(x_t, x_j) + F(x_k, x_t) - F(x_k, x_j) - F(x_t, x_t) )  (x_{t-1} - x_t) = \\
  \frac{\theta-1}{\theta}  \sum \limits_{t= k \vee j + 1}^\infty \bigg[ \theta^t \Big( F(x_k, x_j) - F(x_t, x_j)
- F(x_k, x_t) + F(x_t, x_t) \Big) \bigg].
\end{multline}

Now we calculate the other term of \eqref{kappa_1_or_2}:
\begin{align}
\p(\kappa &= 2, \tilde{Y} \leq x_k \text{ or } \, \tilde{Y}^* \leq x_j) = \frac 12 \cdot \big( 1 - \p(Y_1 \wedge Y_2 > x_k, Y_1^* \wedge Y_2^* > x_j)\big)  \nonumber \\
& \stackrel{(*)}{=} \frac 12 \cdot \big( 1 - (1-F(x_k, x_j))^2 \big)= F(x_k, x_j) - \frac 12 \cdot F(x_k, x_j)^2, \label{kappa_2_case}
\end{align}
where $(*)$ holds by the independence of $(Y_1, Y^*_1 )$ and $(Y_2, Y^*_2 )$.
Now \eqref{F_recursion} follows if we substitute \eqref{incl_excl}-\eqref{kappa_2_case} into \eqref{kappa_1_or_2}.
\end{Proof}

Recall the notation of $\Mi^{(2)}_\tet$ from below (\ref{rhosca}).

\begin{lemma}[Equation for the signature]\label{lemma:f_recursion}
 $\rho^{(2)} \in \mathcal{M}^{(2)}_{\theta}$ is a solution of the bivariate RDE \eqref{bivar_RDE} if and only if its signature $f_{\rho^{(2)}}$ satisfies
\begin{multline}
f_{\rho^{(2)}}(n)^2 =
 \frac{1}{(1+\theta)^2} +
  \theta^n \cdot f_{\rho^{(2)}}(n) - (1-\theta) \cdot \sum \limits_{t=n}^\infty \theta^{t} f_{\rho^{(2)}}(t+1) +
  \\
+\left( \frac{1}{(1+\theta)^2} + \frac{\theta \cdot f_{\rho^{(2)}}(0)}{1+\theta} - (1-\theta) \cdot \sum \limits_{t=0}^\infty \theta^t f_{\rho^{(2)}}(t+1) \right) \cdot \theta^{2n}. \label{f_recursion}
\end{multline}
\end{lemma}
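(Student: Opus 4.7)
\begin{Proof}[Proposal for Lemma \ref{lemma:f_recursion}]
The plan is to reduce the claim to a direct computation using the bivariate signature. By Definition \ref{def:rho_tilde} and the remark following it, $\rho^{(2)} \in \mathcal{M}^{(2)}_\theta$ solves the bivariate RDE \eqref{bivar_RDE} if and only if $\rho^{(2)} \equiv \tilde\rho^{(2)}$. Since the marginals of $\tilde\rho^{(2)}$ are $\rho$ (as $\rho$ solves the univariate RDE by Lemma \ref{L:burnlaw}) and $\tilde\rho^{(2)}$ is manifestly symmetric, we have $\tilde\rho^{(2)}\in\mathcal{P}^{(2)}_\theta$. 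Hence by Lemma \ref{lemma:F_characterizes}, $\rho^{(2)}\equiv\tilde\rho^{(2)}$ is equivalent to $F_{\rho^{(2)}}\equiv F_{\tilde\rho^{(2)}}$. My strategy is to plug the scale-invariant factorisation $F_{\rho^{(2)}}(x_k,x_j)=x_{k\wedge j}f_{\rho^{(2)}}(|k-j|)$ (available by Lemmas \ref{lemma:scale_inv} and \ref{lemma:F_and_f}) into the explicit formula \eqref{F_recursion} for $F_{\tilde\rho^{(2)}}$ and read off the resulting constraint on $f_{\rho^{(2)}}$.

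Concretely, I would fix $k\geq j$, set $n:=k-j$, and use $F_{\rho^{(2)}}(x_k,x_j)=\theta^j f(n)$, $F_{\rho^{(2)}}(x_t,x_j)=\theta^j f(t-j)$, $F_{\rho^{(2)}}(x_k,x_t)=\theta^k f(t-k)$ and $F_{\rho^{(2)}}(x_t,x_t)=\theta^t f(0)$ for $t\geq k+1$ (where I write $f=f_{\rho^{(2)}}$). After substituting into \eqref{F_recursion} and changing the summation variable to $s=t-k$, the infinite sum breaks into three pieces proportional to $\theta^{k+j}$, $\theta^{2k}$ and $\theta^{2k}$, whose inner sums are the standard geometric series $\sum_{s\geq 1}\theta^s=\theta/(1-\theta)$ and $\sum_{s\geq 1}\theta^{2s}=\theta^2/(1-\theta^2)$. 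The crucial observation is that after this simplification the equation $F_{\tilde\rho^{(2)}}(x_k,x_j)=F_{\rho^{(2)}}(x_k,x_j)=\theta^j f(n)$ can be multiplied by $2\theta^{-2j}$, and all dependence on $k$ and $j$ collapses into a dependence on $n=k-j$ only (the factors $\theta^{2k-2j}$ becoming $\theta^{2n}$ and $x_k/x_j$ becoming $\theta^n$). Collecting terms that are independent of $n$, terms proportional to $\theta^n$, and terms proportional to $\theta^{2n}$, the resulting identity is exactly \eqref{f_recursion}.

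For the converse direction, I would run the same calculation in reverse: if $f=f_{\rho^{(2)}}$ satisfies \eqref{f_recursion}, then substituting $F_{\rho^{(2)}}(x_k,x_j)=x_{k\wedge j}f(|k-j|)$ into the right-hand side of \eqref{F_recursion} yields, for $k\geq j$ with $n=k-j$, the value $\theta^j f(n)=F_{\rho^{(2)}}(x_k,x_j)$. By the symmetry $F_{\tilde\rho^{(2)}}(x_k,x_j)=F_{\tilde\rho^{(2)}}(x_j,x_k)$ (which is built into \eqref{F_recursion} since its right-hand side is symmetric in $(x_k,x_j)$), the identity extends to all $k,j\in\mathbb{N}$, so $F_{\tilde\rho^{(2)}}\equiv F_{\rho^{(2)}}$ and Lemma \ref{lemma:F_characterizes} gives $\tilde\rho^{(2)}=\rho^{(2)}$.

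No step presents a conceptual obstacle; the only risk is bookkeeping errors in the geometric-series manipulation and in the index shift $t\mapsto t+1$ needed to convert $\sum_{t=n+1}^{\infty}\theta^t f(t)$ into $(1-\theta)\sum_{t=n}^{\infty}\theta^t f(t+1)$ (after absorbing the prefactor $(1-\theta)/\theta$). The matching between the $\theta^{2n}$-coefficient produced by the computation, namely $\frac{1}{(1+\theta)^2}+\frac{\theta f(0)}{1+\theta}-(1-\theta)\sum_{t=0}^\infty \theta^t f(t+1)$, and the corresponding bracket in \eqref{f_recursion}, is the cleanest check that the reduction has been carried out correctly.
\end{Proof}
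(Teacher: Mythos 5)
Your proposal is correct and follows essentially the same route as the paper: reduce to $F_{\tilde\rho^{(2)}}=F_{\rho^{(2)}}$ via Lemma \ref{lemma:F_characterizes}, substitute the scale-invariant form $F_{\rho^{(2)}}(x_k,x_j)=x_{k\wedge j}f_{\rho^{(2)}}(|k-j|)$ into \eqref{F_recursion}, sum the geometric series, and normalise by $x_j^2$. Your explicit remark that $\tilde\rho^{(2)}\in\mathcal{P}^{(2)}_\theta$ (so that Lemma \ref{lemma:F_characterizes} applies) is a small point the paper leaves implicit, and the rest of the bookkeeping matches the paper's computation term by term.
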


\begin{Proof} By
 Lemma \ref{lemma:F_characterizes} the measure  $\rho^{(2)}$ is a solution of the bivariate RDE \eqref{bivar_RDE} if and only if $F_{\tilde{\rho}^{(2)}}=F_{\rho^{(2)}}$.
 We will prove that if $\rho^{(2)} \in \mathcal{M}^{(2)}_{\theta}$ then \eqref{f_recursion} holds if and only if $F_{\tilde{\rho}^{(2)}}=F_{\rho^{(2)}}$.

We have $F_{\tilde{\rho}^{(2)}}=F_{\rho^{(2)}}$ in \eqref{F_recursion} if and only if
\begin{multline}\label{F_fix}
F_{\rho^{(2)}}(x_k, x_j)^2 = \frac{x_k^2}{(1+\theta)^2} + \frac{x_j^2}{(1+\theta)^2} + \\ \frac{ 1-\theta}{\theta} \sum \limits_{t= k \vee j + 1}^\infty \bigg[ \theta^t \Big( F_{\rho^{(2)}}(x_k, x_j)
- F_{\rho^{(2)}}(x_t, x_j) - F_{\rho^{(2)}}(x_k, x_t) + F_{\rho^{(2)}}(x_t, x_t) \Big) \bigg]
\end{multline}
holds for every $j, k \in \mathbb{N}$.

By symmetry of $F_{\rho^{(2)}}$ we can assume that $0 \le j \le k$ (so $x_k \le x_j$) and let $n := k-j$. We have  $F_{\rho^{(2)}}(x_l, x_i) = x_{l \wedge i} f(|l-i|), l, i \in \mathbb{N}$ (c.f.\ \eqref{F_and_f}). Hence
\begin{align}
F_{\rho^{(2)}}(x_k, x_j) &= x_j \cdot f_{\rho^{(2)}}(n),
\\
\sum \limits_{t= k \vee j + 1}^\infty \theta^t F_{\rho^{(2)}}(x_k, x_j) &= \sum \limits_{t= k+1}^\infty \theta^t \cdot x_j \cdot f_{\rho^{(2)}}(n) =
 \frac{x_{k+j+1} \cdot f_{\rho^{(2)}}(n)}{1-\theta},\\
\notag \sum \limits_{t= k \vee j + 1}^\infty \theta^t F_{\rho^{(2)}}(x_t, x_j) &= \sum \limits_{t= k+1}^\infty \theta^t \cdot x_j \cdot f_{\rho^{(2)}}(t-j) =\\
&= x_{k+j+1} \cdot \sum \limits_{t=0}^\infty \theta^t f_{\rho^{(2)}}(t+n+1)
= x_{k+j+1} \cdot \sum \limits_{t=n}^\infty \theta^{t-n} f_{\rho^{(2)}}(t+1),\\
\sum \limits_{t= k \vee j + 1}^\infty \theta^t F_{\rho^{(2)}}(x_k, x_t) &= \sum \limits_{t= k+1}^\infty \theta^t  x_k  f_{\rho^{(2)}}(t-k) = x_{2k+1}  \sum \limits_{t=0}^\infty \theta^t f_{\rho^{(2)}}(t+1),\\
\sum \limits_{t= k \vee j + 1}^\infty \theta^t F_{\rho^{(2)}}(x_t, x_t) &=
 \sum \limits_{t= k+1}^\infty \theta^t \cdot x_t \cdot f_{\rho^{(2)}}(0) = \frac{x_{2(k+1)} \cdot f_{\rho^{(2)}}(0)}{1-\theta^2}.
\end{align}

If we substitute all of the above into \eqref{F_fix} and divide by $x_j^2 =  \theta^{2j}$, we get:
\begin{multline}
f_{\rho^{(2)}}(n)^2 = \frac{\theta^{2n}}{(1+\theta)^2} + \frac{1}{(1+\theta)^2} + \theta^n f_{\rho^{(2)}}(n) -  (1-\theta)
 \theta^n
 \cdot \sum \limits_{t=n}^\infty \theta^{t-n} f_{\rho^{(2)}}(t+1)
- \\
(1-\theta) \cdot \theta^{2n} \cdot \sum \limits_{t=0}^\infty \theta^t f_{\rho^{(2)}}(t+1) + \frac{\theta^{2n+1} f_{\rho^{(2)}}(0)}{1+\theta}.
\end{multline}
We get \eqref{f_recursion} after rearranging above formula, so $F_{\tilde{\rho}^{(2)}}=F_{\rho^{(2)}}$ in \eqref{F_recursion} if and only if $f_{\rho^{(2)}}$ satisfies \eqref{f_recursion}, therefore we proved the lemma.
\end{Proof}

Next we consider the sequence $f_{\theta,c}(n), n\in \mathbb{N}$ (c.f.\ Lemma \ref{lemma:imp_eq_for_f_theta_c}) and derive some formulas analogous to \eqref{f_recursion}.

\begin{lemma}[Properties of $f_{\theta,c}$]\label{lemma_properties_of_f_theta_c}
Given some $\theta \in (0,1)$ and $c \geq 0$, let us assume that $\lim \limits_{n \to \infty} f_{\theta,c}(n) = \frac{1}{1+\theta}$ holds. Under these conditions we  have
\begin{equation}\label{f_theta_c_sum_eq}
f_{\theta,c}(n)^2 = \frac{1}{(1+\theta)^2} + \theta^n \cdot f_{\theta,c}(n) - (1-\theta) \cdot \sum \limits_{t=n}^\infty \theta^{t} f_{\theta,c}(t+1) + c \cdot \theta^{2n}, \quad n \in \mathbb{N},
\end{equation}
\begin{equation}\label{c_f_theta_c_formula}
c = \frac{1}{(1+\theta)^2} + \frac{\theta \cdot f_{\theta,c}(0)}{1+\theta} - (1-\theta) \cdot \sum \limits_{t=0}^\infty \theta^t f_{\theta,c}(t+1),
\end{equation}
\begin{equation}\label{f_theta_c_0_in_interval}
  \frac{1}{1+\theta} \wedge \frac{2 \theta}{1+\theta} \leq f_{\theta,c}(0) \leq \frac{1}{1+\theta} \vee \frac{2 \theta}{1+\theta},
\end{equation}
\begin{equation}\label{c_leq_bbbound}
c \leq   0 \vee \frac{\theta \cdot (2\theta - 1)}{(1+\theta)^2}.
\end{equation}
\end{lemma}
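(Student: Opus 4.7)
The plan is to derive the four claims in order, using only the recursion \eqref{quadratic_eq_for_f_n_and_n_minus_1} together with the hypothesis $\lim_n f_{\theta,c}(n)=1/(1+\theta)$.

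First I would obtain \eqref{f_theta_c_sum_eq} by telescoping. Writing $a_n:=f_{\theta,c}(n)^2$, the recursion \eqref{quadratic_eq_for_f_n_and_n_minus_1} reads
\begin{equation*}
a_{k-1}-a_k=\theta^{k-1}\big(f_{\theta,c}(k-1)-f_{\theta,c}(k)\big)+c\,\theta^{2k-2}(1-\theta^2).
\end{equation*}
Summing from $k=n+1$ to $\infty$ and using $a_\infty=1/(1+\theta)^2$ (from the hypothesis) converts the geometric tail $c(1-\theta^2)\sum_{k\geq n+1}\theta^{2k-2}$ into $c\theta^{2n}$. For the first piece I would apply Abel summation (index shift $t=k-1$ followed by splitting $f(t)-f(t+1)$ into two sums and reindexing) to rewrite
\begin{equation*}
\sum_{t=n}^\infty\theta^t\big(f_{\theta,c}(t)-f_{\theta,c}(t+1)\big)=\theta^n f_{\theta,c}(n)-(1-\theta)\sum_{t=n}^\infty\theta^t f_{\theta,c}(t+1),
\end{equation*}
after verifying absolute convergence from $f_{\theta,c}(t+1)\leq f_{\theta,c}(0)$ and $\sum\theta^t<\infty$. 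Combining yields \eqref{f_theta_c_sum_eq}.

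Second, to derive \eqref{c_f_theta_c_formula} I would evaluate \eqref{f_theta_c_sum_eq} at $n=0$ and substitute $f_{\theta,c}(0)^2=\tfrac{f_{\theta,c}(0)}{1+\theta}+2c$ from \eqref{quadratic_eq_for_f_0}; the $2c$ on the left cancels with $c$ on the right to give the stated expression for $c$.

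Third, I would establish \eqref{f_theta_c_0_in_interval} and \eqref{c_leq_bbbound} together. The lower bound $f_{\theta,c}(0)\geq 1/(1+\theta)$ is immediate from monotonicity \eqref{f_decreasing} and the limit hypothesis, which already settles the lower half of \eqref{f_theta_c_0_in_interval}. For the upper part, using $f_{\theta,c}(t+1)\geq 1/(1+\theta)$ in \eqref{c_f_theta_c_formula} gives
\begin{equation*}
c\leq \frac{1}{(1+\theta)^2}+\frac{\theta f_{\theta,c}(0)}{1+\theta}-\frac{1}{1+\theta}=\frac{\theta}{1+\theta}\Big(f_{\theta,c}(0)-\frac{1}{1+\theta}\Big).
\end{equation*}
Combining with $2c=f_{\theta,c}(0)\big(f_{\theta,c}(0)-\tfrac{1}{1+\theta}\big)$ from \eqref{quadratic_eq_for_f_0}, and distinguishing the cases $f_{\theta,c}(0)=1/(1+\theta)$ (trivial) and $f_{\theta,c}(0)>1/(1+\theta)$ (divide by the common positive factor), yields $f_{\theta,c}(0)\leq 2\theta/(1+\theta)$ in the latter case. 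Thus $f_{\theta,c}(0)\in\{1/(1+\theta)\}\cup(1/(1+\theta),2\theta/(1+\theta)]$, which in either sign regime of $2\theta-1$ is contained in the interval of \eqref{f_theta_c_0_in_interval}. Plugging this bound on $f_{\theta,c}(0)$ back into the displayed upper bound on $c$ gives \eqref{c_leq_bbbound}: when $\theta\leq 1/2$ forces $f_{\theta,c}(0)=1/(1+\theta)$ and hence $c=0$, while $\theta>1/2$ yields $c\leq \theta(2\theta-1)/(1+\theta)^2$.

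The only delicate step is the telescoping plus Abel rearrangement of step one, which demands checking convergence of the tail sums; everything else is algebraic manipulation of already established identities.
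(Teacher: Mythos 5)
Your proof is correct and follows essentially the same route as the paper: the first identity is obtained by telescoping the recursion \eqref{quadratic_eq_for_f_n_and_n_minus_1} from $n+1$ to $\infty$ and invoking the limit hypothesis (the paper phrases this equivalently by defining a difference $\beta_n$ and showing $\beta_n$ is constant with $\lim_n\beta_n=0$), and \eqref{c_f_theta_c_formula}, \eqref{f_theta_c_0_in_interval}, \eqref{c_leq_bbbound} follow from the same substitutions and the estimate $f_{\theta,c}(t)\geq\tfrac{1}{1+\theta}$. The only cosmetic difference is in your treatment of \eqref{f_theta_c_0_in_interval}, where you divide out the common factor $f_{\theta,c}(0)-\tfrac{1}{1+\theta}$ after a case split, while the paper identifies $\tfrac{1}{1+\theta}$ and $\tfrac{2\theta}{1+\theta}$ as the roots of a quadratic; the conclusions are identical.
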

\begin{Proof}
To prove \eqref{f_theta_c_sum_eq} let us denote by $\beta_n$ the difference of the r.h.s.\ and the l.h.s.\ of \eqref{f_theta_c_sum_eq}. Our goal is to show $\beta_n \equiv 0$.
For every $n \ge 1$ we  have
\begin{align}
\beta_n - \beta_{n-1} =& f_{\theta,c}(n-1)^2 - f_{\theta,c}(n)^2 - \theta^{n-1} f_{\theta,c}(n-1) + \theta^n f_{\theta,c}(n) +\nonumber\\
&+ (1-\theta) \theta^{n-1} f_{\theta,c}(n) - c \cdot \theta^{2n-2}(1 - \theta^2).\label{equivalent_system_other_direction}
\end{align}
The right-hand side of \eqref{equivalent_system_other_direction} is $0$ by \eqref{quadratic_eq_for_f_n_and_n_minus_1}.
Therefore the sequence $\beta_n$ is constant, but we also have $\lim \limits_{n \to \infty} \beta_n=0$ by the definition of $\beta_n$  and our assumption $\lim \limits_{n \to \infty} f_{\theta,c}(n) = \frac{1}{1+\theta}$. So $\beta_n \equiv 0$, thus we get \eqref{f_theta_c_sum_eq}.

Next we show \eqref{c_f_theta_c_formula}.
If we take \eqref{f_theta_c_sum_eq} at $n=0$, we obtain
\begin{equation}\label{c_equation_1}
f_{\theta,c}(0)^2 = \frac{1}{(1+\theta)^2} + f_{\theta,c}(0) - (1-\theta) \cdot \sum \limits_{t=0}^\infty \theta^{t} f_{\theta,c}(t+1) + c.
\end{equation}
If we take the difference of \eqref{quadratic_eq_for_f_0} and \eqref{c_equation_1} and rearrange, we get \eqref{c_f_theta_c_formula}.

Next we prove \eqref{f_theta_c_0_in_interval}. From our assumption $\lim \limits_{n \to \infty} f_{\theta,c}(n) = \frac{1}{1+\theta}$ and \eqref{f_decreasing}
we obtain that $f_{\theta,c}(n) \ge \frac{1}{1+\theta}$ for every $n \in \mathbb{N}$, hence
\begin{equation}
c  \stackrel{\eqref{c_f_theta_c_formula} }{\le} \frac{1}{(1+\theta)^2} + \frac{\theta  f_{\theta,c}(0)}{1+\theta} - (1-\theta) \sum \limits_{t=0}^\infty   \frac{\theta^t}{1+\theta} = \frac{\theta  ((1+\theta)  f_{\theta,c}(0) - 1)}{(1+\theta)^2}. \label{c_inequality}
\end{equation}
Putting together \eqref{quadratic_eq_for_f_0} and \eqref{c_inequality}, we obtain the inequality
\begin{equation}
 f_{\theta,c}(0)^2 - \frac{1}{1+\theta}f_{\theta,c}(0) \leq  2 \frac{\theta  ((1+\theta)  f_{\theta,c}(0) - 1)}{(1+\theta)^2} ,
\end{equation} which implies
\eqref{f_theta_c_0_in_interval}, since the roots of the polynomial $x^2 - \frac{x}{1+\theta}- 2 \frac{\theta  ((1+\theta)  x - 1)}{(1+\theta)^2} $ are $\frac{1}{1+\theta}$ and $\frac{2 \theta}{1+\theta}$.
Finally, \eqref{c_leq_bbbound} follows by plugging the upper bound of \eqref{f_theta_c_0_in_interval} into \eqref{c_inequality}.
\end{Proof}

\begin{Proof}[of Lemma \ref{lemma:equivalent_system}]
 First assume that $\rho^{(2)} \in \mathcal{M}^{(2)}_{\theta}$ is a solution of the bivariate RDE \eqref{bivar_RDE} and let us define
\begin{equation}\label{c_def}
c := \frac{1}{(1+\theta)^2} + \frac{\theta \cdot f_{\rho^{(2)}}(0)}{1+\theta} - (1-\theta) \cdot \sum \limits_{t=0}^\infty \theta^t f_{\rho^{(2)}}(t+1).
\end{equation}
We will prove that $f_{\rho^{(2)}}(n) = f_{\theta, c}(n)$ holds for this $c$ for every $n \in \mathbb{N}$. By Lemma \ref{lemma:imp_eq_for_f_theta_c}, it is enough
to show that $f_{\rho^{(2)}}(n)$ satisfies \eqref{quadratic_eq_for_f_0}-\eqref{f_theta_c_n_geq_ugly}.

By Lemma \ref{lemma:f_recursion} the equation \eqref{f_recursion} holds. If we plug the definition \eqref{c_def} of $c$ into equation \eqref{f_recursion}, we get
\begin{equation}\label{f_recursion_with_c}
f_{\rho^{(2)}}(n)^2 = \frac{1}{(1+\theta)^2} + \theta^n \cdot f_{\rho^{(2)}}(n) - (1-\theta) \cdot \sum \limits_{t=n}^\infty \theta^{t} f_{\rho^{(2)}}(t+1) + c \cdot \theta^{2n}.
\end{equation}

If we take \eqref{f_recursion_with_c} at $n = 0$,  subtract $\frac{1}{1+\theta} \cdot f_{\rho^{(2)}}(0)$ from both sides and again use the definition \eqref{c_def} of $c$, we get
$f_{\rho^{(2)}}(0)^2 - \frac{1}{1+\theta}f_{\rho^{(2)}}(0) = 2c$, i.e., that \eqref{quadratic_eq_for_f_0} holds. Now let $n \geq 1$.
 If we take the difference of \eqref{f_recursion_with_c} at $n-1$ and at $n$, we obtain
\begin{align}
&f_{\rho^{(2)}}(n-1)^2 - f_{\rho^{(2)}}(n)^2 =\\
&= \theta^{n-1} f_{\rho^{(2)}}(n-1) - \theta^n f_{\rho^{(2)}}(n) - (1-\theta) \theta^{n-1} f_{\rho^{(2)}}(n) + c \cdot \theta^{2n-2}(1 - \theta^2),\nonumber
\end{align}
therefore \eqref{quadratic_eq_for_f_n_and_n_minus_1} holds. Both inequalities required by \eqref{f_theta_c_n_geq_ugly} can be proved using $f_{\rho^{(2)}}(n) \geq \frac{1}{1+\theta}$
 (which holds by Lemma \ref{lemma:f_conditions}), also using  $\frac{1}{1+\theta} >\frac{1}{2}\geq  \frac{\theta^{n-1}}{2}$  in the proof of the $n \geq 1$ case of \eqref{f_theta_c_n_geq_ugly}.

We also need that $c \ge 0$: this follows from $f_{\rho^{(2)}}(0)^2 - \frac{1}{1+\theta}f_{\rho^{(2)}}(0) = 2c$ and $f_{\rho^{(2)}}(n) \geq \frac{1}{1+\theta}$.

In the other direction, we assume that for some $\rho^{(2)} \in \mathcal{M}^{(2)}_{\theta}$ we have $f_{\rho^{(2)}}(n) = f_{\theta, c}(n)$ for every $n \in \mathbb{N}$ for some $c \ge 0$, and we have to show that $\rho^{(2)}$ is a solution of the bivariate RDE. By Lemma \ref{lemma:f_recursion} it is enough to show that \eqref{f_recursion} holds for every $n \in \mathbb{N}$. In order to do so, we use Lemma \ref{lemma_properties_of_f_theta_c} (the conditions of which do hold, since $\lim \limits_{n \to \infty} f_{\rho^{(2)}}(n) = \frac{1}{1+\theta}$ by Lemma \ref{lemma:f_conditions}): the identity  \eqref{f_recursion} follows by putting  \eqref{f_theta_c_sum_eq} and \eqref{c_f_theta_c_formula} together. This completes the proof of statement (i) of Lemma \ref{lemma:equivalent_system}. Also,  \eqref{c_upper_bound} follows from \eqref{c_leq_bbbound}, i.e.,  statement (ii) of Lemma \ref{lemma:equivalent_system} also holds.
The proof of Lemma \ref{lemma:equivalent_system} is complete.
\end{Proof}

\subsection{\texorpdfstring{Definition of $\theta^*$}{Definition of theta*}}\label{sec:lim_f_c=0}

\paragraph{}In this section our goal is to prove Lemmas \ref{L:tetdef} and \ref{lemma:lim_f_c=0}.  We will give an explicit formula for $\tilde{f}_\theta(\infty) = \lim \limits_{n \to \infty} \left( \frac{\partial}{\partial c} f_{\theta, c}(n) \right) \big|_{c=0_+}$ in Lemma \ref{lemma:f_tilde} and  prove that it is strictly decreasing in $\theta$. As we will see, this fact implies Lemmas  \ref{L:tetdef} and  \ref{lemma:lim_f_c=0}. This is a key point:  we will see in Sections \ref{sec:theta<=theta*} and \ref{sec:theta>theta*} that the sign of $\tilde{f}_\theta(\infty)$  determines whether or not we have a non-diagonal scale invariant solution of the RDE.

\medskip

Recall from Corollary \ref{corr_f_inc_dec} that $f_{\theta,c}$ satisfies \eqref{f0_with_c} and \eqref{fn_with_c}. If we differentiate these equations with respect to $c$, we obtain
\begin{align}
&\frac{\partial}{\partial c} f_{\theta, c}(0) = \frac{2(1+\theta)}{\sqrt{1+8c (1+\theta)^2}},\label{f0_diff_c} \\
&\frac{\partial}{\partial c} f_{\theta, c}(n) = \frac{\frac{\partial}{\partial c} f_{\theta, c}(n-1) \cdot (2f_{\theta, c}(n-1) - \theta^{n-1}) - \theta^{2n-2} \cdot (1-\theta^2)}{\sqrt{\left(2f_{\theta, c}(n-1) - \theta^{n-1} \right)^2 - 4c \cdot \theta^{2n-2} \cdot (1-\theta^2)}}, \quad n \geq 1. \label{fn_diff_c}
\end{align}

For any $\theta \in (0,1)$, let us define
\begin{equation}\label{gamma_n_theta}
\gamma_n(\theta):=\frac{\theta^{2n-2} \cdot (1-\theta^2)}{\frac{2}{1+\theta} - \theta^{n-1}}=\frac{(1+\theta)^2 \theta^{n-1}}{ 1 +2 \sum_{k=1}^{n-1} \theta^{-k} }, \quad
n \geq 1.
\end{equation}
Note that the equality of the two formulas in \eqref{gamma_n_theta} holds for all $\theta \in (0,1)$, but the second formula for $\gamma_n(\theta)$  extends continuously to $\theta=1$ as well.

Recall the notations $\tilde{f}_\theta(n)$ and $\tilde{f}_\theta(\infty)$ of Definition \ref{def:f_tilde}.

\begin{lemma}[Formulas for  $\tilde{f}_\theta$]\label{lemma:f_tilde}
We have
\begin{align}
\tilde{f}_\theta(0) &= 2(1+\theta), \label{f_tilde_0} \\
\tilde{f}_\theta(n) &= \tilde{f}_{\theta}(n-1) - \gamma_n(\theta)=2(1+\theta)-\sum_{k=1}^n  \gamma_k(\theta), \quad n \geq  1, \label{f_tilde_n}\\
\tilde{f}_\theta(\infty) &= 2(1+\theta) -\sum_{k=1}^\infty  \gamma_k(\theta) \stackrel{ \eqref{gamma_n_theta} }{=} 1-\theta^2 -\sum_{k=2}^\infty  \gamma_k(\theta) . \label{lim_f_tilde}
\end{align}
\end{lemma}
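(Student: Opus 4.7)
The plan is to differentiate the explicit recursion \eqref{f0_with_c}--\eqref{fn_with_c} for $f_{\theta,c}(n)$ with respect to $c$ at $c=0_+$, using the trivial base solution $f_{\theta,0}(n)\equiv\tfrac{1}{1+\theta}$ from \eqref{f_theta_null}. To justify termwise differentiation, I would first check by induction on $n$ that $c\mapsto f_{\theta,c}(n)$ is smooth on a right neighborhood of $0$: this amounts to the radicand $(2f_{\theta,c}(n-1)-\theta^{n-1})^2-4c\theta^{2n-2}(1-\theta^2)$ in \eqref{fn_with_c} being strictly positive at $c=0$, i.e., to $\tfrac{2}{1+\theta}-\theta^{n-1}>0$, which holds for all $n\geq 1$ and $\theta\in(0,1)$ because $(1+\theta)\theta^{n-1}\leq 1+\theta<2$.

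Setting $c=0$ in \eqref{f0_diff_c} immediately gives $\tilde f_\theta(0)=2(1+\theta)$. Substituting $f_{\theta,0}(n-1)=\tfrac{1}{1+\theta}$ into \eqref{fn_diff_c} and simplifying, one obtains
\begin{equation}
\tilde f_\theta(n)=\frac{\tilde f_\theta(n-1)\bigl(\tfrac{2}{1+\theta}-\theta^{n-1}\bigr)-\theta^{2n-2}(1-\theta^2)}{\tfrac{2}{1+\theta}-\theta^{n-1}}=\tilde f_\theta(n-1)-\gamma_n(\theta),
\end{equation}
where the last equality uses the first expression for $\gamma_n(\theta)$ in \eqref{gamma_n_theta}. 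A short side computation with the geometric sum $\sum_{k=1}^{n-1}\theta^{-k}=(\theta^{-(n-1)}-1)/(1-\theta)$ shows that the two expressions for $\gamma_n(\theta)$ in \eqref{gamma_n_theta} coincide, which incidentally makes the second expression continuous at $\theta=1$. Telescoping the recursion from $n=0$ yields the closed form $\tilde f_\theta(n)=2(1+\theta)-\sum_{k=1}^n\gamma_k(\theta)$.

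For the limit, I would note that for $n\geq 2$ the denominator $\tfrac{2}{1+\theta}-\theta^{n-1}$ is bounded below by $\tfrac{2}{1+\theta}-\theta>0$, hence $\gamma_n(\theta)\leq C(\theta)\,\theta^{2(n-1)}$ for a finite constant $C(\theta)$; in particular $\sum_{k=1}^\infty\gamma_k(\theta)$ converges absolutely. Consequently $\tilde f_\theta(n)$ converges monotonically and $\tilde f_\theta(\infty)=2(1+\theta)-\sum_{k=1}^\infty\gamma_k(\theta)$, as claimed. The entire argument is a routine calculus computation; there is no real obstacle, since the genuine content of the section—the monotonicity in $\theta$ and the resulting definition of $\theta^\ast$—is carried by Lemma~\ref{L:tetdef} and the subsequent sign analysis of $\tilde f_\theta(\infty)$, not by this bookkeeping lemma.
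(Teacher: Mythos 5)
Your proof is correct and follows the same route as the paper's: substitute $c=0$ into the differentiated recursion \eqref{f0_diff_c}--\eqref{fn_diff_c} using $f_{\theta,0}(n)\equiv\frac{1}{1+\theta}$, telescope, and take the limit. The extra details you supply (positivity of the radicand at $c=0$ justifying differentiability, and the geometric bound $\gamma_n(\theta)\leq C(\theta)\theta^{2(n-1)}$ ensuring the series converges) are correct and make explicit what the paper leaves implicit.
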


\begin{Proof}
Substituting $c=0$ into \eqref{f0_diff_c}, we get \eqref{f_tilde_0}.
Similarly, if we substitute $c=0$ into \eqref{fn_diff_c} using that $f_{\theta, 0}(n) = \frac{1}{1+\theta}$ for every $n \in \mathbb{N}$ (see \eqref{f_theta_null}), we get \eqref{f_tilde_n}.
From \eqref{f_tilde_n} we get \eqref{lim_f_tilde} by the definition of $\tilde{f}_\theta(\infty)$ (c.f.\ \eqref{eq_tilde_f_theta_n}).
\end{Proof}

\begin{Proof}[of Lemmas \ref{L:tetdef} and \ref{lemma:lim_f_c=0}] Note that the function $ \theta \mapsto \tilde{f}_\theta(\infty)$ defined in \eqref{lim_f_tilde} coincides with the function $\theta \mapsto g(\theta)$ defined in Lemma \ref{L:tetdef}.

 First we show that  $\tilde{f}_\theta (\infty)$ is a decreasing function of $\theta \in [0,1)$.
We begin by observing that $\gamma_n(\theta)$ is an increasing function of $\theta \in [0,1)$ by the second formula for $\gamma_n(\theta)$ in \eqref{gamma_n_theta}.
Thus by the second formula for $\tilde{f}_\theta (\infty)$ in \eqref{lim_f_tilde} we obtain
  that  $\tilde{f}_\theta (\infty)$ is a decreasing function of $\theta \in [0,1) $.
The function $\theta \mapsto \tilde{f}_\theta (\infty)$ is also continuous on any compact sub-interval of $[0,1)$, since it
is the uniform limit of continuous functions.

 In order to complete the proof of Lemmas \ref{L:tetdef} and \ref{lemma:lim_f_c=0}, we just have to show that $\tilde{f}_{1/2} (\infty)>0$ and $\tilde{f}_{1-\varepsilon} (\infty)<0$ for some $\varepsilon>0$. Indeed, $\tilde{f}_{1/2} (1)=\frac{3}{4}$ and
  \begin{equation}
  \tilde{f}_{1/2} (\infty) \stackrel{ \eqref{lim_f_tilde} }{=} 1-\left(\frac{1}{2} \right)^2  -\sum_{k=2}^\infty  \frac{2^{2-2k} \cdot (1-\frac{1}{4})}{\frac{2}{1+1/2} - 2^{1-k}} \geq
  \frac{3}{4}-\sum_{k=2}^\infty  \frac{2^{2-2k} \cdot (1-\frac{1}{4})}{\frac{2}{1+1/2} - 1/2}= \frac{9}{20} >0.
  \end{equation}
 On the other hand,  $\tilde{f}_1 (2)=-4/3$ by \eqref{gamma_n_theta} and \eqref{f_tilde_n}, moreover $\theta \mapsto \tilde{f}_\theta(2)$ is a continuous function on $[0,1]$, therefore $\tilde{f}_{1-\varepsilon}(2)<0$ for some $\varepsilon>0$, from which $\tilde{f}_{1-\varepsilon} (\infty)<0$ follows,
 since $\tilde{f}_\theta(\infty) \leq \tilde{f}_\theta(2)$ by \eqref{f_tilde_n} and \eqref{lim_f_tilde}. The proofs of Lemmas \ref{L:tetdef} and \ref{lemma:lim_f_c=0} are complete.
\end{Proof}

\subsection{\texorpdfstring{The $\theta \leq \theta^* $ case}{The theta leq theta* case}}\label{sec:theta<=theta*}

\paragraph{}The goal of this section is to prove Lemma \ref{lemma:theta<=theta*}.


\begin{lemma}[Lower bound on $f_{\theta,c}(n)$]\label{lemma:theta<theta*}
If $\theta \in \left(\frac 12, \theta^* \right]$ and $c \in \left(0, \frac{\theta \cdot (2\theta - 1)}{(1+\theta)^2}\right]$, then
\begin{equation}\label{f_1_strict_ineq}
f_{\theta,c}(1)- \tilde{f}_\theta(1) c > \frac{1}{1+\theta},
\end{equation}
\begin{equation}\label{induction_f_n_ineq}
f_{\theta,c}(n)-\tilde{f}_\theta(n) c \geq f_{\theta,c}(1)- \tilde{f}_\theta(1) c, \qquad n \geq 1.
\end{equation}

\end{lemma}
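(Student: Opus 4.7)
Define $\phi_n := f_{\theta,c}(n) - \tilde{f}_\theta(n)\,c$, so that the lemma reads $\phi_1 > 1/(1+\theta)$ and $\phi_n \geq \phi_1$ for $n\geq 1$. The plan is to derive a clean identity for $\phi_n-\phi_{n-1}$, establish (\ref{f_1_strict_ineq}) by an explicit algebraic manipulation of the $n=1$ recursion, and then deduce (\ref{induction_f_n_ineq}) by induction using the identity. Factoring \eqref{quadratic_eq_for_f_n_and_n_minus_1} gives
\begin{equation*}
f_{\theta,c}(n-1) - f_{\theta,c}(n) \;=\; \frac{c\,\theta^{2n-2}(1-\theta^2)}{f_{\theta,c}(n-1)+f_{\theta,c}(n)-\theta^{n-1}},
\end{equation*}
whose derivative at $c=0$ recovers the identity $\tilde{f}_\theta(n-1)-\tilde{f}_\theta(n) = \gamma_n(\theta)$ of Lemma \ref{lemma:f_tilde}. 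Writing $A_n := 2/(1+\theta)-\theta^{n-1}$ and $D_n := f_{\theta,c}(n-1)+f_{\theta,c}(n)-\theta^{n-1}$, subtraction yields
\begin{equation*}
\phi_n - \phi_{n-1} \;=\; c\,\theta^{2n-2}(1-\theta^2)\cdot\frac{f_{\theta,c}(n-1)+f_{\theta,c}(n) - 2/(1+\theta)}{A_n\,D_n};
\end{equation*}
since $A_n,D_n>0$ (using $\theta<1$, \eqref{f_theta_c_n_geq_ugly}, and $f_{\theta,c}(0) > 1/(1+\theta)$ for $c>0$), we see $\phi_n \geq \phi_{n-1}$ iff $f_{\theta,c}(n-1)+f_{\theta,c}(n)\geq 2/(1+\theta)$.

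For the base inequality (\ref{f_1_strict_ineq}), set $u := \sqrt{1+8c(1+\theta)^2}$, giving $f_{\theta,c}(0)=(1+u)/(2(1+\theta))$ and $(2f_{\theta,c}(1)-1)^2 = (u-\theta)^2/(1+\theta)^2 - 4c(1-\theta^2)$. The desired strict inequality becomes $(2f_{\theta,c}(1)-1)^2 > \bigl(\tfrac{1-\theta}{1+\theta}+2(1-\theta^2)c\bigr)^2$; after eliminating $c$ via $c=(u^2-1)/(8(1+\theta)^2)$ and simplifying the resulting polynomial in $u$ (whose $(u-1)^2$ factor encodes the vanishing at $c=0$ of both value and derivative), one obtains the factorisation
\begin{equation*}
\operatorname{sgn}\bigl(\phi_1-1/(1+\theta)\bigr) \;=\; \operatorname{sgn}\Bigl(\bigl[16\theta-(1-\theta)^2(u+1)^2\bigr](u-1)^2\Bigr).
\end{equation*}
For $c>0$ we have $u>1$, so the condition reduces to $(1-\theta)(u+1) < 4\sqrt{\theta}$. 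At the boundary $c=\theta(2\theta-1)/(1+\theta)^2$ one computes $u=4\theta-1$, and the condition becomes $4\theta(1-\theta) < 4\sqrt{\theta}$, i.e.\ $\sqrt{\theta}(1-\theta) < 1$, which holds trivially on $(0,1)$. Since $u$ is increasing in $c$, (\ref{f_1_strict_ineq}) holds throughout the given range of $c$.

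For (\ref{induction_f_n_ineq}), induct on $n\geq 1$, the case $n=1$ being trivial. Set $\psi_k := f_{\theta,c}(k)-1/(1+\theta)$ and assume $\phi_{n-1} \geq \phi_1 > 1/(1+\theta)$, so that $\psi_{n-1} > \tilde{f}_\theta(n-1)\,c$. By the identity above it suffices to show $\psi_{n-1}+\psi_n \geq 0$. The recursion \eqref{fn_with_c} rewrites as $2\psi_n + A_n = \sqrt{(2\psi_{n-1}+A_n)^2 - 4c\theta^{2n-2}(1-\theta^2)}$, and a brief case analysis (trivial when $\psi_{n-1}\geq A_n/2$; squaring the equivalent form otherwise) shows that $\psi_{n-1} \geq c\gamma_n(\theta)/2$ is sufficient. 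Since $\theta\leq\theta^*$, Lemma \ref{lemma:lim_f_c=0} gives $\tilde{f}_\theta(n)\geq\tilde{f}_\theta(\infty)\geq 0$, hence $\tilde{f}_\theta(n-1) = \tilde{f}_\theta(n)+\gamma_n \geq \gamma_n$, and therefore $\psi_{n-1} > \tilde{f}_\theta(n-1)\,c \geq \gamma_n c > c\gamma_n/2$, completing the induction.

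The main obstacle is the algebraic reduction in the base case: the intermediate expressions are cumbersome, and the clean factorisation $[16\theta-(1-\theta)^2(u+1)^2](u-1)^2$ emerges only after careful cancellations. Once in hand, this factorisation makes completely transparent exactly which values of $c$ are admissible, while the inductive step becomes routine given the identity and the non-negativity of $\tilde{f}_\theta$ in the regime $\theta\leq\theta^*$.
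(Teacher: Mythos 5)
Your proof is correct, and your base case reaches exactly the same terminal inequality as the paper (your condition $(1-\theta)(u+1) < 4\sqrt\theta$ is the paper's $\beta < \tfrac{4\sqrt\theta}{1-\theta}-2$ with $\beta = u-1$), though you get there by claiming a closed-form factorisation whereas the paper threads through a longer chain of substitutions. The one organisational difference worth noting is in the inductive step. You factor \eqref{quadratic_eq_for_f_n_and_n_minus_1} to obtain the clean identity
\begin{equation*}
\phi_n - \phi_{n-1} = c\,\theta^{2n-2}(1-\theta^2)\cdot\frac{\psi_{n-1}+\psi_n}{A_nD_n},
\end{equation*}
which shows that $\phi_n-\phi_{n-1}$ has the sign of $\psi_{n-1}+\psi_n$, so you actually establish the stronger claim that $n\mapsto\phi_n$ is non-decreasing on $n\geq 1$. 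The paper instead proves only $\phi_n\geq\phi_1$ directly, by setting $q:=\tilde f_\theta(n-1)c+\phi_1$, invoking monotonicity of the recursion map $x\mapsto\tfrac12\bigl(\theta^{n-1}+\sqrt{(2x-\theta^{n-1})^2-4c\theta^{2n-2}(1-\theta^2)}\bigr)$ in $x$, plugging in $x=q$, and reducing to $2(q-\tfrac{1}{1+\theta})\geq\gamma_n(\theta)c$. The two final numeric checks are cousins ($\psi_{n-1}\geq c\gamma_n/2$ versus $q-\tfrac{1}{1+\theta}\geq\tfrac12\gamma_n c$), and both rest in the same way on $\tilde f_\theta(n)\geq 0$ for $\theta\leq\theta^*$. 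Your route is arguably more transparent in that the factored identity makes the monotonicity in $n$ visible at a glance, but the paper's approach avoids having to verify $A_n,D_n>0$ separately. Both are complete proofs.

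Two small points worth spelling out if you write this up: (a) the positivity $D_n>0$ you appeal to requires a one-line argument from \eqref{f_theta_c_n_geq_ugly} (for $n\geq2$) and from $f_{\theta,c}(0)>\tfrac{1}{1+\theta}$ (for $n=1$); (b) when squaring to obtain your factorisation for the base case, you should note that both $2f_{\theta,c}(1)-1>0$ (by \eqref{f_theta_c_n_geq_ugly}) and $\tfrac{1-\theta}{1+\theta}+2(1-\theta^2)c>0$, so the squaring step is an equivalence.
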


Before we prove Lemma \ref{lemma:theta<theta*}, let us deduce Lemma \ref{lemma:theta<=theta*} from it.

\begin{Proof}[of Lemma \ref{lemma:theta<=theta*}]
 By Lemma \ref{lemma:lim_f_c=0} we have $\lim_{n \to \infty} \tilde{f}_\theta(n)= \tilde{f}_\theta(\infty) \geq 0$ for any $\theta \leq \theta^*$, where $\tilde{f}_\theta(\infty)$ is defined in Definition \ref{def:f_tilde}. Thus
 \begin{equation}
 \lim_{n \to \infty} f_{\theta,c}(n)  \geq
   \lim_{n \to \infty}\left( f_{\theta,c}(n)- \tilde{f}_\theta(n) c \right)
  \stackrel{\eqref{induction_f_n_ineq}  }{\geq}
 f_{\theta,c}(1)- \tilde{f}_\theta(1) c\stackrel{ \eqref{f_1_strict_ineq} }{>}\frac{1}{1+\theta}
 \end{equation}
 holds for any $\theta \in \left(\frac 12, \theta^* \right]$ and $c \in \left(0, \frac{\theta \cdot (2\theta - 1)}{(1+\theta)^2}\right]$. The proof of Lemma \ref{lemma:theta<=theta*} is complete.
\end{Proof}

\begin{remark} We will prove \eqref{induction_f_n_ineq} by induction on $n$.
 We have to start the induction from $n=1$, since it can be easily seen that the analogue of \eqref{f_1_strict_ineq} does not hold in the $n=0$ case, i.e., we have
$f_{\theta,c}(0)- \tilde{f}_\theta(0) c < \frac{1}{1+\theta}$.
\end{remark}

\begin{Proof}[of \eqref{f_1_strict_ineq}.] By \eqref{gamma_n_theta} and \eqref{f_tilde_n} we have $\tilde{f}_\theta(1)=1-\theta^2$, so by \eqref{fn_with_c} we need to show
$\frac{1}{2} \left( 1+\sqrt{(2 f_{\theta,c}(0)-1)^2 - 4c(1-\theta^2)  } \right) -(1-\theta^2)c > \frac{1}{1+\theta}$. Applying a series of equivalent transformations, we see that we need
\begin{equation} f_{\theta,c}(0) > \frac{1}{2}\left( 1+ \sqrt{ \frac{(1-\theta)^2}{(1+\theta)^2} +8(1-\theta)c + 4 (1-\theta^2)^2 c^2  }  \right).
\end{equation}
 Substituting the formula \eqref{f0_with_c}
for $f_{\theta,c}(0)$ into this, we obtain after some rearrangements that we need to show
\begin{equation*}
\sqrt{1+8(1+\theta)^2c}-\theta > \sqrt{ (1-\theta)^2 +8(1-\theta)(1+\theta)^2c + 4 (1-\theta^2)^2(1+\theta)^2 c^2  }.
\end{equation*}
Taking the square of both sides of this inequality, introducing the notation $\alpha=(1+\theta)^2 c$ and rearranging a bit, we obtain that we need to show that
$8 \theta \alpha > 2\theta (\sqrt{1+8\alpha}-1)+4(1-\theta)^2 \alpha^2$ holds. Introducing the notation $\beta=\sqrt{1+8\alpha}-1$, we may equivalently rewrite this and obtain that we need to show $ \beta < \frac{4 \sqrt{\theta}}{1-\theta}-2$. Using the definition of $\alpha$ and $\beta$,  our assumption $c \leq \frac{\theta \cdot (2\theta - 1)}{(1+\theta)^2}$ becomes $\beta \leq \sqrt{ (1-4\theta)^2 }-1$. Using that $\theta > \frac{1}{2}$ we see that we have $\beta \leq 4\theta -2$, so it is enough to show
$4\theta < \frac{4 \sqrt{\theta}}{1-\theta} $ to conclude the desired inequality $ \beta < \frac{4 \sqrt{\theta}}{1-\theta}-2$. Now $\theta < \frac{ \sqrt{\theta}}{1-\theta} $ does hold for all $\theta \in (0,1)$ (therefore it holds for $\theta \in \left(\frac 12, \theta^* \right]$), completing the proof of \eqref{f_1_strict_ineq}.
\end{Proof}

\begin{Proof}[of \eqref{induction_f_n_ineq}.]
We prove \eqref{induction_f_n_ineq} by induction on $n$. The $n=1$ case trivially holds.  Let $n \geq 2$. Let us denote $q=\tilde{f}_\theta(n-1) c+ f_{\theta,c}(1)- \tilde{f}_\theta(1) c$. By our induction hypothesis we know that $f_{\theta,c}(n-1)\geq q$ holds, and we want to show that \eqref{induction_f_n_ineq} also holds, or, equivalently, we want
 $f_{\theta,c}(n)\geq q -\gamma_n(\theta)c $ to hold
(c.f.\ \eqref{gamma_n_theta}, \eqref{f_tilde_n}). Let us note that we have
\begin{equation}\label{q_gamma_theta_ineq}
q -\gamma_n(\theta)c \stackrel{\eqref{f_tilde_n}, \eqref{f_1_strict_ineq}}{\geq} \tilde{f}_\theta(n) c +\frac{1}{1+\theta} \stackrel{(*)}{\geq} \frac{1}{1+\theta},
\end{equation}
where $(*)$ holds since our assumption $\theta \leq \theta^*$ and  Lemma \ref{lemma:lim_f_c=0} together imply  $\tilde{f}_\theta(\infty)\geq 0$  and the formulas \eqref{gamma_n_theta}, \eqref{f_tilde_n} and
\eqref{lim_f_tilde} together imply $\tilde{f}_\theta(n)\geq \tilde{f}_\theta(\infty)$.
Using our induction hypothesis and  \eqref{fn_with_c}, we see that it is enough to prove
\begin{equation}\label{enough_to_prove_ind_q}
\frac{1}{2} \left( \theta^{n-1} +\sqrt{ (2q-\theta^{n-1})^2-4c\gamma_n(\theta)\left(\frac{2}{1+\theta}-\theta^{n-1} \right)  } \right) \geq q-\gamma_n(\theta)c
\end{equation}
in order to arrive at the desired $f_{\theta,c}(n)\geq q -\gamma_n(\theta)c $. We will now show \eqref{enough_to_prove_ind_q}.
 We first show that the expression under the square root is non-negative:
\begin{multline}\label{expr_under_sq_root_non-neg}
    (2q-\theta^{n-1})^2-4c\gamma_n(\theta)\left(\frac{2}{1+\theta}-\theta^{n-1} \right) \stackrel{ \eqref{q_gamma_theta_ineq} }{\geq}
     \left( \left(\frac{2}{1+\theta}-\theta^{n-1}\right) + 2c\gamma_n(\theta) \right)^2- \\ 4c\gamma_n(\theta)\left(\frac{2}{1+\theta}-\theta^{n-1} \right)=
      \left(\frac{2}{1+\theta}-\theta^{n-1}\right)^2 +\left(  2c\gamma_n(\theta) \right)^2 \geq 0.
 \end{multline}
Using this
 we can rearrange \eqref{enough_to_prove_ind_q} and see that it is equivalent to
\begin{equation}
  (2q-\theta^{n-1})^2-4c\gamma_n(\theta)\left(\frac{2}{1+\theta}-\theta^{n-1} \right) \geq \left( (2q-\theta^{n-1}) -2 \gamma_n(\theta)c  \right)^2,
\end{equation}
 which is in turn equivalent to
  $2\left(q-\frac{1}{1+\theta}\right) \geq \gamma_n(\theta)c $, and
this inequality indeed holds by \eqref{q_gamma_theta_ineq}. The proof of the induction step is complete.
\end{Proof}

The proof of Lemma \ref{lemma:theta<theta*} is complete.

\begin{remark} Our assumption
$c \in \left(0, \frac{\theta \cdot (2\theta - 1)}{(1+\theta)^2}\right]$ that appears in the statement of Lemma \ref{lemma:theta<theta*} (or something similar to it) seems
indispensable, because numerical simulations suggest that the conclusions of Lemma \ref{lemma:theta<theta*} do not hold for big values of $c$.
\end{remark}

\subsection{\texorpdfstring{The $\theta> \theta^*$ case}{The theta>theta* case}}\label{sec:theta>theta*}

\paragraph{}In this section we prove Lemma \ref{lemma:theta>theta*}. First we show Lemma \ref{lemma:f_lower_bound}, which implies that $f_{\theta, c}(\infty)$ is large if $c$ is large. We will also argue that $f_{\theta, c}(\infty)<\frac{1}{1+\theta}$ if
$\theta > \theta^*$ and $c$ is small.
We then combine these facts to show that there exists a $\hat{c} >0$ for which $ f_{\theta, \hat{c}}(\infty) = \frac{1}{1+\theta}$. After that we will see in Lemma \ref{lemma:conditions_fulfil} and \ref{lemma:every_alpha} that this $f_{\theta, \hat{c}}$ satisfies the conditions of Lemma \ref{lemma:f_conditions} (and therefore it is the signature of a non-diagonal solution $\hat{\rho}^{(2)} \in \mathcal{M}^{(2)}_{\theta}$ of the bivariate RDE \eqref{bivar_RDE}).

\begin{lemma}[Lower bound on $f_{\theta, c}$]\label{lemma:f_lower_bound}
If $\theta \in (0,1)$  and $c \ge 4$, then
\begin{equation}\label{lower_bound_on_f}
f_{\theta, c}(n) \ge \frac{\theta^n}{2} + \sqrt{ \left( \frac{1}{2} +\theta^{2n} \right) \cdot c}, \qquad n \in \mathbb{N}.
\end{equation}
\end{lemma}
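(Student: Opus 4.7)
The plan is to prove the bound by induction on $n$, exploiting the explicit recursive formulas for $f_{\theta,c}$ provided by Corollary \ref{corr_f_inc_dec}.

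\textbf{Inductive step.} Assume the claimed bound holds at level $n-1$. Subtracting $\theta^{n-1}$ from twice this bound gives $2f_{\theta,c}(n-1)-\theta^{n-1}\geq 2\sqrt{(\tfrac12+\theta^{2n-2})c}\geq 0$, so squaring is valid. The crucial algebraic identity is
\[
\left(\tfrac{1}{2}+\theta^{2n-2}\right) - \theta^{2n-2}(1-\theta^2) = \tfrac{1}{2}+\theta^{2n},
\]
which is exactly what turns the constant $\tfrac12+\theta^{2n-2}$ appearing under the square root at level $n-1$ into the constant $\tfrac12+\theta^{2n}$ required at level $n$ after the subtraction of $4c\theta^{2n-2}(1-\theta^2)$ in \eqref{fn_with_c}. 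Plugging this into \eqref{fn_with_c} yields
\[
f_{\theta,c}(n)\geq \tfrac{\theta^{n-1}}{2}+\sqrt{\left(\tfrac{1}{2}+\theta^{2n}\right)c}\geq \tfrac{\theta^{n}}{2}+\sqrt{\left(\tfrac{1}{2}+\theta^{2n}\right)c},
\]
which is in fact slightly stronger than the claimed bound (the leading term is $\tfrac{\theta^{n-1}}{2}$ instead of $\tfrac{\theta^n}{2}$). This is the reason for the specific form of the bound.

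\textbf{Base case.} Substituting the explicit formula \eqref{f0_with_c}, the desired inequality reduces, after multiplying by $2(1+\theta)$ and isolating the square root, to
\[
\sqrt{1+8c(1+\theta)^2}\;\geq\;\theta+(1+\theta)\sqrt{6c}.
\]
Both sides are non-negative, so squaring and dividing by $1+\theta$ gives the equivalent inequality $1-\theta+2c(1+\theta)\geq 2\theta\sqrt{6c}$. Since both sides are linear in $\theta$, it suffices to verify it at the endpoints $\theta=0$ and $\theta=1$ of the relevant interval. At $\theta=0$ it reads $1+2c\geq 0$, which is trivial. At $\theta=1$ it reads $2+4c\geq 2\sqrt{6c}$, i.e., $1+2c\geq\sqrt{6c}$; squaring again reduces this to $4c^{2}-2c+1\geq 0$, which holds for every $c\in\R$ since the discriminant $4-16$ is negative.

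\textbf{Main obstacle.} There is no substantial obstacle: the inductive step is forced by the recursion, and the base case is a straightforward algebraic verification. The hypothesis $c\geq 4$ is considerably stronger than needed — the argument above shows the bound holds for all $c\geq 0$ — so no delicate threshold analysis is required. The only mildly non-obvious step is recognising the identity $\tfrac12+\theta^{2n-2}-\theta^{2n-2}(1-\theta^2)=\tfrac12+\theta^{2n}$, which explains why the particular constants $\tfrac12+\theta^{2n}$ appear in the statement of the lemma in the first place.
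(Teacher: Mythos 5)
Your inductive step is correct and essentially identical to the paper's: starting from the bound at $n-1$, squaring, using the identity $\tfrac12+\theta^{2n-2}-\theta^{2n-2}(1-\theta^2)=\tfrac12+\theta^{2n}$ inside the square root of \eqref{fn_with_c}, and finally replacing $\theta^{n-1}$ by $\theta^n$ — this is exactly what the paper does. The base case, however, contains an arithmetic error that vitiates your claim about the hypothesis being unnecessary. After your (correct) reduction to
\[
(1-\theta)+2c(1+\theta)\;\geq\;2\theta\sqrt{6c},
\]
evaluating the left-hand side at $\theta=1$ gives $0+4c=4c$, \emph{not} $2+4c$. The correct endpoint inequality is therefore $4c\geq 2\sqrt{6c}$, equivalently $2c\geq\sqrt{6c}$, equivalently (for $c>0$) $c\geq 3/2$. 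In particular the base case does \emph{not} hold for all $c\geq 0$: indeed at $\theta$ near $1$ and $c=1/2$ one has $f_{\theta,1/2}(0)\approx (1+\sqrt{17})/4\approx 1.28$, which is strictly less than $\tfrac12+\sqrt{3/4}\approx 1.37$, so \eqref{lower_bound_on_f} fails at $n=0$. Thus some threshold on $c$ is genuinely required, and your statement that ``the argument above shows the bound holds for all $c\geq 0$'' is wrong. With the arithmetic corrected your linearity-in-$\theta$ argument does establish the base case for $c\geq 3/2$, which is a slightly better threshold than the paper's crude bound $f_{\theta,c}(0)\geq\tfrac14+\sqrt{2c}$ (needing $c\geq 4$), so the overall proof of the lemma as stated still goes through — but the overreach claim should be deleted.
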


\begin{Proof}
We prove \eqref{lower_bound_on_f} by induction on $n$. The $n=0$ case holds, since
\begin{equation}
f_{\theta, c}(0) \stackrel{\eqref{f0_with_c}}{\geq } \frac{1+\sqrt{8c(1+\theta)^2}}{2(1+\theta)} \geq \frac{1}{4} + \sqrt{2 c}
\stackrel{(*)}{\geq}
\frac{1}{2} + \sqrt{ \frac{3}{2}  c} =\frac{\theta^0}{2} + \sqrt{ \left( \frac{1}{2} +\theta^{2\cdot 0} \right) \cdot c} ,
\end{equation}
where $(*)$ holds if $c \geq 4$. Now assume that $n \geq 1$ and \eqref{lower_bound_on_f} holds for $n-1$, and we want to deduce that \eqref{lower_bound_on_f}
also holds with $n$ as well:
\begin{multline*}
f_{\theta, c}(n) \stackrel{\eqref{fn_with_c}}{=}
 \frac{\theta^{n-1} + \sqrt{(2f_{\theta, c}(n-1) - \theta^{n-1})^2 - 4c \cdot \theta^{2n-2} \cdot (1-\theta^2)}}{2}
\stackrel{(**)}{\geq} \\
\frac{\theta^{n} + \sqrt{  4 (\frac{1}{2} + \theta^{2(n-1)} ) c    - 4c \cdot \theta^{2n-2} \cdot (1-\theta^2)}}{2} =
 \frac{\theta^n}{2} + \sqrt{ \left( \frac{1}{2} +\theta^{2n} \right) \cdot c},
\end{multline*}
where in $(**)$ we used the induction hypothesis and also that $\theta^{n-1}\geq \theta^n$.
\end{Proof}

\begin{lemma}[$f_{\theta, c}$ satisfies necessary conditions]\label{lemma:conditions_fulfil}
If $\theta \in (0,1)$ and $c \ge 0$ are arbitrary, then $f_{\theta, c}$ satisfies conditions (iii), (iv) and (v) of Lemma \ref{lemma:f_conditions}, i.e.
\begin{enumerate}[1.]
\item $f_{\theta, c}(n)$ is non-increasing in $n$,
\item $(1+\theta) \cdot f_{\theta, c}(0) \le 2 f_{\theta, c}(1)$,
\item $(1+\theta) \cdot f_{\theta, c}(n) \le \theta \cdot f_{\theta, c}(n-1) + f_{\theta, c}(n+1)$ for every $n \geq 1$.
\end{enumerate}
\end{lemma}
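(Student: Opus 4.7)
The plan is to verify the three conditions in turn. Condition 1 is exactly the statement \eqref{f_decreasing} of Corollary \ref{corr_f_inc_dec}, so nothing is left to prove.

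For condition 2, the approach is a direct computation. Using the explicit formula \eqref{f0_with_c}, one checks that $(1+\theta) f_{\theta,c}(0) - 1 = \tfrac{1}{2}\left(\sqrt{1+8c(1+\theta)^2} - 1\right) \geq 0$, so the desired inequality $(1+\theta) f_{\theta,c}(0) \leq 2 f_{\theta,c}(1)$ may be squared after moving a $1$ to the left-hand side and using \eqref{fn_with_c} for $f_{\theta,c}(1)$. The resulting polynomial inequality in $f_{\theta,c}(0)$ and $c$, after elimination of $f_{\theta,c}(0)^2$ by means of the identity $f_{\theta,c}(0)^2 = f_{\theta,c}(0)/(1+\theta) + 2c$ from \eqref{quadratic_eq_for_f_0}, simplifies to the manifestly true $(1-\theta)\left[f_{\theta,c}(0)/(1+\theta) + 2c\right] \geq 0$.

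Condition 3 is the main step. The idea is to substitute
\[
q_n := \frac{2 f_{\theta,c}(n)}{\theta^n} = 2 g_{\theta,c}(n), \qquad A_n := q_{n-1} - \theta q_n, \qquad B_n := q_{n-1} + \theta q_n - 2,
\]
which turns the quadratic recursion \eqref{quadratic_eq_for_f_n_and_n_minus_1} into the product identity $A_n B_n = 4c(1-\theta^2)$ for all $n \geq 1$. A short calculation shows that condition 3, i.e.\ $(1+\theta) f_{\theta,c}(n) \leq \theta f_{\theta,c}(n-1) + f_{\theta,c}(n+1)$, is equivalent to $A_{n+1} \leq A_n$. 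The case $c = 0$ yields $f_{\theta,c}(n) \equiv 1/(1+\theta)$ and condition 3 holds with equality. For $c > 0$ the recursion forces strict monotonicity $f_{\theta,c}(n-1) > f_{\theta,c}(n)$ (so $A_n > 0$), while $g_{\theta,c}(n) \geq g_{\theta,c}(0) > 1/(1+\theta)$ gives $B_n > 0$. From $A_n B_n = A_{n+1} B_{n+1}$ it then suffices to prove $B_{n+1} \geq B_n$, which is exactly $(q_n - q_{n-1}) + \theta(q_{n+1} - q_n) \geq 0$, and this is immediate from the monotonicity \eqref{g_increasing} of $g_{\theta,c}$.

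The only nontrivial ingredient is the choice of substitution $q_n = 2 g_{\theta,c}(n)$: it converts condition 3 into a direct consequence of the already-established monotonicity of the $g$-sequence. Once that substitution is in place, the remaining verifications are routine algebra.
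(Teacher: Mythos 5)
Your proof is correct, and for conditions 2 and 3 it takes a genuinely different route from the paper. For condition~2, the paper argues via the auxiliary map $\psi_{\theta,c}$: it shows $x\mapsto 2\theta\psi_{\theta,c}(x)$ is increasing and concave with slope exceeding $2$, locates the unique solution $\tilde x$ of $(1+\theta)x = 2\theta\psi_{\theta,c}(x)$ when it exists, and compares $\tilde x$ with $g_{\theta,c}(0)$ by a comparison of the inverse (quadratic) functions. You instead square the inequality after subtracting $1$ (justified because $(1+\theta)f_{\theta,c}(0)-1\ge 0$), eliminate $f_{\theta,c}(0)^2$ by the identity \eqref{quadratic_eq_for_f_0}, and reduce to the manifest $(1-\theta)[f_{\theta,c}(0)/(1+\theta)+2c]\ge 0$; I checked the algebra and it closes. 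For condition~3, the paper differentiates $\varphi_{\theta,c}(x)=x-\theta\psi_{\theta,c}(x)$ and observes $\varphi'<0$ on $\mathcal D_{\theta,c}$, then combines with the monotonicity \eqref{g_increasing}. You instead set $q_n=2g_{\theta,c}(n)$ and factor the quadratic recursion \eqref{quadratic_eq_for_f_n_and_n_minus_1} as $A_nB_n=4c(1-\theta^2)$ with $A_n=q_{n-1}-\theta q_n$, $B_n=q_{n-1}+\theta q_n-2$, correctly reduce condition~3 to $A_{n+1}\le A_n$, and deduce it from $B_{n+1}\ge B_n$ (which is \eqref{g_increasing}) together with the positivity of $A_n,B_n$ for $c>0$ and the degenerate $c=0$ case. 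This algebraic factorization is a nice alternative: it avoids all calculus and the analysis of $\psi_{\theta,c}$ used by the paper, at the cost of treating $c=0$ separately and verifying the two sign conditions (which you do correctly: $A_n>0$ because the recursion forces strict decrease for $c>0$, and $B_n>0$ because $g_{\theta,c}(0)>1/(1+\theta)$ for $c>0$).
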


\begin{Proof}
 $ $

\begin{enumerate}[1.]
\item We have already seen this in Corollary \ref{corr_f_inc_dec}.

\item Recalling the notation introduced at the beginning of Section \ref{sec:f_unique} (see in particular \eqref{g_recursion} and \eqref{def_eq_g}), we want to show
\begin{equation}\label{g_measure_requirement}
 (1+\theta)g_{\theta,c}(0) \leq 2 \theta \psi_{\theta,c}\big(g_{\theta,c}(0)\big).
\end{equation}
Using the definition  \eqref{def_eq__psi_theta_c} of $\psi_{\theta,c}$ and $\mathcal{D}_{\theta,c}$ one deduces that
\begin{align}
\label{psi_incr_concave} \text{the function $x \mapsto 2 \theta \psi_{\theta,c}(x)$ is increasing and concave on $\mathcal{D}_{\theta,c}$,}\\
\label{psi_slope_big}  \frac{\mathrm{d}}{\mathrm{d}x} 2 \theta \psi_{\theta,c}(x) = 2 \frac{2x-1}{\sqrt{(2x-1)^2 -4c\cdot(1-\theta^2) }} > 2 >1+\theta , \qquad x \in \mathcal{D}_{\theta,c},\\
\label{psi_bigger_than_that_line_for_large_x} \lim_{x \to \infty} 2 \theta \psi_{\theta,c}(x) - (1+\theta)x = +\infty.
\end{align}
It follows from \eqref{psi_incr_concave} and \eqref{psi_slope_big} that
\begin{equation}\label{atmost_one_solutionn}
\text{ the equation $2 \theta \psi_{\theta,c}(x)=(1+\theta)x$ has at most one solution in $\mathcal{D}_{\theta,c}$.}
\end{equation}
 Let $y_0:=\sqrt{(1-\theta^2)c}+1/2$ denote the left endpoint of $\mathcal{D}_{\theta, c}$.
  One easily checks that $ 2\theta \psi_{\theta,c}(y_0)\geq (1+\theta) y_0$ holds if and only if $c \leq \frac{1-\theta}{4(1+\theta)^3}$ holds. We will prove \eqref{g_measure_requirement} by treating the cases $ 2 \theta \psi_{\theta,c}(y_0) \geq (1+\theta) y_0$ and $ 2 \theta \psi_{\theta,c}(y_0)< (1+\theta)y_0$ separately.

If $ 2 \theta \psi_{\theta,c}(y_0) \geq (1+\theta) y_0$ then $  2 \theta \psi_{\theta,c}(x) \geq (1+\theta)x $  for every $x \in \mathcal{D}_{\theta,c}$ follows from \eqref{psi_slope_big}, and in particular \eqref{g_measure_requirement} holds.

If $ 2 \theta \psi_{\theta,c}(y_0)< (1+\theta)y_0$ then this inequality, \eqref{psi_bigger_than_that_line_for_large_x} and \eqref{atmost_one_solutionn} together imply that
 there exists a unique $\tilde{x} \in \mathcal{D}_{\theta,c}$ such that
  $2\theta \psi_{\theta,c}(\tilde{x})=(1+\theta)\tilde{x}$, moreover we obtain using \eqref{psi_slope_big} that $ \tilde{x} \leq x$ implies   $  2\theta \psi_{\theta,c}(x) \geq (1+\theta)x $.
   One easily finds that $\tilde{x}=\frac{1+\sqrt{4(\theta+3)(\theta+1)c+1  }}{\theta+3}$, thus we only need to check $\tilde{x} \leq g_{\theta,c}(0)$, i.e., by the definition \eqref{g_recursion} of $g_{\theta,c}(0)$  we need to check that $\alpha_\theta(c)\leq \beta_\theta(c)$ holds for all $c \geq 0$, where
\begin{equation}
\alpha_\theta(c):= \frac{1+\sqrt{4(\theta+3)(\theta+1)c+1  }}{\theta+3}, \qquad \beta_\theta(c):= \frac{1+\sqrt{1+8(1+\theta)^2 c} }{2(\theta+1)}.
\end{equation}
The inverse functions of both  $ c \mapsto \alpha_\theta(c)$ and $ c \mapsto \beta_\theta(c)$ are quadratic polynomials:
\begin{equation} \alpha^{-1}_\theta(y)=\frac{((\theta+3)y-1 )^2-1}{4(\theta+3)(\theta+1)}, \qquad \beta^{-1}_\theta(y)=\frac{ (2(\theta+1)y-1)^2-1 }{8(\theta+1)^2}.
\end{equation}
 It is enough to check that $\alpha^{-1}_\theta(y)\geq \beta^{-1}_\theta(y)$ holds for all $y \in \mathbb{R}$, and indeed we have $\alpha^{-1}_\theta(y)-\beta^{-1}_\theta(y)=\frac{(1-\theta)y^2}{4(\theta+1)}$, which is nonnegative for all $\theta \in (0,1], y \in \mathbb{R}$.

\item We have to show $f_{\theta, c}(n) - f_{\theta, c}(n+1) \le \theta \cdot (f_{\theta, c}(n-1) - f_{\theta, c}(n))$
for every $n \ge 1$. Rewriting this using the notation introduced in Section \ref{sec:f_unique} as well as \eqref{def_eq_g}, we need to show that the inequality
 $g_{\theta,c}(n)-\theta g_{\theta,c}(n+1)\leq g_{\theta,c}(n-1)-\theta g_{\theta,c}(n)$ holds. Since $g_{\theta,c}(n+1)=\psi_{\theta,c}(g_{\theta,c}(n))$ and
  $g_{\theta,c}(n)=\psi_{\theta,c}(g_{\theta,c}(n-1))$ by \eqref{g_recursion}, moreover we know  $g_{\theta,c}(n)\geq g_{\theta,c}(n-1)$ (c.f.\ \eqref{g_increasing}),  it is enough
  to show that $\varphi_{\theta,c}(x)$ is a decreasing function of $x$, where $\varphi_{\theta,c}(x):=x-\theta \psi_{\theta,c}(x)$. This is indeed the case, since
  we have $\varphi'_{\theta,c}(x)=1-\frac{2x-1}{\sqrt{(2x-1)^2 -4c(1-\theta^2) }}<0$ for every $x  $ in the domain $\mathcal{D}_{\theta,c}$ of $\varphi_{\theta,c}(\cdot)$.
\end{enumerate}
\end{Proof}

\begin{lemma}[Upper bound on $f_{\theta, \hat{c}}(0)$]\label{lemma:every_alpha}
If $\theta \in (0,1)$ and $f_{\theta, \hat{c}}(\infty) = \frac{1}{1+\theta}$, then $f_{\theta, \hat{c}}(0) \le 1$.
\end{lemma}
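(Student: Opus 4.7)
The plan is to derive the conclusion as an essentially immediate consequence of Lemma~\ref{lemma_properties_of_f_theta_c}, which has already been proved earlier in the same subsection. The hypothesis $f_{\theta,\hat c}(\infty)=\frac{1}{1+\theta}$ is exactly the standing assumption of Lemma~\ref{lemma_properties_of_f_theta_c}, so all four conclusions (\ref{f_theta_c_sum_eq})--(\ref{c_leq_bbbound}) are at our disposal; only the bound (\ref{f_theta_c_0_in_interval}) will be needed here.

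From (\ref{f_theta_c_0_in_interval}) applied to $c=\hat c$ we immediately obtain
\begin{equation*}
f_{\theta,\hat c}(0)\;\leq\;\frac{1}{1+\theta}\vee\frac{2\theta}{1+\theta}.
\end{equation*}
It therefore suffices to check that both numbers on the right-hand side are at most $1$ for $\theta\in(0,1)$. This is a one-line verification: $\frac{1}{1+\theta}\leq 1$ holds whenever $\theta\geq 0$, and $\frac{2\theta}{1+\theta}\leq 1$ is equivalent to $\theta\leq 1$. Consequently $f_{\theta,\hat c}(0)\leq 1$, as claimed. (In fact both inequalities are strict for $\theta\in(0,1)$, so we even get $f_{\theta,\hat c}(0)<1$, but only the non-strict version is needed to verify condition~(i) of Lemma~\ref{lemma:f_conditions}.)

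There is no real obstacle here beyond invoking the previously established bound; the only thing worth double-checking while writing it up is that the hypothesis of Lemma~\ref{lemma_properties_of_f_theta_c} has been met, which is automatic since its only standing assumption is precisely $\lim_{n\to\infty}f_{\theta,\hat c}(n)=\frac{1}{1+\theta}$.
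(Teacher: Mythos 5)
Your proof is correct and is essentially identical to the paper's: both invoke Lemma~\ref{lemma_properties_of_f_theta_c} under the hypothesis $f_{\theta,\hat c}(\infty)=\frac{1}{1+\theta}$, apply the bound~(\ref{f_theta_c_0_in_interval}), and observe that $\frac{1}{1+\theta}\vee\frac{2\theta}{1+\theta}\leq 1$ for $\theta\in(0,1)$.
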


\begin{Proof} The conditions of Lemma \ref{lemma_properties_of_f_theta_c} are fulfilled for $f_{\theta, \hat{c}}$, thus we may use
\eqref{f_theta_c_0_in_interval} to conclude $f_{\theta, \hat{c}}(0) \leq \frac{1}{1+\theta} \vee \frac{2 \theta}{1+\theta} \leq 1$.
\end{Proof}

\begin{Proof}[of Lemma \ref{lemma:theta>theta*}]
We will show that the function $c \mapsto  f_{\theta, c}(\infty) -\frac{1}{1+\theta} $ takes both positive and negative values. This is enough to conclude the proof of the first statement of Lemma \ref{lemma:theta>theta*},  since this function is continuous by Lemma \ref{lemma:f_continuous}.

We know from \eqref{f_theta_null} that $ f_{\theta, 0}(n) = \frac{1}{1+\theta}$ for all $n \in \mathbb{N}$.
 By the $\theta > \theta^*$ case of Lemma \ref{lemma:lim_f_c=0} we have $\tilde{f}_\theta(\infty) = \lim \limits_{n \to \infty} \tilde{f}_\theta (n) < 0$, therefore we can fix an
 $n \in \mathbb{N}$ such that $\tilde{f}_\theta (n) < 0$.
  Recall from Definition \ref{def:f_tilde} that $\tilde{f}_\theta (n)$ denotes $ \frac{\partial}{\partial c} f_{\theta, c}(n)  \big|_{c=0_+}$.
  We can thus fix a small but positive value of $c$ such that $f_{\theta, c}(n) < f_{\theta, 0}(n)  =\frac{1}{1+\theta}$.  Now $f_{\theta, c}(\infty) < \frac{1}{1+\theta} $ follows from the fact that $f_{\theta, c}(n)$ decreases as $n$ increases (c.f.\ \eqref{f_decreasing}).

    Next we show that there exists a $c > 0$ for which $f_{\theta, c}(\infty) > \frac{1}{1+\theta}$. This follows from Lemma \ref{lemma:f_lower_bound}, since for $c \ge 4$ we have
\begin{equation}
\lim \limits_{n \to \infty} f_{\theta, c}(n) \stackrel{\eqref{lower_bound_on_f}}{\ge} \lim \limits_{n \to \infty} \left(
\frac{\theta^n}{2} + \sqrt{ \left( \frac{1}{2} +\theta^{2n} \right) \cdot c} \right) = \sqrt{\frac{1}{2}c}  > \frac{1}{1+\theta}.
\end{equation}

Therefore there exists $\hat{c} > 0$ such that $f_{\theta, \hat{c}}(\infty) = \frac{1}{1+\theta}$.

Now we prove the second statement of Lemma \ref{lemma:theta>theta*}. Since $ f_{\theta, \hat{c}}(\infty) = \frac{1}{1+\theta}$, condition (ii) of Lemma \ref{lemma:f_conditions} holds and condition (i) also holds by Lemma \ref{lemma:every_alpha}. By Lemma \ref{lemma:conditions_fulfil} we also know that conditions (iii), (iv) and (v) of Lemma \ref{lemma:f_conditions} are true. So we can conclude that $f_{\theta, \hat{c}}$ satisfies all of the conditions of Lemma \ref{lemma:f_conditions}.
\end{Proof}

\begin{remark}\label{remark:conj_unique}
In Figure \ref{fig:85} we can see $ f_{0.85, c}(\infty)$ as a function of $c$, where $c$ is an element of the interval $c \in \left[0, \frac{0.85 \cdot (2 \cdot 0.85 - 1)}{(1+0.85)^2}\right]$. The horizontal red line is the constant $\frac{1}{1+\theta} \stackrel{\theta = 0.85}{=} \frac{20}{37}$. We see that first it is decreasing, then it is increasing and goes to infinity, thus there exists $\hat{c} > 0$ for which $ f_{0.85, \hat{c}}(\infty) = \frac{20}{37}$. We get a similar picture for every $\theta \in \left(\theta^*, 1\right)$.

We also note that Figure \ref{fig:85} suggests that Conjecture \ref{conj:unique} holds, since this conjecture is equivalent with the fact that there exists exactly one $\hat{c}>0$ for which $ f_{\theta, \hat{c}}(\infty) = \frac{1}{1+\theta}$.
\end{remark}

\begin{figure}[!ht]
	\centering
	\includegraphics[scale=0.31]{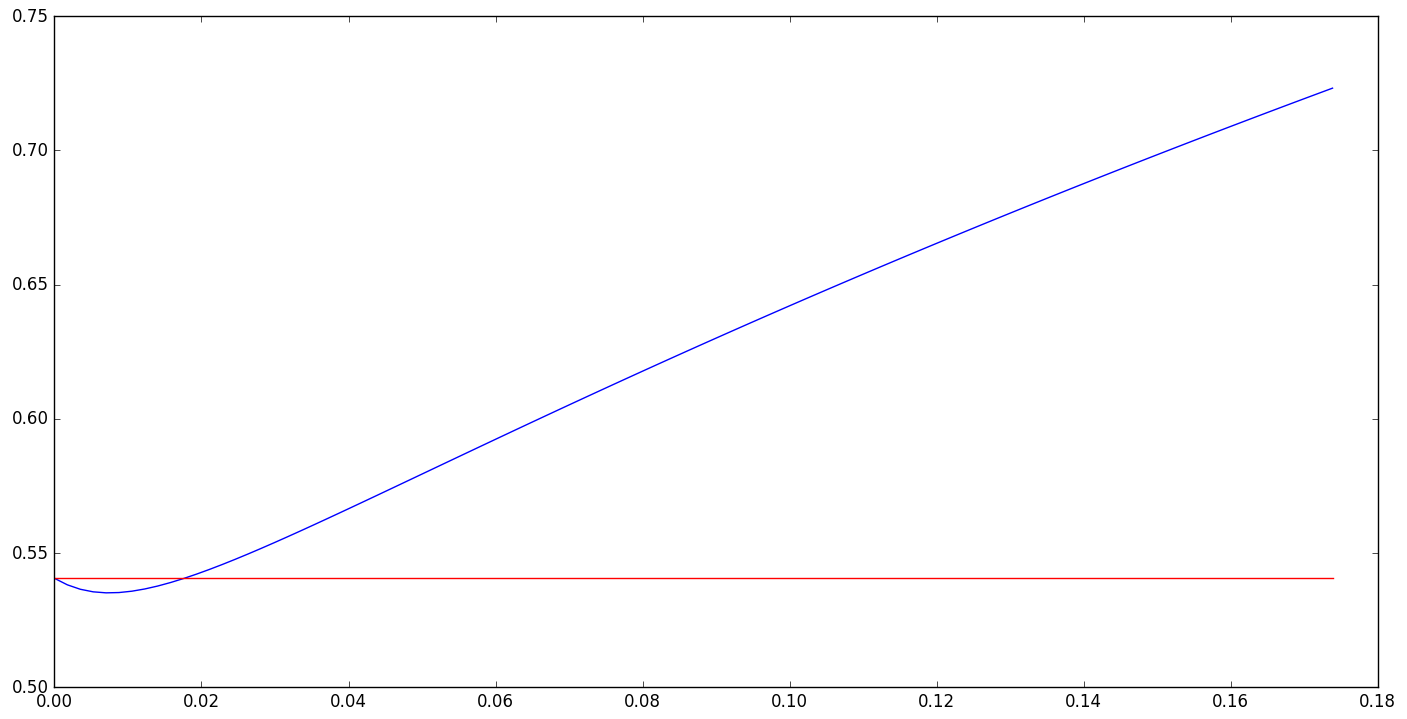}
	\caption{$ f_{0.85, c}(\infty)$}
	\label{fig:85}
\end{figure}

\end{document}